\documentclass[12pt, oneside, psamsfonts]{article}
\usepackage[T1]{fontenc}
\usepackage{lmodern}
\usepackage{amsmath,amssymb,amsthm,mathtools}
\usepackage{microtype,enumitem}
\usepackage[colorlinks=true,linkcolor=blue,citecolor=blue,urlcolor=blue]{hyperref}

\usepackage[scale=0.8]{geometry}

\newtheorem{theorem}{Theorem}[section]
\newtheorem{proposition}[theorem]{Proposition}
\newtheorem{lemma}[theorem]{Lemma}
\newtheorem{corollary}[theorem]{Corollary}
\theoremstyle{definition}
\newtheorem{definition}[theorem]{Definition}
\theoremstyle{remark}
\newtheorem{remark}[theorem]{Remark}

\newcommand{\mdim}{\operatorname{mdim}}
\newcommand{\rc}{\operatorname{rc}}
\newcommand{\supp}{\operatorname{supp}}
\newcommand{\rank}{\operatorname{rank}}
\newcommand{\car}{\operatorname{car}}
\newcommand{\Tr}{\operatorname{Tr}}
\newcommand{\Mu}{\mathcal M_\Gamma(X)}
\newcommand{\UU}{\mathcal U}
\newcommand{\DD}{\mathcal D}
\newcommand{\C}{\mathbb C}

\newcommand{\N}{\mathbb N}
\newcommand{\Z}{\mathbb Z}
\newcommand{\eps}{\varepsilon}
\newcommand{\bpre}{\precsim}

\newcommand{\kk}{\kappa}

\title{Mean dimension and radius of comparison}

\author{George A. Elliott and Zhuang Niu}

\date{October 7, 2026}

\begin{document}
\maketitle

\begin{abstract}
Consider a minimal and free topological dynamical system $(X, \Gamma)$, where $\Gamma$ is a discrete amenable group. Assume $(X, \Gamma)$ has the uniform Rokhlin property (URP) and the Cuntz comparison of open sets (COS), then it is shown that
$$\mathrm{rc}(\mathrm{C}(X) \rtimes \Gamma) = \frac{1}{2} \mathrm{mdim}(X, \Gamma).$$
The same statement also holds for unital simple AH algebras with diagonal maps.

The arguments in both cases are based on the recent preprint of OpenAI dealing with $\mathbb Z$-actions and the topological ideas used in that paper.
\end{abstract}

\tableofcontents

\section{Introduction and main results}
Mean dimension measures the complexity of propagated finite open covers.
For a diagonal AH presentation, propagation is along its eigenvalue maps; for
an amenable action, propagation is along partial orbits. It was proved
that the radius of comparison is at most half of the corresponding mean
dimension for simple diagonal AH algebras (\cite[Theorem 6.2]{NiuAH})
and for actions with the uniform Rokhlin property (URP) and Cuntz comparison
of open sets (COS) (\cite[Theorem 4.8]{Niu}). OpenAI's recent preprint
\cite{OAI-main} proves the reverse bound for minimal $\mathbb Z$-actions,
using the torus compression theorem of its companion preprint
\cite[Theorem 1.3]{OAI-top}.

We extend this reverse-bound to diagonal AH algebras and
amenable actions with URP. In the first setting, eigenvalue-map occurrences
provide the coordinates of the comparison construction. In the second case,
exact finite-shape tilings supply an allocation suited to arbitrary
amenable groups:

\begin{theorem}[Diagonal AH equality]\label{thm:ah-main}
Let $\mathcal P=(A_i,\phi_i)$ be a unital inductive system of finite
direct sums of homogeneous corners over connected compact metrizable
spaces, with ordinary globally diagonal maps as in
Definition~\ref{def:ah}. Suppose that $A=\varinjlim\mathcal P$ is simple.
For the mean dimension $\mathrm{mdim}(A_i, \phi_i)$ of
\cite[Definition 3.6]{NiuAH}, recalled in Definition~\ref{def:gamma},
\[
 \operatorname{rc}(A)=\frac12\mathrm{mdim}(A_i, \phi_i)
 \qquad\text{in }[0,\infty].
\]
\end{theorem}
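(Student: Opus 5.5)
The inequality $\operatorname{rc}(A)\le\frac12\mathrm{mdim}(A_i,\phi_i)$ is \cite[Theorem 6.2]{NiuAH}, so the plan is to prove only the reverse inequality $\operatorname{rc}(A)\ge\frac12\mathrm{mdim}(A_i,\phi_i)$. If the mean dimension is zero there is nothing to show, so assume $\mathrm{mdim}(A_i,\phi_i)>\gamma>0$. The goal is then to exhibit, for every $r<\gamma/2$, positive elements $a,b$ in some matrix algebra over $A$ such that $d_\tau(a)+r<d_\tau(b)$ for all $\tau\in T(A)$ but $a\not\precsim b$.

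\textbf{Step 1 (extracting dimension).} By Definition~\ref{def:gamma}, the hypothesis $\mathrm{mdim}>\gamma$ provides a stage $i$ and a finite open cover $\mathcal U$ of the base spaces of $A_i$ with the following property: for all large $j$, the pulled-back cover of $X_j$ (obtained through all eigenvalue maps of $\phi_{i,j}$) has order at least $\gamma\cdot(\text{multiplicity of }\phi_{i,j})$. I would convert this, in the usual way (Lebesgue number plus a nerve map), into a continuous map $F_j$ from $X_j$ into a product $\prod_{k}K$ of copies of a fixed finite polyhedron $K$. The coordinates of this product are indexed by the eigenvalue-map occurrences $\lambda_1,\dots,\lambda_{N_j}$, and $F_j$ is "essential" in dimension roughly $\gamma N_j$. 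Concretely, some cohomology class of degree at least $\gamma N_j$ pulls back nontrivially. To obtain a clean nonvanishing Chern/Euler class, I would compress this, as in \cite[Theorem 1.3]{OAI-top}, to a map into a torus $T^{2m}$ with $m\approx\gamma N_j/2$ whose top class pulls back nonzero.

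\textbf{Step 2 (the comparison witness).} Along the occurrences $\lambda_k$, I would place the tautological line bundle of a $\mathbb{CP}^1$-factor, or a Bott-type projection, built from the torus coordinates. Call the resulting projection $q\in M_n(A_j)$; its rank is about $m\cdot(\text{size})$. I would then take the nonvanishing top Chern class of the complementary bundle and show that the complement cannot embed into a trivial bundle of rank smaller than about $m$. This is the Dupont--Villadsen obstruction, stated through $e(\cdot)\ne0$. Setting $a=q$ (or its complement) and $b$ a trivial projection with normalized rank exceeding $\tau(a)+\gamma/2-\epsilon$, I get $a\not\precsim b$ in $A_j$.

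\textbf{Step 3 (persistence).} I must show that $a\not\precsim b$ survives in every later stage $A_l$, hence in $A$. Under a diagonal map $\phi_{j,l}$, the image of $a$ is a direct sum of pullbacks of $a$ along eigenvalue maps, and its Chern class is the product of the pulled-back classes. Nonvanishing of that product is exactly what the order of the cover at stage $l$ guarantees: the occurrences of $\phi_{i,l}$ factor through those of $\phi_{i,j}$, which is the reason for indexing the coordinates by occurrences. Simplicity gives $d_\tau$ estimates that are uniform in $\tau$, because tracial ranks of trivial projections are exact and the rank of $a$ is controlled. The gap $\gamma/2$ comes from real dimension $2m$ versus complex rank $m$.

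The main obstacle is Step 1 combined with Step 3. One must build the compressed torus map so that its top class remains nonzero after pulling back along all eigenvalue maps at all later stages, with rank loss at most $\epsilon$. Doing this requires a torus compression theorem adapted to multisets of maps rather than $\mathbb Z$-shifts. I would first try to apply \cite[Theorem 1.3]{OAI-top} coordinatewise, using the family of eigenvalue maps in place of iterates of the homeomorphism, and I would check that its proof uses only the combinatorics of the propagated cover.
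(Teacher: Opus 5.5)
\textbf{There is a genuine gap in Steps 1 and 3.} Both rest on a link between the order of the propagated covers and nonvanishing cohomology that does not hold in this generality, because covering order is not detected by absolute cohomology.

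Take all base spaces to be cubes $[0,1]^{k_j}$, with coordinate-projection eigenmaps and point evaluations added for simplicity. Lebesgue's covering theorem makes the orders of the propagated covers grow like $\dim X_j$, so for suitable growth the mean dimension is positive. Yet every map $X_j\to\mathbb T^{2m}$ is null-homotopic and every projection over $X_j$ is trivial. Hence any projection $q$ whose rank is below that of a trivial $b$ satisfies $q\precsim b$. So Step 1's nonzero pulled-back class does not exist, and Step 2 cannot be run with projections and absolute Chern/Euler classes. The obstruction has to be relative, carried by positive elements supported near a torus inside an open cube.

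Step 3 is likewise unsupported. Factoring the occurrences shows only that the propagated class is a product of pullbacks. Monotonicity of the cover rates (Lemma~\ref{lem:ah-cover-monotonicity}) comes from subadditivity, i.e.\ it gives only upper bounds. Nothing forces that product to be nonzero for arbitrary eigenmaps.

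You also read \cite[Theorem 1.3]{OAI-top} backwards. It does not produce a torus map with a nonzero top class from large dimension. Its hypothesis is a bundle monomorphism $\mathcal E_f\to\mathcal T(f,Z_{an})\times\C^q$ with $q<2pn$. Its conclusion is a boundary-preserving map $G:Y^u\to C_t^{un}$ with at least $(a/8)un$ boundary slots. It already applies to arbitrary compact metrizable $Y$ and arbitrary continuous $f$, so no version adapted to multisets of maps is needed.

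\textbf{How the paper avoids this.} It never proves that an obstruction is nonzero; it argues by contradiction. Suppose $r$ is a finite comparison bound and some cover has limiting rate $d>2r$.
\begin{enumerate}
\item Compare $P_0$, built from $\zeta\widetilde Q$ composed with the packet maps of colored nerves (Lemmas~\ref{lem:nerve} and~\ref{lem:packets}), with a trivial $P_1$ of rank $R_s=\lfloor(2-10a)pc\,n_s\rfloor$.
\item One finite-stage Cuntz witness (Lemma~\ref{lem:ah-lift}) yields a single monomorphism of $\mathcal E_{f_k}$, over the whole sparse locus, into a trivial bundle of rank $<2pN_k$ (Lemma~\ref{lem:ah-embedding}).
\item The compression theorem then supplies boundary slots. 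Packet reconstruction turns these into a carrier-dimension saving while preserving vertex supports.
\item Lemma~\ref{lem:ah-grouping} propagates the saving to every later block by grouping later eigenmaps into $u_k$-tuples. The bounded remainder cost $E$ is absorbed by rank growth.
\item This forces $d_l<d$, contradicting $d_l\downarrow d$.
\end{enumerate}
Persistence to later stages is therefore combinatorial, not a cup-product computation.

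Finally, the upper bound is not a bare citation here. The presentations allow noninjective connecting maps, so the paper re-verifies the hypotheses of \cite[Lemma 6.1]{NiuAH} (Lemma~\ref{lem:upper-bound}). It also treats finite-dimensional limits separately (Lemma~\ref{lem:finite-limit}).
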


\begin{theorem}[URP lower bound and URP/COS equality]
\label{thm:actions-main}
Let $(X, \Gamma)$ be a free and minimal dynamical system, where $\Gamma$ is a discrete amenable group, and $X$ is a compact metrizable space. If $(X, \Gamma)$ has URP, then
\[
 \operatorname{rc}(C(X)\rtimes\Gamma) \geq \frac{1}{2}\operatorname{mdim}(X,\Gamma).
\]
If $(X, \Gamma)$ also has COS, then
\[
 \operatorname{rc}(C(X)\rtimes\Gamma)
 =\frac12\operatorname{mdim}(X,\Gamma)
 \qquad\text{in }[0,\infty].
\]
\end{theorem}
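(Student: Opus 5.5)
The upper bound $\operatorname{rc}(C(X)\rtimes\Gamma)\le\frac12\operatorname{mdim}(X,\Gamma)$ under URP and COS is \cite[Theorem 4.8]{Niu}. So the equality follows once the lower bound is proved under URP alone, and that is the part I would work on.

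Fix $r<\frac12\operatorname{mdim}(X,\Gamma)$. The aim is to produce positive elements $a,b$ in a matrix algebra over $C(X)\rtimes\Gamma$ such that:
\begin{itemize}[nosep]
\item $d_\tau(a)+r<d_\tau(b)$ for every tracial state $\tau$;
\item $a\not\precsim b$.
\end{itemize}
By URP, $C(X)\rtimes\Gamma$ is tracially approximated by subhomogeneous pieces. Concretely, URP yields Rokhlin towers with Følner shapes $F_1,\dots,F_m$ and closed bases $B_1,\dots,B_m$, covering $X$ up to a set of uniformly small orbit capacity. I would refine these to \emph{exact finite-shape tilings}, so that every point of $X$ lies in a tile $F_jx$ with $x\in B_j$, apart from a small remainder. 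Traces on the crossed product correspond to invariant measures, so $d_\tau$ of an element supported on a tower is the $\mu$-measure of its open support. Hence the trace condition reduces to comparing the densities of open sets along tiles.

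The construction follows the strategy of \cite{OAI-main}, transplanted from the $\mathbb Z$-case to tiles.
\begin{enumerate}[nosep]
\item Use $\operatorname{mdim}>2r$ to choose a finite open cover $\mathcal U$ and a Følner shape $F$ with $\mathcal D(\mathcal U^F)>2r|F|$.
\item Build $b$ as the image of a map $X\to$ (a high-dimensional sphere or torus) whose fibres along each tile behave like the complement of a trivial bundle. Here $b$ should be a projection-like element of rank roughly $(d(a)+r)|F|$ per tile.
\item Take $a$ to be a rank-one-per-point element, such as a diagonal projection.
\item Show $a\not\precsim b$ by contradiction. A Cuntz subequivalence would, after restricting to a single tile $F_jx$ and to the orbit map $X\to X^{F_j}$, give a nowhere-vanishing section, that is, a map into a sphere avoiding a point. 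Any factorization through such a map forces the widths of the covers $\mathcal U^{F_j}$ to be at most about $2r|F_j|$. This contradicts the choice of $\mathcal U$ and $F$, because mean dimension can be computed along any Følner sequence, including the tile shapes.
\end{enumerate}

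The torus compression theorem \cite[Theorem 1.3]{OAI-top} is exactly what converts "widths of propagated covers exceed $2r|F|$" into "a certain vector bundle obstruction (an Euler/Chern class) is nonzero". I would apply it coordinatewise on each tile; the tiling gives disjoint coordinate blocks, which replace the intervals used in the $\mathbb Z$-case.

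The main obstacle I expect is step 4: passing from a Cuntz subequivalence in the crossed product to a genuine topological statement on $X^{F_j}$. Subequivalence involves elements $v$ with $v^*bv\approx a$, and these are not supported in a single tower level. I would first cut down with the URP tower projections, using the fact that their complement has small orbit capacity, hence trace-small. Then I would use that the tower algebra is $C(B_j)\otimes M_{|F_j|}$ up to boundary error, and apply a perturbation to make the subequivalence exact on a large closed subset. The small boundary region of each tile has to be absorbed by a dimension-counting slack of size $\varepsilon|F_j|$. Ensuring this slack is compatible with the nonvanishing of the characteristic class is the delicate point. It is where exact tilings, with no overlaps, are needed rather than merely quasi-tilings.
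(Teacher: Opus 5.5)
Your frame matches the paper's. The upper bound is \cite[Theorem 4.8]{Niu}, and the lower bound amounts to showing that no $r<\frac12\mdim(X,\Gamma)$ is a comparison bound, using tilings and the torus compression theorem. But the proposal does not supply the mechanism that makes this work, and the one it sketches points the wrong way.

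Step 4's claim that a nowhere-vanishing section ``forces the widths of the covers $\UU^{F_j}$ to be at most about $2r|F_j|$'' is the whole difficulty. $X$ is an arbitrary compact metrizable space, and a pulled-back Euler or Chern class may simply vanish on it. You never say which map into a sphere or torus defines $b$, or why its obstruction is tied to $\DD(\UU^F)$.

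Nor does Theorem~\ref{input:compression} certify that some characteristic class is nonzero. Its hypothesis is a monomorphism of $\mathcal E_f$ into a trivial bundle of rank $<2pn$ over the entire sparse-parameter locus $\mathcal T(f,Z_{an})$. Its conclusion is a boundary-preserving map $Y^u\to C_t^{un}$ with at least $(a/8)un$ boundary slots. To exploit it, the \emph{smaller} element must carry the nontrivial bundle and the larger one must be trivial, which is the reverse of your Steps 2--3.

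The paper proceeds as follows.
\begin{itemize}
\item It takes colored nerves of $\UU^{F_s}$ and groups $t+1=2p+2$ colors into packets mapped to $C_t$ (Lemma~\ref{lem:packets}).
\item $P_0$ is the pullback of the rank-$p$ torus bundle along the packet maps, cut down to the base $B_s$.
\item $P_1$ is trivial of rank $R_s=\lfloor(2-10a)pg_s\rfloor$ on $B_s$.
\end{itemize}
Comparison gives $P_0\precsim P_1$, hence the embedding. Compression then gives boundary slots, and packet reconstruction turns each boundary slot into a lost color. The result is a refinement of $\UU^{F_u}$ whose order is too small for the Ornstein--Weiss rate.

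Your localization to a single tile also fails, and not because of a boundary effect. Since $a$ and $b$ are built from the towers, the Fourier support $K$ of a Cuntz witness is known only after the shapes $F_j$ are fixed. The witness sends data over $F_jx$ into data over $KF_jx$, and $|KF_j|$ may be much larger than $|F_j|$. So there is no rank saving per tile, and no slack of size $\varepsilon|F_j|$ absorbs this. Moreover, the bases are not clopen in general, so there are no tower projections to cut down by. A subequivalence in $C(X)\rtimes\Gamma$ also does not descend to the tower subalgebra $C(B_j)\otimes M_{|F_j|}$.

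The paper avoids both problems.
\begin{itemize}
\item It restricts the finite-propagation witness, in the orbit representation, to a window $F$ chosen \emph{after} $K$ with $|(\bigcup_sF_s)KF|\leq(1+\delta_{\mathrm{prop}})|F|$.
\item A URP allocation (Proposition~\ref{prop:finite-shape-allocation}: macro tiles, masks, padded slots) transports the fixed primary packet maps into that window.
\item The outputs are compressed by a fixed linear map.
\item The compression theorem is applied once to the concatenated window map.
\item The exponent $u$ is handled by disjoint right translates $F_u$ and the Ornstein--Weiss lower bound.
\end{itemize}
These are the missing ingredients, not technical refinements of your Step 4.
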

Among other actions, all minimal and free $\mathbb Z^d$-actions have URP and COS (\cite{Niu-Zd}).

The new assertion in each theorem is the reverse bound. For every
finite comparison bound, we control the limiting rate of each finite
open cover. Taking the supremum over covers therefore also treats
infinite mean dimension. The AH equality identifies the mean dimension
of every presentation in the stated class with the same algebraic
invariant. In the dynamical equality, COS is used only for the 
upper bound.

Closely related results in this direction were obtained earlier by Hirshberg and Phillips in \cite{HP}. 

The following corollaries point out that a certain property of a given system is equivalent to a property of the associated C*-algebra. 


\begin{corollary}
Let $(A_i,\phi_i)$ be an AH system with diagonal maps. Let $A$ be the limit algebra, and let $D$ be its canonical diagonal subalgebra. Then the following properties are equivalent:
\begin{enumerate}
\item $A$ is $\mathcal Z$-absorbing,
\item $\mathrm{mdim}(A_i, \phi_i) = 0$.
\end{enumerate}
\end{corollary}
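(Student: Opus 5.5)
The corollary should follow from Theorem~\ref{thm:ah-main} once we connect $\mathcal Z$-absorption with vanishing radius of comparison. The plan is to prove that, for the simple unital limit algebra $A$ of a diagonal AH system,
\[
A\otimes\mathcal Z\cong A \iff \operatorname{rc}(A)=0,
\]
and then substitute $\operatorname{rc}(A)=\frac12\mathrm{mdim}(A_i,\phi_i)$. Simplicity is already an assumption of Theorem~\ref{thm:ah-main}, so it is implicit in the corollary as well. Throughout, we write $\mathrm{mdim}$ for $\mathrm{mdim}(A_i,\phi_i)$.

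\emph{Direction (1)$\Rightarrow$(2).} This direction is general. A simple, unital, stably finite, $\mathcal Z$-stable C*-algebra has strict comparison of positive elements by traces; this is Rørdam's theorem, which says that $\mathcal Z$-stability forces the Cuntz semigroup to be almost unperforated. The limit $A$ is stably finite, since it is an inductive limit of homogeneous algebras and therefore carries a faithful tracial state. Strict comparison gives $\operatorname{rc}(A)=0$, and Theorem~\ref{thm:ah-main} then gives $\mathrm{mdim}=2\operatorname{rc}(A)=0$.

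\emph{Direction (2)$\Rightarrow$(1).} This direction needs more than the equality. Suppose $\mathrm{mdim}=0$. Theorem~\ref{thm:ah-main} gives $\operatorname{rc}(A)=0$, so $A$ has strict comparison. Simple unital AH algebras, and more generally simple unital ASH algebras, are nuclear and have finite nuclear dimension locally on subhomogeneous building blocks. The plan is to pass from strict comparison to $\mathcal Z$-stability by one of two routes:
\begin{enumerate}[label=(\roman*)]
\item invoke the known results for simple AH algebras with diagonal maps, or simple ASH algebras, where strict comparison is enough; or
\item invoke the Toms--Winter conjecture in the form proved for algebras with finitely many extreme traces or with tracial boundary conditions, combined with the Castillejos--Evington--Tikuisis--White--Winter result that strict comparison together with uniform property $\Gamma$ implies $\mathcal Z$-stability.
\end{enumerate}

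The cleanest route I expect is to verify that $A$ has uniform property $\Gamma$, or at least the "small boundary" type hypothesis, when $\mathrm{mdim}=0$. Zero mean dimension of the system should give a tracially small dimension of the spectra after passing far down the system. Concretely, the eigenvalue-map propagation then lets us find, at large stages $j$, diagonal projections or positive elements that are tracially equidistributed. This is the analogue of the Lindenstrauss--Weiss small-boundary property.

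\emph{Main obstacle.} The hardest step is this final implication from strict comparison, obtained through zero mean dimension, to $\mathcal Z$-stability without assuming any structure on the trace simplex. My first attempt would use the diagonal subalgebra $D$ to produce uniform property $\Gamma$ directly. I would partition the spectrum into small-boundary pieces whose diagonal characteristic-function approximants are nearly central and split every trace approximately evenly, using $\mathrm{mdim}=0$ to control the boundaries. I would then apply the Castillejos--Evington--Tikuisis--White--Winter theorem, since $A$ is nuclear.

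If this fails, the fallback is to cite Niu's earlier results for diagonal AH algebras. There, zero mean dimension was shown to imply classifiability, meaning $\mathcal Z$-stability, for such systems, and the present corollary would then need only the equivalence of zero radius of comparison with $\mathcal Z$-stability in direction (1)$\Rightarrow$(2), as given above.
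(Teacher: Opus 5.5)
Your (1)$\Rightarrow$(2) is correct. It is also exactly where the paper's new inequality enters: R{\o}rdam's theorem makes $0$ a comparison bound, hence every $r>0$ is one, and Theorem~\ref{thm:lower} gives $\mathrm{mdim}(A_i,\phi_i)\le 2r$ for all $r>0$.

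The converse is where the proposal has a genuine gap. From $\mathrm{mdim}(A_i,\phi_i)=0$, the paper's results give only $\operatorname{rc}(A)=0$; this is the older bound, Lemma~\ref{lem:upper-bound}. That yields strict comparison, since attainment of the infimum is routine: treat $(a-\epsilon)_+$ via the continuous function $\tau\mapsto\tau(f(a))$ and lower semicontinuity of $\tau\mapsto d_\tau(b)$ on the compact trace space. Passing from strict comparison to $\mathcal Z$-stability is not contained in Theorem~\ref{thm:ah-main} and must be imported. None of your routes supplies it:
\begin{itemize}
\item Route (ii) needs hypotheses on $T(A)$ that cannot be verified here. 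Simple unital AF algebras are diagonal AH systems over points, have zero mean dimension, and realize every metrizable Choquet simplex.
\item Route (i) names no result. In this class, by Theorem~\ref{thm:ah-main}, \emph{strict comparison implies $\mathcal Z$-stability} is equivalent to the implication you are trying to prove, so it cannot be assumed.
\item The uniform property $\Gamma$ route is a heuristic stated with \emph{should}. Zero mean dimension bounds the order of propagated covers relative to the ranks $n_{j,k}$, not the dimension of the spectra. The whole difficulty is constructing positive contractions that are approximately central for all of $A$ (not just $D$) in the uniform trace norm and that split every trace evenly. Your outline does not address this.
\end{itemize}

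The only workable option is your fallback, but it must then be a precise citation. You need a theorem asserting $\mathcal Z$-stability for simple unital infinite-dimensional diagonal AH algebras of zero mean dimension (or with strict comparison), in the ordinary class of Definition~\ref{def:ah}, with its hypotheses checked. A hedged recollection of what Niu's earlier paper proves is not enough; the present paper draws on that work only for the definition of mean dimension and for the upper bound $\operatorname{rc}(A)\le\frac12\mathrm{mdim}(A_i,\phi_i)$.

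You also need to state explicitly that $A$ is simple \emph{and infinite dimensional}. For the constant system $A_i=M_N$, Lemma~\ref{lem:finite-limit} gives $\mathrm{mdim}(A_i,\phi_i)=0$, while $M_N\otimes\mathcal Z\not\cong M_N$. So (2)$\Rightarrow$(1) fails for finite-dimensional limits, and your argument never excludes them.
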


\begin{corollary}
Let $(X, \Gamma)$ be a free and minimal dynamical system, where $\Gamma$ is a discrete amenable group, and $X$ is a compact metrizable space. Then the following properties are equivalent:
\begin{enumerate}
\item $\mathrm{C}(X) \rtimes \Gamma$ is $\mathcal Z$-absorbing,
\item $\mathrm{mdim}(X, \Gamma) = 0$,
\item $(X, \Gamma)$ has the small boundary property
\item the pair $(\mathrm{C}(X), \mathrm{C}(X) \rtimes \Gamma)$ has property (C) of \cite{EN-C}.
\end{enumerate}
\end{corollary}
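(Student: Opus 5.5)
The plan is to establish the cycle
\[
(3)\Rightarrow(1)\Rightarrow(4)\Rightarrow(3)
\]
and then attach $(2)$ to it. Theorem~\ref{thm:actions-main} enters only through the link between mean dimension and the radius of comparison. Several steps rely on external results that I am citing from memory rather than from this paper; I flag them as such below, and they would all need to be checked against the literature.

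\emph{$(3)\Rightarrow(1)$.} Assume $(X,\Gamma)$ is free and minimal with the small boundary property. The Kerr--Szab\'o theory, recalled from memory, gives that free actions with the small boundary property are almost finite in measure. I then need to upgrade this to almost finiteness, using minimality together with the comparison obtained from the small boundary property. From almost finiteness, Kerr's theorem yields that $C(X)\rtimes\Gamma$ is $\mathcal Z$-absorbing. The same input (small boundary property, hence almost finiteness in measure) should also give URP and COS, in the form of Niu's results, which I believe but have not verified in this exact generality. Those two properties will be used below.

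\emph{$(1)\Rightarrow(4)\Rightarrow(3)$.} If the crossed product is $\mathcal Z$-absorbing, then it has strict comparison of positive elements by R{\o}rdam, since it is simple, unital, exact and stably finite. I expect the framework of \cite{EN-C} to convert this into property (C) for the pair $(C(X),C(X)\rtimes\Gamma)$, and to show that property (C) forces the small boundary property. This part is purely a citation of \cite{EN-C}, as I understand its content; if that paper instead only gives $(4)\Rightarrow(3)$, the implication $(1)\Rightarrow(4)$ would be routed through $(3)$ by the previous paragraph.

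\emph{Incorporating $(2)$.} The direction $(3)\Rightarrow(2)$ is the classical Lindenstrauss--Weiss observation that the small boundary property implies zero mean dimension. It holds for all amenable actions: covers with small boundaries contribute negligible order to the propagated covers. For the converse, I would first try to show that $(2)$ implies URP and COS; I do not know a general reference for this. Granting it, Theorem~\ref{thm:actions-main} gives
\[
\rc(C(X)\rtimes\Gamma)=\tfrac12\mdim(X,\Gamma)=0 .
\]
The resulting strict comparison would then have to be fed into \cite{EN-C} to obtain property (C), and hence the small boundary property. As a cross-check on $(1)\Rightarrow(2)$: once URP is known, $\mathcal Z$-absorption gives $\rc=0$, and the lower bound in Theorem~\ref{thm:actions-main} forces $\mdim(X,\Gamma)=0$.

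The main obstacle is that Theorem~\ref{thm:actions-main} requires URP, whereas the corollary imposes no hypothesis beyond freeness and minimality. The delicate step is therefore $(2)\Rightarrow(3)$, or equivalently producing URP (and COS) from zero mean dimension alone. For that, I would first try to adapt the Lindenstrauss--Weiss and Gutman--Lindenstrauss--Tsukamoto marker and embedding arguments to construct, from zero mean dimension, open sets with small boundaries via exact finite-shape tilings, in the style of Downarowicz--Huczek--Zhang. If that route fails, the fallback is to rely directly on \cite{EN-C} for an equivalence between property (C) and zero mean dimension that bypasses URP.
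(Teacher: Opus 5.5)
Your proposal does not prove the corollary. The one implication that needs genuinely new input, from $\mdim(X,\Gamma)=0$ to any of (1), (3), (4), is never established.

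You correctly note that Theorem~\ref{thm:actions-main} needs URP for the lower bound and URP together with COS for the equality, while the corollary assumes only freeness and minimality. But you then simply grant that zero mean dimension yields URP and COS. For the direct route $(2)\Rightarrow(3)$ you only announce an intention to adapt the marker and embedding arguments of Lindenstrauss--Weiss and Gutman--Lindenstrauss--Tsukamoto. Those arguments rely on structure specific to $\mathbb Z$ and $\mathbb Z^k$ (markers and the associated embedding theorems), and nothing in your sketch replaces that structure for an arbitrary amenable group. Producing covers with small boundaries from $\mdim=0$ is exactly the hard content, not a routine adaptation. The fallback of finding in \cite{EN-C} an equivalence between property (C) and zero mean dimension is a hope, not a step. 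Nor can the gap be closed from the paper. It gives no separate argument for this corollary, and its only relevant input, Theorem~\ref{thm:actions-main} together with \cite[Theorem 4.8]{Niu}, can be invoked only after URP and COS are known.

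The remaining links are also weaker than you present them. In $(3)\Rightarrow(1)$ there is no comparison ``obtained from the small boundary property.'' The Kerr--Szab\'o theorem you are recalling says two things: SBP is equivalent to almost finiteness in measure, and in its presence almost finiteness is equivalent to dynamical comparison. So comparison is precisely the additional ingredient that has to be supplied. For the same reason COS, which is itself a comparison hypothesis, does not follow from SBP unless $\mathcal Z$-absorption is already known; in that case R{\o}rdam's strict comparison gives COS with $\lambda=1$, $k=1$. URP is the harmless part: from almost finiteness in measure one passes to closed subcastles with uniformly small remainder, by a compactness argument over $\Mu$ of the kind used in the proof of Proposition~\ref{prop:finite-shape-allocation}. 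The chain $(1)\Rightarrow(4)\Rightarrow(3)$ rests entirely on an unchecked reading of \cite{EN-C}.

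Finally, note what Theorem~\ref{thm:actions-main} actually buys in your scheme. Suppose (1), (3), (4) are known to be equivalent and to imply URP and COS. Then the theorem gives $\rc=\tfrac12\mdim$ on that class and hence $(1)\Rightarrow(2)$, but $(3)\Rightarrow(2)$ was already classical. The converse direction requires URP and COS, or SBP directly, from $\mdim(X,\Gamma)=0$ alone, and that is the missing idea.
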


\begin{remark}[Compression theorem and scope]
The main technical ingredient is \cite[Theorem 1.3]{OAI-top}, restated as
Theorem~\ref{input:compression}. Section~\ref{sec:torus-compression}
defines its bundle $E_f$ over the entire sparse-parameter locus.
Lemma~\ref{lem:ah-embedding} verifies the embedding hypothesis for AH
blocks, and Section~\ref{sec:compression-application} identifies all
the data in the dynamical application. The comparison-to-bundle
construction and colored packet reconstruction are adapted from
\cite[Sections 3--5]{OAI-main}.

\end{remark}

The common tools for AH algebras and crossed product C*-algebras are discussed in Sections~\ref{sec:common-conventions}--\ref{sec:common-packets}. In
Sections~\ref{sec:ah-definitions}--\ref{sec:ah-consequences}, the theorem is proved for AH algebras.  The crossed product C*-algebras are handled in Sections~\ref{sec:actions-conventions}--\ref{sec:individual-cover}. The AH argument decreases the maximum cover rate over all later AH components, while argument on the crossed product C*-algebras decreases the rate of an entire orbit cover, using exact group tilings, padded slots, and weighted sparse-coordinate compression.

\subsubsection*{Acknowledgements} 

The work was carried out using ChatGPT. The first author is supported by an NSERC Discovery Grant, and the second author is supported by a Simons Foundation Grant (MP-TSM-00002606)

\section{Comparison and cover conventions}\label{sec:common-conventions}
Throughout the paper, normalized traces on a unital algebra are extended to matrix
amplifications using the unnormalized matrix trace. For each 
$b\in M_\infty(A)_+$ and $\tau\in T(A)$, put
\[
 d_\tau(b)=\lim_{m\to\infty}(\tau\otimes\Tr)(b^{1/m}).
\]
A finite $r\geq0$ is a comparison bound if
\[
 d_\tau(b_0)+r<d_\tau(b_1)\quad(\tau\in T(A))
 \quad\Longrightarrow\quad b_0\bpre b_1
\]
for all positive $b_0,b_1\in M_\infty(A)$. Here $\bpre$ denotes
Cuntz subequivalence. The infimum of the comparison bounds, with the
infimum of the empty set equal to infinity, is $\rc(A)$. ( Since all C*-algebras in this paper are nuclear, we use trace in the definition of rc.)
For a rank-$R$ projection in an amplification of a homogeneous corner
whose unit has rank $n$, the normalized fiber trace has value $R/n$.

For a finite open cover $\alpha$ of a nonempty compact metrizable space, let
$\DD(\alpha)$ be the minimum order of a finite open refinement, where
order is the maximum number of members through a point minus one.
We use the standard inequalities
\[
 \DD(\alpha\vee\beta)\leq\DD(\alpha)+\DD(\beta),
 \qquad \DD(f^{-1}\alpha)\leq\DD(\alpha).
\]
Each application specifies the propagation of its covers and the normalization
of their limiting rates.

\begin{lemma}[A fixed compression for weighted coordinate supports]
\label{lem:weighted-coordinate-compression}
Let $\mathcal I$ be a finite set, let $d_i\geq 0$ be integers, and let
$q\geq 0$ be an integer.  There is a complex linear map
\[
 C:\bigoplus_{i\in\mathcal I}\mathbb C^{d_i}\longrightarrow
 \mathbb C^q
\]
whose restriction to each coordinate subspace
$\bigoplus_{i\in\Lambda}\mathbb C^{d_i}$ with
$\sum_{i\in\Lambda}d_i\leq q$ is injective.
\end{lemma}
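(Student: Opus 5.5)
We take $C$ to be a generic linear map, and show that the bad set of maps is a finite union of proper algebraic subsets of the space of all linear maps. Put $N=\sum_{i\in\mathcal I}d_i$. Identify complex linear maps $\bigoplus_{i}\mathbb C^{d_i}\to\mathbb C^q$ with $q\times N$ complex matrices, that is, with $\mathbb C^{qN}$.

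For a subset $\Lambda\subseteq\mathcal I$, write $V_\Lambda=\bigoplus_{i\in\Lambda}\mathbb C^{d_i}$ and $d_\Lambda=\dim V_\Lambda=\sum_{i\in\Lambda}d_i$. Call $\Lambda$ admissible if $d_\Lambda\le q$. Since $\mathcal I$ is finite, there are only finitely many admissible $\Lambda$.

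For each admissible $\Lambda$, let $Z_\Lambda$ be the set of matrices $C$ with $C|_{V_\Lambda}$ not injective. The restriction $C|_{V_\Lambda}$ is the $q\times d_\Lambda$ submatrix of $C$ formed by the columns indexed by coordinates in $\Lambda$. Because $d_\Lambda\le q$, this restriction is injective exactly when the submatrix has full column rank $d_\Lambda$. Full column rank holds exactly when some $d_\Lambda\times d_\Lambda$ minor is nonzero. So $Z_\Lambda$ is the common zero set of finitely many polynomials in the entries of $C$, and is a Zariski-closed subset of $\mathbb C^{qN}$.

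$Z_\Lambda$ is a proper subset. Take the matrix whose columns indexed by $V_\Lambda$ are the first $d_\Lambda$ standard basis vectors of $\mathbb C^q$, and whose other columns are zero; this is possible since $d_\Lambda\le q$. That matrix lies outside $Z_\Lambda$.

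A proper Zariski-closed subset of $\mathbb C^{qN}$ is nowhere dense, and in fact has Lebesgue measure zero. A finite union of such sets cannot cover $\mathbb C^{qN}$. Hence there is a matrix $C\notin\bigcup_{\Lambda\text{ admissible}}Z_\Lambda$, and this $C$ has the required property. Equivalently, the finitely many nonzero polynomials involved (one chosen minor for each $\Lambda$) have a nonzero product, and a nonzero polynomial over the infinite field $\mathbb C$ does not vanish identically, so some $C$ makes every factor nonzero.

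The degenerate cases need no separate argument. If $q=0$, the only admissible $\Lambda$ have $d_\Lambda=0$, so $V_\Lambda=0$ and injectivity is vacuous. Summands with $d_i=0$ contribute nothing.

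No step is a real obstacle. The one point that needs care is that the minors condition characterizes injectivity only because $d_\Lambda\le q$; this is exactly where the hypothesis $\sum_{i\in\Lambda}d_i\le q$ enters. An alternative construction is a Vandermonde-type matrix: send the basis vectors $e_1,\dots,e_N$ to the vectors $(1,t_k,\dots,t_k^{q-1})$ with distinct $t_k$. Any $\le q$ of these vectors are linearly independent, which already gives injectivity on every subspace spanned by at most $q$ basis vectors, hence on every admissible $V_\Lambda$.
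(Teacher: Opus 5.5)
Your proof is correct and follows the paper's argument: for each admissible $\Lambda$, failure of injectivity on that coordinate subspace is a proper determinantal subset (proper exactly because $d_\Lambda\le q$), and finitely many such subsets cannot exhaust the space of linear maps. Your explicit Vandermonde construction is also valid, and it gives the slightly stronger conclusion of injectivity on every subspace spanned by at most $q$ standard basis vectors.
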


\begin{proof}
For each such $\Lambda$ with positive total dimension, failure of
injectivity is a proper determinantal algebraic subset of the space of
all linear maps to $\mathbb C^q$: it is proper because that particular
coordinate subspace has dimension at most $q$.  There are only finitely
many subsets $\Lambda$.  A finite union of proper algebraic subsets
cannot exhaust this complex vector space.  Any map outside the union
works.  Zero-dimensional coordinate subspaces impose no condition; in
particular the assertion for $q=0$ is immediate.
\end{proof}

\section{Boundary-preserving torus compression}\label{sec:torus-compression}
For $p\geq1$ set $t=2p+1$ and $C_t=[-1,1]^t$. Choose the geometric data of \cite[Notation 1.2 and Lemma 5.1]{OAI-top}: a fixed smooth hypersurface
\[
 L\cong(\mathbb T^2)^p\subset(-1/32,1/32)^t
\]
with a framed real normal line, and the rank-$p$ complex vector bundle
\[
 \mathcal H=\bigoplus_{i=1}^p\lambda_i\subset L\times\C^{2p},
 \qquad
 \left\langle\prod_{i=1}^pc_1(\lambda_i),[L]\right\rangle=1.
\]
Here each $\lambda_i\subset L\times\C^2$ is the pullback of a degree-one line bundle from the $i$th two-torus factor; $L$ has the product orientation. Denote the orthogonal projection field of $\mathcal H$ by $Q:L\to M_{2p}(\C)$. On $L^n$ define
\[
 \mathcal H_n=\bigoplus_{\nu=1}^n\operatorname{pr}_\nu^*\mathcal H
 \subset L^n\times\C^{2pn},
\]
where $\operatorname{pr}_\nu$ selects the $\nu$th torus slot. In the notation of \cite{OAI-top}, $C(t,n)=C_t^n$ and $T(t,n)=([-2,2]^t)^n$.

For $s\geq0$ define
\[
 Z_s(t,n)=\{z\in([-2,2]^t)^n:|\{j:z_j\ne0\}|\leq\lfloor s\rfloor\}.
\]
For a continuous $f:Y\to C_t^n$, let
\[
 \mathcal T(f,Z_s)=\{(y,z):z\in Z_s(t,n),\ f(y)-z\in L^n\},
 \qquad d_f(y,z)=f(y)-z.
\]
The bundle denoted by $E_f$ in \cite[Notation 1.2]{OAI-top} is the pullback
\[
 \mathcal E_f=d_f^*\mathcal H_n
 \longrightarrow\mathcal T(f,Z_s),
 \qquad
 (\mathcal E_f)_{(y,z)}
 =\bigoplus_{\nu=1}^n\mathcal H_{f_\nu(y)-z_\nu}.
\]
Thus $\mathcal E_f$ has complex rank $pn$, with a canonical inclusion in the trivial bundle of rank $2pn$. Its projection in that ambient bundle is
\[
 \Pi_f(y,z)=\operatorname{diag}_{\nu=1}^n
       Q(f_\nu(y)-z_\nu).
\]
In the following theorem we take $s=an$. The required embedding has a target of rank strictly smaller than the canonical ambient rank $2pn$, and must be continuous simultaneously in both $y$ and $z$ across the entire sparse-parameter locus.

The following is Theorem 1.3 of \cite{OAI-top}.
\begin{theorem}[Torus compression]\label{input:compression}
Let $p,n\geq1$ be integers, put $t=2p+1$, and choose $L,\mathcal H$ as above. Let $Y$ be compact metrizable, $0<a<1/16$, and $f:Y\to C_t^n$ continuous. Suppose there is one continuous complex bundle monomorphism
\[
 \mathcal E_f\longrightarrow
 \mathcal T(f,Z_{an})\times\C^q,
 \qquad q\in\Z_{\geq0},\quad q<2pn,
\]
on the entire compact sparse-parameter locus. Then there is an integer $u\geq1$ and a continuous map $G:Y^u\to C_t^{un}$ such that at every point at least $(a/8)un$ slots are on $\partial C_t$, and each original boundary slot of $f^{\times u}$ is preserved as an entire vector. The cited theorem allows $u$ arbitrarily large; only existence is needed here.
\end{theorem}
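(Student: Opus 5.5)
Since the statement is \cite[Theorem 1.3]{OAI-top} quoted verbatim, the first task is to check that its hypotheses and conclusion match the cited source exactly under our notation. We have $C(t,n)=C_t^n$ and $T(t,n)=([-2,2]^t)^n$. The locus $\mathcal T(f,Z_{an})$ is the one of \cite[Notation 1.2]{OAI-top}. The bundle $\mathcal E_f=d_f^*\mathcal H_n$ has rank $pn$ and carries its canonical projection $\Pi_f$. I would verify that the cited $E_f$ is the pullback along $d_f(y,z)=f(y)-z$ of the same product bundle $\bigoplus_\nu\operatorname{pr}_\nu^*\mathcal H$. I would also verify that the geometric data $L$ and $\mathcal H$, with $\langle\prod_i c_1(\lambda_i),[L]\rangle=1$, are those of \cite[Lemma 5.1]{OAI-top}. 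Once this matching is done, the theorem applies directly. The remaining paragraphs only sketch the mechanism, to make clear which hypotheses are essential.

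\emph{Obstruction step.} A monomorphism $\mathcal E_f\to\C^q$ with $q<2pn$ yields a complement bundle $\mathcal F$ of rank $q-pn<pn$ satisfying $\mathcal E_f\oplus\mathcal F\cong\C^q$. The total Chern class of $\mathcal E_f$ is then invertible with inverse of degree at most $2(q-pn)$. In particular, products of the classes $\operatorname{pr}_\nu^*c_1(\lambda_i)$ over sufficiently many slots must vanish on $\mathcal T(f,Z_{an})$. On the other hand, the normalization $\langle\prod c_1(\lambda_i),[L]\rangle=1$ says that each slot with $f_\nu(y)-z_\nu\in L$ contributes a nonzero degree-$2p$ class. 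So the vanishing can be read as the failure of a certain fundamental-class pairing. Geometrically, this means $f$ cannot essentially cover $L^n$ after translations by sparse $z$.

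\emph{Geometric step.} I would use the framed normal line of $L\subset(-1/32,1/32)^t$. Wherever a slot $f_\nu(y)$ lies near a translate $L+z_\nu$, the slot can be pushed along the normal direction out to $\partial C_t$. Failure of essential covering provides a continuous choice of a sparse translation parameter $z$ at which such pushes are allowed. Slots already on $\partial C_t$ are left fixed, which gives preservation of boundary vectors.

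\emph{Main obstacle.} The hardest part is uniformity: obtaining at least $(a/8)un$ boundary slots at every point, continuously in $y$. Pointwise, the obstruction only yields some good slots. To get a uniform proportion, I would pass to the product $Y^u$ with $f^{\times u}$ and apply the argument blockwise. The blocks would then be combined by a partition-of-unity/averaging argument so that each block contributes a fixed fraction, losing a constant factor (hence $a/8$ rather than $a$). Ensuring continuity of these choices across the whole compact sparse-parameter locus is exactly what the continuity of the monomorphism in both $y$ and $z$ is needed for. For the present paper only the existence of some $u\geq1$ is used.
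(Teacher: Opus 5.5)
Your proposal is correct and matches the paper: the statement is not proved here but imported as \cite[Theorem 1.3]{OAI-top}, and the paper's only work (Section~\ref{sec:torus-compression}) is the matching you describe. That matching consists of $C(t,n)=C_t^n$, $T(t,n)=([-2,2]^t)^n$, the locus $\mathcal T(f,Z_{an})$, the identification of the cited $E_f$ with $\mathcal E_f=d_f^*\mathcal H_n$, and the torus data $L,\mathcal H$ of \cite[Notation 1.2, Lemma 5.1]{OAI-top}. Your mechanism sketch is only heuristic (in particular, the step producing a uniform $(a/8)un$ boundary slots is not actually argued), but neither you nor the paper relies on it, so the citation-based argument stands.
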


The theorem applies to arbitrary compact metrizable parameter spaces.
In both applications below, the rank saving is supplied by a single
bundle monomorphism continuous in the base point and in the sparse
parameter. Exact preservation of a whole boundary slot is the
relative condition needed for packet reconstruction.

\section{Colored covers and reconstruction}\label{sec:common-packets}
The following elementary lemmas reproduce the cover mechanism. They apply both to diagonal eigenmap factors and to finite orbit covers.

\begin{lemma}[Polyhedral refinement]\label{lem:refinement}
Let $X$ be nonempty compact metrizable and let $\phi:X\to P$ be continuous, where $P$ is a finite polyhedron of dimension at most $q$. If each point of $\phi(X)$ has an open neighborhood whose whole inverse image lies in a member of a finite open cover $\mathcal V$, then $\DD(\mathcal V)\leq q$.
\end{lemma}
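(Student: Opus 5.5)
The plan is the standard nerve pullback. Use compactness of $\phi(X)$ to pick a finite open cover of $\phi(X)$ by sets $W_1,\dots,W_m$, open in $P$, each with $\phi^{-1}(W_k)\subset V_k$ for some $V_k\in\mathcal V$. Then refine $\{W_k\}$ inside $P$ to a cover of order at most $q$; since $\dim P\le q$, this is available. Pulling back along $\phi$ gives a refinement of $\mathcal V$ of order at most $q$, so $\DD(\mathcal V)\leq q$. The only care needed is that the refinement lives on $P$, or on a neighborhood of $\phi(X)$, rather than on $X$ directly.

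First, for each $x'\in\phi(X)$ choose the hypothesised open neighborhood $W_{x'}\subset P$ and a member $V_{x'}\in\mathcal V$ with $\phi^{-1}(W_{x'})\subset V_{x'}$. Since $\phi(X)$ is compact, finitely many $W_1,\dots,W_m$ cover it. Put $W=\bigcup_k W_k$, an open subset of $P$ containing $\phi(X)$.

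Second, produce an open cover $\mathcal O$ of the compact set $\phi(X)$ by sets open in $P$. It should refine $\{W_k\}$ and have order at most $q$ at every point of $\phi(X)$. The cleanest route uses a Lebesgue number $\delta>0$ for the cover $\{W_k\cap\phi(X)\}$ of $\phi(X)$, taken in a fixed metric on the polyhedron $P$. Then choose a triangulation of $P$ of mesh less than $\delta/2$, and let $\mathcal O$ consist of the open stars of its vertices. These open stars cover $P$ and have order exactly $\le\dim P\le q$, since a point lies in the open star of $v$ only if $v$ is a vertex of its carrier simplex, and that simplex has at most $q+1$ vertices. Keep only the stars meeting $\phi(X)$. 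Each such star has diameter less than $\delta$, so its intersection with $\phi(X)$ lies in some $W_k$. To get the containment in $P$ itself, also shrink $\delta$ so that any set of diameter $<\delta$ meeting $\phi(X)$ lies in some $W_k$. This holds by a Lebesgue-number argument for the compact set $\phi(X)$ and the open cover $\{W_k\}$ of it in $P$: otherwise one gets points $x_j\in\phi(X)$ and sets $S_j\ni x_j$ of diameter at most $1/j$ with no $W_k$ containing $S_j$, and a limit point of the $x_j$ lies in some $W_k$ together with a ball around it, a contradiction.

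Third, set $\mathcal U=\{\phi^{-1}(O):O\in\mathcal O\}$. This is a finite open cover of $X$, since $\mathcal O$ covers $\phi(X)$. Each member satisfies $\phi^{-1}(O)\subset\phi^{-1}(W_k)\subset V_k\in\mathcal V$, so $\mathcal U$ refines $\mathcal V$. A point $x\in X$ lies in $\phi^{-1}(O)$ exactly when $\phi(x)\in O$, so the number of members of $\mathcal U$ through $x$ is at most the number of open stars through $\phi(x)$, which is at most $q+1$. Hence the order of $\mathcal U$ is at most $q$, and $\DD(\mathcal V)\le q$.

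The only real point is the uniform step producing $\delta$ with containment in $W_k$ and not merely in $W_k\cap\phi(X)$; the compactness argument above handles it. Fine subdivision of a finite polyhedron, for example by iterated barycentric subdivision, supplies the small mesh. If $P$ is presented only as a finite simplicial complex up to homeomorphism, transport the metric through that homeomorphism first.
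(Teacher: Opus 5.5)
Your proof is correct and follows the paper's route: choose finitely many of the hypothesised neighborhoods covering $\phi(X)$, subdivide $P$ finely enough that its vertex stars refine them, and pull the stars back to get a refinement of $\mathcal V$ of order at most $q$. The only difference is minor. The paper adjoins the open set $P\setminus\phi(X)$ so that it can use an ordinary Lebesgue number on the compact space $P$, whereas you discard the stars missing $\phi(X)$ and prove a relative Lebesgue-number statement; the two devices are interchangeable.
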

\begin{proof}
Choose finitely many such neighborhoods, add $P\setminus\phi(X)$, and subdivide $P$ so that its vertex stars refine this cover. The pulled-back vertex stars form a refinement of $\mathcal V$ of order at most $q$.
\end{proof}

\begin{lemma}[Colored nerve]\label{lem:nerve}
For a finite open cover $\mathcal V$ of a nonempty compact metrizable space $X$ and $m=\DD(\mathcal V)$ there is a continuous map $\varphi:X\to S$ to a finite simplicial complex of dimension at most $m$. Its vertices have colors $1,\ldots,m+1$, distinct on each simplex. Each vertex $v$ is assigned a member $V(v)\in\mathcal V$ such that
\[
 \lambda_v(\varphi(x))>0\quad\Longrightarrow\quad x\in V(v).
\]
\end{lemma}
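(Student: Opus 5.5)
The plan is the classical nerve construction, with the refinement chosen so that a proper coloring comes for free. Since $m=\DD(\mathcal V)$, we first fix a finite open refinement $\mathcal W=\{W_1,\dots,W_k\}$ of $\mathcal V$ of order at most $m$. Each $W_j$ lies in some member of $\mathcal V$, and we choose one and call it $V(W_j)$. Discarding empty members, we may assume every $W_j\neq\emptyset$. The simplicial complex will not be the nerve of $\mathcal W$ itself, because colors must be distinct on each simplex and an arbitrary nerve need not be $(m+1)$-colorable in that sense. Instead, we first pass to a refinement that is a disjoint union of $m+1$ families, each consisting of pairwise disjoint open sets.

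The standard way to get such a refinement is the Ostrand/Kolmogorov-type decomposition for covers of order $\le m$. Take a partition of unity $(\psi_j)$ subordinate to $\mathcal W$ and let $\Psi:X\to N(\mathcal W)$ be the induced map into the nerve; its image lies in the $m$-skeleton. Then pass to the barycentric subdivision $N'$ of $N(\mathcal W)$. Its vertices are barycenters $b_\sigma$ of simplices $\sigma$, and we give $b_\sigma$ the color $\dim\sigma+1\in\{1,\dots,m+1\}$. Vertices of a simplex of $N'$ correspond to a strictly increasing chain of faces, so their colors are distinct. This gives the complex $S$: the full subcomplex of $N'$ carrying $\Psi(X)$, or simply $N'$ restricted to dimension $\le m$. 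It has dimension at most $m$. We set $\varphi=\Psi$, viewed in $N'$.

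For the assignment, we pick for each vertex $b_\sigma$ some vertex $j\in\sigma$ and set $V(b_\sigma)=V(W_j)$. We then check the implication. If $\lambda_{b_\sigma}(\varphi(x))>0$, then $\varphi(x)$ lies in the open star of $b_\sigma$ in $N'$. That open star is contained in the open star of every vertex of $\sigma$ in $N(\mathcal W)$, since any point of the open star of $b_\sigma$ lies in the interior of a simplex containing $\sigma$. Hence $\psi_j(x)>0$ for $j\in\sigma$, so $x\in W_j\subset V(W_j)=V(b_\sigma)$.

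The main point needing care is this star containment. It amounts to the fact that a point of the open star of $b_\sigma$ in the subdivision lies in the relative interior of some simplex $\tau\supseteq\sigma$, so all barycentric coordinates indexed by $\sigma$ are positive. Everything else is routine. Continuity of $\varphi$ follows from the partition of unity, and the dimension bound follows from the order of $\mathcal W$ being at most $m$.
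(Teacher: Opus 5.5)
Your proposal is correct and is essentially the paper's argument: refine $\mathcal V$ to a cover of order $m$, map into its nerve by a subordinate partition of unity, barycentrically subdivide, color $b_\sigma$ by $\dim\sigma+1$, and assign to $b_\sigma$ the cover member of a chosen vertex of $\sigma$, so that positivity of the new coordinate forces positivity of that vertex's original coordinate. The Ostrand-type decomposition you announce at the start is never used and can be dropped, since the barycentric coloring already provides the required coloring.
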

\begin{proof}
Take the nerve of a refinement of order $m$, with a subordinate partition-of-unity map, and barycentrically subdivide. Color the vertex corresponding to a face $F$ by $\dim F+1$. Assign it any one vertex of $F$ and the original cover member assigned to that vertex. Positivity of the new coordinate implies positivity of the chosen original coordinate.
\end{proof}

\begin{lemma}[Exact packet reconstruction]\label{lem:packets}
Let $S$ be as above, fix $t\geq1$, and put $g=\lfloor(m+1)/(t+1)\rfloor$. There are continuous maps $\mathfrak f_j:S\to C_t$, $1\leq j\leq g$, with this property: given a continuous $y:Y\to S$ and continuous $v_j:Y\to C_t$ satisfying
\[
 v_j(z)=\mathfrak f_j(y(z))\quad\text{when }\mathfrak f_j(y(z))\in\partial C_t,
\]
there is a continuous $\widetilde y:Y\to S$ with vertex support contained in the original vertex support and satisfying
\[
 \dim\car(\widetilde y(z))\leq
 m-|\{j:v_j(z)\in\partial C_t\}|.
\]
\end{lemma}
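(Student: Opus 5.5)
The plan is to split the $m+1$ colors of $S$ into $g$ disjoint \emph{packets} $P_1,\dots,P_g$ of exactly $t+1$ colors each, discarding the leftover colors, and to let $\mathfrak f_j$ depend only on the barycentric weights of $P_j$. Fix a point $s\in S$. Its carrier simplex has at most one vertex of each color (Lemma~\ref{lem:nerve}), so $s$ determines a weight vector $w^{(j)}(s)=(w_c)_{c\in P_j}\in[0,1]^{t+1}$. Here $w_c$ is the barycentric coordinate of the unique carrier vertex of color $c$, or $0$ if there is none. These vectors depend continuously on $s$.

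The key geometric input is a continuous map $H:K\to C_t$ on the solid simplex
\[
K=\Bigl\{w\in[0,1]^{t+1}:\textstyle\sum_c w_c\leq1\Bigr\}
\]
with two properties.
\begin{enumerate}
\item Every face $\{w_c=0\}$ goes into $\partial C_t$.
\item $H$ admits a continuous \emph{fibrewise inverse}: a map $\Phi(w,v)\in K$, defined for $w\in K$ and $v\in C_t$, that preserves $\sum_c w_c$, satisfies $H(\Phi(w,v))=v$ and $\Phi(w,H(w))=w$, and keeps $\Phi(w,v)_c=0$ whenever $w_c=0$.
\end{enumerate}
I would build $H$ as $h\circ\pi$. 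Here $\pi$ is a projection of each level set $\{\sum_c w_c=W\}\cap K$, a scaled $t$-simplex $W\Delta^t$, onto $\Delta^t$ that sends faces to faces. The map $h:(\Delta^t,\partial\Delta^t)\to(C_t,\partial C_t)$ is a homeomorphism of pairs. We then put $\mathfrak f_j(s)=H(w^{(j)}(s))$.

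Given $y$ and $v_j$, the reconstruction replaces the packet weights of $y(z)$ by $\Phi(w^{(j)}(y(z)),v_j(z))$ for every $j$ and leaves all other coordinates unchanged. The total weight is unchanged, so $\widetilde y(z)$ is again a point of the same closed simplex, and $\widetilde y$ is continuous. Vertex support does not grow for two reasons. First, a color absent from the carrier has $w_c=0$, so $\mathfrak f_j(y(z))\in\partial C_t$. The hypothesis then gives $v_j=\mathfrak f_j(y(z))$, and $\Phi(w,H(w))=w$ changes nothing. Second, in all cases $\Phi$ keeps zero coordinates at zero.

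For the dimension estimate, if $v_j(z)\in\partial C_t$ then $h^{-1}(v_j(z))\in\partial\Delta^t$. Some coordinate of packet $j$ therefore becomes $0$, and packet $j$ loses a color. The packets are disjoint, so at least $|\{j:v_j(z)\in\partial C_t\}|$ colors drop out of the carrier. Since the carrier has at most $m+1$ colors, its dimension falls to at most $m-|\{j:v_j(z)\in\partial C_t\}|$.

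The main obstacle is continuity of $\mathfrak f_j$ and $\Phi$ near total packet weight $W=0$. There the normalized direction $w/W$ is undefined, while the faces $\{w_c=0\}$ all meet at $0$.

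My first attempt would be a radial projection from a point slightly outside $K$, namely $c_\varepsilon=-\varepsilon(1,\dots,1)$. This sends each level set $\{\sum_c w_c=W\}\cap K$ onto a simplex $\Delta'_W$ that varies continuously in $W$, including $W=0$, where it degenerates to the projection of the single point $0$. The difficulty is that at $W=0$ the faces collapse, so I would need $h$ replaced by a continuous family $h_W:(\Delta'_W,\partial\Delta'_W)\to(C_t,\partial C_t)$. I do not expect this to extend honestly to $W=0$.

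If that fails, the fallback is to exploit the slack in the statement. When the packet is nearly empty, all $t+1$ of its colors are weak. So I would first perturb $y$ within its own vertex support, using a cutoff in $W$ that pushes small weights of packet $j$ to zero. This places such points in a face, where $\mathfrak f_j$ may be defined as a fixed boundary point via a contraction of the contractible union of faces. The inverse $\Phi$ would then only be needed where $W$ is bounded below, and there $h\circ(w/W)$ and its inverse are plainly continuous.
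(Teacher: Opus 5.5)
The gap is in the ``key geometric input,'' and it is more than a technicality at $W=0$. As stated, property (2) is contradictory. If $w_c=0$, then $\Phi(w,v)$ lies in the face $\{w_c=0\}$, which (1) sends into $\partial C_t$, so $H(\Phi(w,v))=v$ fails for interior $v$. At $w=0$, preservation of $\sum_cw_c$ forces $\Phi(0,v)=0$, hence $H(0)=v$ for every $v$.

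The first defect can be repaired by requiring $\Phi$ only on admissible pairs. The underlying scheme cannot be repaired: keep the mass of packet $j$, leave all other coordinates alone, and delete a color of packet $j$ whenever $v_j\in\partial C_t$. This fails for \emph{every} continuous choice of $\mathfrak f_j$. Inside a simplex of $S$ that also carries colors outside packet $j$, freeze those coordinates and let the packet mass $W$ tend to $0$. By uniform continuity, $\mathfrak f_j$ is nearly constant on the level simplex $\{M_j=W\}\cong W\Delta^t$. So one can choose an admissible $v_j$ with values in $\partial C_t$ on that whole simplex. The reconstructed packet would then be a support-non-increasing map from $W\Delta^t$ into its own boundary. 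Its restriction to the boundary maps each face into itself, so it is homotopic to the identity of $\partial(W\Delta^t)$ and cannot extend over the simplex. The obstacle you flagged is therefore fatal to packetwise mass preservation.

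Your fallback points the right way, but it omits the two ingredients that make the proof work.

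First, a light packet must be discarded outright and its mass returned to the other colors by renormalization. The paper replaces packet $j$ by $\eta(M_j)M_j\kappa^{-1}(v_j)$, with $\eta=0$ on $[0,\tau]$ and $\eta=1$ on $[2\tau,1]$, and divides all color weights by the new total $Z$. This needs $\tau<1/(4(m+1))$, which gives $Z\geq1-2g\tau>0$. The new vertex coordinates are $\lambda_v\widehat c_\ell/(c_\ell Z)$.

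Second, the thresholds must be nested. The map $\mathfrak f_j=\kappa(w_j/M_j)$ has to be boundary-preserving on the whole region $M_j\geq\tau$, where the packet keeps positive mass. Any modification of $\mathfrak f_j$ near $M_j=0$ must be confined to $\{M_j\leq\tau\}$, where $\eta=0$; the paper interpolates there to the interior barycenter, $q_j=b+(w_j-M_jb)/\tau$.

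Making $\mathfrak f_j$ a fixed boundary point ``via a contraction'' does not achieve this by itself. There is no homotopy through maps of pairs from the degree-one map $h(w/W)$ to a constant. So somewhere in the transition some face points go to the interior of $C_t$, and $v_j$ is unconstrained there. If the packet still has positive mass at those points, writing $h^{-1}(v_j)$ into it creates colors outside the original carrier. Once the cutoff's transition lies inside the boundary-preserving region and the renormalization is added, your support and color-counting arguments go through, and the fallback becomes the paper's proof.
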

\begin{proof}
Fix a boundary-preserving homeomorphism $\kk:\Delta^t\to C_t$. Let $c_\ell$ be the sum of vertex coordinates of color $\ell$. Split the first $(t+1)g$ colors into packets, with packet weight vector $w_j$ and total mass $M_j$. Set $b=(1/(t+1),\ldots,1/(t+1))$, and choose $0<\tau<1/(4(m+1))$. The continuous simplex vectors
\[
 q_j=\begin{cases}w_j/M_j,&M_j\geq\tau,\\
 b+(w_j-M_jb)/\tau,&M_j\leq\tau
 \end{cases}
\]
give $\mathfrak f_j=\kk\circ q_j$.

Choose $\eta:[0,1]\to[0,1]$ continuous, zero on $[0,\tau]$ and one on $[2\tau,1]$. Replace packet $j$ by the unnormalized vector $\eta(M_j)M_j\kk^{-1}(v_j)$ and leave leftover colors unchanged. Let $\widehat c_\ell$ be the resulting color weights, with sum $Z$. Then $Z\geq1-2g\tau>0$. If an original color is absent, its new weight is zero: when the packet survives, $M_j>\tau$ and its original normalized vector is on the simplex boundary, so the boundary-preservation assumption fixes that vector.

For a vertex $v$ of color $\ell$, define its new coordinate to be
\[
 \widetilde\lambda_v=\begin{cases}
 \lambda_v\widehat c_\ell/(c_\ell Z),&c_\ell>0,\\
 0,&c_\ell=0.
 \end{cases}
\]
At $c_\ell=0$ this is continuous: $\lambda_v/c_\ell\leq1$, whereas $\widehat c_\ell\to0$ and $Z$ is bounded below. The new coordinates sum to one and use only original vertices, so they define a point of $S$. Each boundary output removes at least one color from its packet, or the packet is discarded entirely. The packets are disjoint and each color occurs at most once in a carrier, giving the asserted bound.
\end{proof}

\begin{remark}
The saving is measured from the common bound $m$. Thus a boundary input
already present is counted against the full palette of $m+1$ colors,
as required in both applications.
\end{remark}

\part{Diagonal AH algebras}\label{part:ah}

\section{Diagonal presentations and their mean dimension}\label{sec:ah-definitions}

\begin{definition}[Ordinary diagonal corner presentation]\label{def:ah}
An ordinary diagonal corner AH presentation is a unital inductive
system $\mathcal P=(A_i,\phi_{i,i+1})$ with
\[
 A_i=\bigoplus_{s=1}^{h_i}
       p_{i,s}M_{r_{i,s}}(C(X_{i,s}))p_{i,s},
 \qquad A=\varinjlim_i A_i.
\]
Here $h_i,r_{i,s}$ are positive integers, $X_{i,s}$ is a nonempty
compact connected metrizable space, and $p_{i,s}$ is a continuous
projection of nonzero constant rank $n_{i,s}$.  Write $E_{i,s}$ for
its range bundle.  Injectivity of the connecting maps is not assumed.

For every adjacent pair of stages and every target summand there is
a finite occurrence list of continuous eigenmaps, each with a specified
source summand, and a global unitary bundle identification
\[
 E_{i+1,k}\cong
 \bigoplus_{s=1}^{h_i}\ \bigoplus_{a=1}^{m_{i,i+1}^{s,k}}
       (\lambda_{i,i+1}^{s,k,a})^*E_{i,s},
 \qquad
 \lambda_{i,i+1}^{s,k,a}:X_{i+1,k}\longrightarrow X_{i,s}.
\]
Under this identification, the connecting map is the direct sum of
the operators $f_s(\lambda_{i,i+1}^{s,k,a}(x))$.  In particular,
there are no extra multiplicity bundles or twisting projections.
This is the ordinary diagonal class of \cite{NiuAH}, as distinguished
from the generalized diagonal maps in its Section~7.

We fix these adjacent occurrence lists and obtain the lists for
$\phi_{i,j}$ by composition and concatenation along all paths.
Occurrences are retained separately even when their eigenmaps agree.
Thus for $j\geq i$ the target bundle $E_{j,k}$ is globally identified
with the direct sum of
$(\lambda_{i,j}^{s,k,a})^*E_{i,s}$, where
$1\leq a\leq m_{i,j}^{s,k}$, and
\begin{equation}\label{eq:ah-rank-identity}
 n_{j,k}=\sum_{s=1}^{h_i}m_{i,j}^{s,k}n_{i,s}.
\end{equation}
The identity map at a stage has one identity occurrence for each
summand.  The bundle identifications for later maps are those induced
by the adjacent identifications.
\end{definition}

Let $X_i=\bigsqcup_{s=1}^{h_i}X_{i,s}$.  Covers of $X_i$ whose
members lie in individual summands form a cofinal family of finite
open covers.  Such a cover is equivalently a tuple
$\alpha=(\alpha_s)_{s=1}^{h_i}$ of finite open covers of $X_{i,s}$.
Define its propagated cover on $X_{j,k}$ by
\begin{equation}\label{eq:ah-propagated-cover}
 \alpha_{i,j,k}
  =\bigvee_{s=1}^{h_i}\ \bigvee_{a=1}^{m_{i,j}^{s,k}}
        (\lambda_{i,j}^{s,k,a})^{-1}\alpha_s.
\end{equation}
An empty join contributes the one-member cover
$\{X_{j,k}\}$.  In particular $\alpha_{i,i,s}=\alpha_s$.

\begin{lemma}[Monotonicity of a propagated cover rate]
\label{lem:ah-cover-monotonicity}
For a fixed stage $i$ and cover $\alpha$, the finite numbers
\[
 b_j(\alpha)=\max_{1\leq k\leq h_j}
      \frac{\mathcal D(\alpha_{i,j,k})}{n_{j,k}},
 \qquad j\geq i,
\]
form a nonincreasing sequence.  Consequently
$d_i(\alpha)=\lim_{j\to\infty}b_j(\alpha)$ exists in $[0,\infty)$.
\end{lemma}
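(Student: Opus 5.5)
We compare $b_{j+1}(\alpha)$ with $b_j(\alpha)$ directly, by expressing the propagated cover at stage $j+1$ in terms of the propagated covers at stage $j$. Once we show $b_{j+1}(\alpha)\le b_j(\alpha)$ for every $j\ge i$, the sequence is nonincreasing. Each term is a finite nonnegative number, since $\mathcal D$ of a finite cover is finite and $n_{j,k}\ge 1$. A monotone sequence that is bounded below by $0$ has a limit in $[0,\infty)$, which gives $d_i(\alpha)$.

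\emph{Step 1 (compatibility of covers with composition).} Fix a target summand $X_{j+1,k}$. By Definition~\ref{def:ah}, the occurrences of $\phi_{i,j+1}$ from source $s$ into $k$ are obtained by concatenating paths. Each such occurrence is a composite $\lambda_{i,j}^{s,l,a}\circ\lambda_{j,j+1}^{l,k,b}$, where $l$ ranges over the summands at stage $j$, $a$ ranges over $1,\dots,m_{i,j}^{s,l}$, and $b$ ranges over $1,\dots,m_{j,j+1}^{l,k}$. Each occurrence is retained separately. Since preimages commute with joins, \eqref{eq:ah-propagated-cover} gives
\[
 \alpha_{i,j+1,k}=\bigvee_{l=1}^{h_j}\bigvee_{b=1}^{m_{j,j+1}^{l,k}}
 (\lambda_{j,j+1}^{l,k,b})^{-1}\alpha_{i,j,l}.
\]
Empty joins need attention. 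If a summand $l$ contributes no occurrences from stage $i$, then $\alpha_{i,j,l}=\{X_{j,l}\}$, and its preimage is the trivial cover, which has no effect on the join. The one-member convention in the definition is therefore consistent.

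\emph{Step 2 (subadditivity).} We apply the standard inequalities $\mathcal D(\beta\vee\gamma)\le\mathcal D(\beta)+\mathcal D(\gamma)$ and $\mathcal D(f^{-1}\beta)\le\mathcal D(\beta)$ from Section~\ref{sec:common-conventions}. Together with Step 1 they give
\[
 \mathcal D(\alpha_{i,j+1,k})\le\sum_{l}m_{j,j+1}^{l,k}\,\mathcal D(\alpha_{i,j,l})
 \le\sum_l m_{j,j+1}^{l,k}\,n_{j,l}\,b_j(\alpha).
\]
The rank identity \eqref{eq:ah-rank-identity} for the adjacent pair $(j,j+1)$ says that $\sum_l m_{j,j+1}^{l,k}n_{j,l}=n_{j+1,k}$. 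Dividing by $n_{j+1,k}$ and taking the maximum over $k$ yields $b_{j+1}(\alpha)\le b_j(\alpha)$.

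\emph{Expected obstacle.} The only delicate point is Step 1. We must check that the composed occurrence lists and their multiplicities are bookkept exactly as the concatenation in Definition~\ref{def:ah} prescribes, so that $m_{i,j+1}^{s,k}=\sum_l m_{j,j+1}^{l,k}m_{i,j}^{s,l}$ and the joins match term by term. We also need to confirm that the case where some $m_{i,j}^{s,l}$ vanish is handled correctly by the trivial-cover convention. The remaining steps are routine.
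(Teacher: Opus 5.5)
Your proof is correct and is essentially the paper's argument: you write the later propagated cover as a join of pullbacks of the stage-$j$ propagated covers, apply subadditivity and the pullback inequality, and finish with the rank identity. The only difference is that the paper compares an arbitrary later stage $\ell\geq j$ directly with stage $j$, whereas you compare adjacent stages. That makes no difference to the argument.
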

\begin{proof}
For $\ell\geq j\geq i$, coherence of the occurrence lists gives
\[
 \alpha_{i,\ell,t}
  =\bigvee_{k=1}^{h_j}\ \bigvee_{a=1}^{m_{j,\ell}^{k,t}}
      (\lambda_{j,\ell}^{k,t,a})^{-1}\alpha_{i,j,k}.
\]
Subadditivity and the pullback inequality yield
\[
 \mathcal D(\alpha_{i,\ell,t})
 \leq\sum_k m_{j,\ell}^{k,t}
             \mathcal D(\alpha_{i,j,k})
 \leq b_j(\alpha)\sum_km_{j,\ell}^{k,t}n_{j,k}
 =b_j(\alpha)n_{\ell,t}.
\]
Taking the maximum over $t$ proves $b_\ell(\alpha)\leq b_j(\alpha)$.
The sequence is nonnegative and bounded by $b_i(\alpha)<\infty$.
\end{proof}

\begin{definition}[Mean dimension of the diagonal presentation]
\label{def:gamma}
Following \cite[Definition~3.6]{NiuAH}, put
\[
 \gamma_i(\mathcal P)=\sup_{\alpha}d_i(\alpha)
  =\sup_{\alpha}\lim_{j\to\infty}
       \max_k\frac{\mathcal D(\alpha_{i,j,k})}{n_{j,k}},
 \qquad
 \gamma(\mathcal P)= \mathrm{mdim}(A_i, \phi_{i, i+1})=\sup_i\gamma_i(\mathcal P).
\]
The supremum is over the finite open covers just described.  The
normalizing number is the rank of the unit projection, not the size
$r_{j,k}$ of its ambient matrix algebra.  These quantities may be
infinite, although $d_i(\alpha)$ is finite for each fixed cover.

The sequence $\gamma_i(\mathcal P)$ is nondecreasing: the propagated
tuple $(\alpha_{i,i+1,k})_k$ is a cover at stage $i+1$, and its later
propagations are exactly those of $\alpha$.  Hence equivalently
$\gamma(\mathcal P)=\lim_i\gamma_i(\mathcal P)$.  We retain the
presentation in the notation; no invariance under changing the
presentation is assumed in the definition.
\end{definition}

\begin{lemma}[Uniform growth of matrix ranks]\label{lem:rank-growth}
If the unital limit $A$ is simple and infinite dimensional, then
\[
 \min_{1\leq s\leq h_i}n_{i,s}\longrightarrow\infty.
\]
This conclusion does not require injectivity of the connecting maps.
\end{lemma}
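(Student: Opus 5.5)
The plan is to argue by contradiction: if the minimal rank does not tend to infinity, simplicity forces finite-dimensionality. First observe that the sequence $\mu_i=\min_s n_{i,s}$ is nondecreasing. Indeed, by \eqref{eq:ah-rank-identity} we have $n_{j,k}=\sum_s m_{i,j}^{s,k}n_{i,s}$. Since the maps are unital and $E_{j,k}$ has positive rank, each target summand $k$ receives at least one occurrence. Hence $n_{j,k}\geq\min_s n_{i,s}$. So if $\mu_i\not\to\infty$, then $\mu_i$ is eventually constant, equal to some $N$. For every large $i$ there is then a summand $k_i$ at stage $i$ with $n_{i,k_i}=N$.

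The key step is to analyse such a minimal summand. Take $j>i$ large, both beyond the stabilization index, and pick $k=k_j$ with $n_{j,k}=N$. The identity $N=\sum_s m_{i,j}^{s,k}n_{i,s}$, together with $n_{i,s}\geq N$, forces exactly one occurrence. That occurrence comes from a single source summand $s$ with $n_{i,s}=N$ and multiplicity one. Therefore the composite map $A_i\to A_j\to A_{j,k}$ is a unital $*$-homomorphism factoring through the single summand $s$, given by $f\mapsto f\circ\lambda$ under a bundle identification.

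Next I would use simplicity to produce a contradiction with infinite dimensionality. The standard route is through the images in the limit. Choose a nonzero element $e\in A_i$ supported on a summand $s'\ne s$, or, if the source summand is the only one, supported on a small open set of $X_{i,s}$ that misses $\lambda(X_{j,k})$. Its image in $A_{j,k}$ is then zero. By simplicity of $A$, the image of $e$ in $A$ is either zero or generates $A$ as an ideal. It generates a proper ideal, namely one contained in the kernel of the compatible quotient onto the $k$-th coordinates, provided such coordinates persist along the system. That requires the minimal summands to form a coherent chain $k_i\to k_{i+1}\to\cdots$. I would arrange this by a König-type compactness argument: there are finitely many summands at each stage, and each minimal summand at stage $j$ has a unique minimal predecessor. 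This yields a coherent thread, and hence a quotient $A\to\varinjlim A_{i,k_i}$. Simplicity makes this quotient injective. Every connecting map along the thread is then of the form $f\mapsto f\circ\lambda$ with rank $N$ constant, so $A$ is a limit of homogeneous algebras of fixed rank $N$. Such an algebra is a subhomogeneous C*-algebra: all its irreducible representations have dimension at most $N$. A simple unital algebra with all irreducible representations of bounded dimension is isomorphic to $M_N$, which is finite-dimensional. This contradicts the hypothesis.

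I expect the main obstacle to be making the thread quotient precise and nonzero without assuming injectivity of the connecting maps. For this I would check that the composite maps along the thread are unital, hence nonzero. The limit of unital maps into nonzero unital algebras is nonzero, so the quotient is a genuine nonzero homomorphism and simplicity applies.
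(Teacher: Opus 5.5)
Your argument is correct, but it takes a different route from the paper. The paper never uses the diagonal structure. It passes to a subsequence of stages that have a summand of one fixed rank $n$, and evaluates at a point of that summand to get unital maps $A_j\to M_n$. It then uses compactness of $\operatorname{Hom}_1(A_i,M_n)$ in the point-norm topology, together with the finite intersection property in $\prod_i\operatorname{Hom}_1(A_i,M_n)$, to obtain a compatible family. This gives a unital $\rho:A\to M_n$, which simplicity makes injective.

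You instead exploit the rank identity \eqref{eq:ah-rank-identity}. It makes $\min_s n_{i,s}$ nondecreasing, and it forces every minimal-rank summand to receive exactly one occurrence, coming from a minimal-rank predecessor. K\"onig's lemma then yields a coherent thread $k_i\to k_{i+1}\to\cdots$. The coordinate projections are compatible with the connecting maps precisely because of that single occurrence. This gives a unital, indeed surjective, homomorphism $A\to\varinjlim A_{j,k_j}$, and simplicity makes it injective.

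\emph{What each route buys.} Yours gives more structural information: monotonicity of the minimal rank, and an explicit homogeneous quotient of $A$ that witnesses the failure. It is, however, tied to the diagonal multiplicity identity. It also relies on one external fact that you should cite or justify: an inductive limit of $N$-homogeneous algebras has all irreducible representations of dimension at most $N$. One standard justification is the polynomial identity $S_{2N}$, which holds on each stage by Amitsur--Levitzki and passes to the limit. The paper's compactness argument needs no such input, and it applies to any unital system of homogeneous corners with bounded-rank summands at infinitely many stages.

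\emph{A step to drop.} The paragraph with the element $e$ plays no role. As phrased it can fail: with a single source summand and a surjective eigenmap, no open set misses $\lambda(X_{j,k})$. The contradiction comes entirely from injectivity of the thread quotient.
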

\begin{proof}
Suppose otherwise.  There are arbitrarily late stages with a summand
of rank at most some fixed integer $N$.  Passing to a subsequence of
those stages, the summand ranks have one constant value
$n\in\{1,\ldots,N\}$.  Choose a point in each such summand and an
identification of its fiber with $\mathbb C^n$.  Evaluation gives a
unital homomorphism $\rho_j:A_j\to M_n$.

For each fixed $i$, all sufficiently late such evaluations restrict
to unital homomorphisms $\rho_j\phi_{i,j}:A_i\to M_n$.
The space $H_i=\operatorname{Hom}_1(A_i,M_n)$ is compact in the
pointwise norm topology: it is a closed subset of the product of
the compact matrix balls
$\prod_{a\in A_i}\{z\in M_n:\|z\|\leq\|a\|\}$, since linearity,
multiplicativity, preservation of adjoints, and preservation of the
unit are closed conditions.  Each $H_i$ is nonempty by the late
evaluations.  In the compact product $\prod_i H_i$, the connecting
relations
\[
 \rho_i=\rho_{i+1}\phi_{i,i+1}
\]
have the finite intersection property: any finitely many relations
are satisfied by restricting one sufficiently late evaluation, with
arbitrary elements of $H_i$ chosen at the remaining stages.
There is therefore a compatible family of unital homomorphisms
$\rho_i:A_i\to M_n$.  It induces a unital homomorphism
$\rho:A\to M_n$ by the universal property of the inductive limit.
Simplicity makes $\rho$ injective, which is impossible for an
infinite-dimensional algebra mapping into $M_n$.
\end{proof}

\begin{lemma}[Finite-dimensional simple limits]\label{lem:finite-limit}
If the simple unital limit of an ordinary diagonal presentation is
finite dimensional, then
\[
 A\cong M_N,\qquad \gamma(\mathcal P)=0,
 \qquad \operatorname{rc}(A)=0.
\]
No injectivity of the connecting maps is required.
\end{lemma}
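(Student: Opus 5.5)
Three facts need proof: $A\cong M_N$, then $\gamma(\mathcal P)=0$, then $\operatorname{rc}(A)=0$.

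\emph{Step 1: $A\cong M_N$.} A finite-dimensional simple unital C*-algebra is a full matrix algebra, so $A\cong M_N$ for some $N$. No injectivity of the maps is used here.

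\emph{Step 2: the summand ranks stabilize at $N$.} The argument of Lemma~\ref{lem:rank-growth} is reused in reverse. Let $\phi_{j,\infty}:A_j\to A$ be the canonical map. For large $j$ its restriction to each summand that survives in the limit is a unital homomorphism from a rank-$n_{j,s}$ corner into $M_N$. Evaluation at a point, conjugated by $\phi_{j,\infty}$, gives $n_{j,s}=N$ for every surviving summand. Summands that die in the limit can be discarded from the asymptotic computation: their images under $\phi_{j,\ell}$ are zero for large $\ell$, so they contribute no occurrences to later stages.

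To see this uniformly I would use the rank identity~\eqref{eq:ah-rank-identity}. Once every later summand has rank $N$, each later summand $k$ receives occurrences totaling rank $N$. Since ranks are positive integers, only boundedly many occurrences appear. In fact, for large $j\le \ell$, the relation $n_{\ell,k}=N$ together with $\sum_s m^{s,k}_{j,\ell}n_{j,s}=N$ and $n_{j,s}\ge1$ gives
\[
 \sum_s m^{s,k}_{j,\ell}\le N .
\]
An alternative route here is Lemma~\ref{lem:rank-growth}, whose contrapositive shows the ranks do not tend to infinity. I would instead show directly that $\max_k n_{\ell,k}$ is bounded. The homomorphism $A_\ell\to A\cong M_N$ is injective on the surviving part, and a summand with $n_{\ell,k}>N$ cannot embed in $M_N$. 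A dead summand at stage $\ell$ either has no later occurrences or maps into dead summands. Here simplicity is used to identify the later stages with $M_N$, and I expect making this step clean to be the main obstacle.

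\emph{Step 3: $\gamma(\mathcal P)=0$.} Fix a stage $i$ and a cover $\alpha$. For surviving $k$ at large $\ell$, the propagated cover $\alpha_{i,\ell,k}$ is a join of at most $N$ pullbacks of the covers $\alpha_s$. Hence
\[
 \DD(\alpha_{i,\ell,k})\le N\max_s\DD(\alpha_s),
\]
which is bounded, but the normalizing rank is also only $N$, so this bound alone does not give zero. I would instead use the given occurrence composition. The limit $d_i(\alpha)$ from Lemma~\ref{lem:ah-cover-monotonicity} is the infimum of $b_j(\alpha)$.

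Since $A\cong M_N$ is already attained at a finite stage, the diagonal part of $A$ is $\C^N$. Eventually each eigenmap factors through the image of the spectrum in the limit, which is finite. Concretely, simplicity forces each eigenmap $\lambda_{i,\ell}^{s,k,a}$ to be asymptotically constant. Otherwise the function $f\circ\lambda$ would give a nonscalar central element at the surviving stage, while $A$ has trivial center. So for $\ell$ large, all the $\lambda$'s have images inside a single member of $\alpha_s$. Then $\alpha_{i,\ell,k}$ is refined by $\{X_{\ell,k}\}$ and $\DD=0$, and the sup over $\alpha$ and $i$ gives $\gamma(\mathcal P)=0$.

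\emph{Step 4: $\operatorname{rc}(A)=0$.} In $M_N$ Cuntz comparison is given by rank. With the unique trace extended by the unnormalized matrix trace, $d_\tau(b)=\operatorname{rank}(b)/N$. If $d_\tau(b_0)<d_\tau(b_1)$ then $\operatorname{rank} b_0<\operatorname{rank} b_1$, hence $b_0\bpre b_1$. So $r=0$ is a comparison bound and $\operatorname{rc}(A)=0$.
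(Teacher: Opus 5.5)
Your Steps 1 and 4 are correct and match the paper. Step 3 is the only nontrivial part, and it has a genuine gap.

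\textbf{Why your argument for Step 3 fails.} You claim that ``otherwise $f\circ\lambda$ would give a nonscalar central element while $A$ has trivial center.'' This does not give a contradiction, for two reasons.
\begin{enumerate}
\item For central $f\in A_i$, the element $\phi_{i,\ell}(f)$ is only diagonal in the bundle decomposition. It is not central in $A_\ell$.
\item Even a genuinely central, nonconstant element of $A_\ell$ may have scalar image in $A$. The map $A_\ell\to A\cong M_N$ is a finite-dimensional representation, so it factors through evaluation at finitely many points.
\end{enumerate}
Two further statements in Step 3 are also false as written. The eigenmaps need not factor through a finite set at any finite stage; their ranges can be intervals shrinking toward points. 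And $\phi_{i,\infty}$ may evaluate at several distinct points of a single $X_{i,s}$, so there is no single limiting point. What is true is only that each individual range is eventually small, and your argument does not prove this.

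\textbf{Step 2 does not rescue it.} The claim that $A_\ell\to M_N$ is injective on surviving summands is false, since point evaluations on $C(X_{\ell,k})$ are not injective. Discarding dead summands is also not legitimate. The quantity $b_\ell(\alpha)$ is a maximum over \emph{all} blocks of stage $\ell$, and blocks that die at the next stage, of arbitrary rank, can occur at every stage of a noninjective presentation. Your argument says nothing about their covers.

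\textbf{The missing idea is the paper's norm argument.}
\begin{enumerate}
\item Decompose $\phi_{i,\infty}:A_i\to M_N$ into point evaluations. This gives finite, possibly empty, sets $S_s\subset X_{i,s}$.
\item Choose pairwise disjoint neighbourhoods of the points of $S_s$, each inside a member of $\alpha_s$, with union $U_s$. Take $f_s$ equal to $0$ on $S_s$ and $1$ off $U_s$, and $f_s=1$ when $S_s=\varnothing$.
\item The central element $f=\bigoplus_s f_sp_{i,s}$ satisfies $\phi_{i,\infty}(f)=0$. The inductive-limit norm formula then gives $\|\phi_{i,\ell}(f)\|<1/2$ for all large $\ell$.
\item By the diagonal form, this forces $f_s\circ\lambda<1/2$ for every occurrence into every block of stage $\ell$, dead or alive. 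Hence no occurrences come from summands with $S_s=\varnothing$, and every range lies in $U_s$.
\item Connectedness of $X_{\ell,k}$, which you never use, puts each range into one of the disjoint neighbourhoods, hence into one member of $\alpha_s$. Therefore $X_{\ell,k}\in\alpha_{i,\ell,k}$, so $\mathcal D(\alpha_{i,\ell,k})=0$ for every block, and $\gamma(\mathcal P)=0$.
\end{enumerate}
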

\begin{proof}
A finite-dimensional simple unital algebra is a full matrix algebra.
Fix a stage $i$ and a cover $\alpha=(\alpha_s)_s$.  The
finite-dimensional representation $\phi_{i,\infty}:A_i\to M_N$
decomposes as a finite direct sum of point-evaluation representations.
For each source summand let $S_s\subset X_{i,s}$ be its finite set
of evaluation points; this set can be empty.  If $S_s$ is nonempty,
choose pairwise disjoint open neighborhoods of its distinct points,
each contained in a member of $\alpha_s$, and let $U_s$ be their
union.  Choose $f_s:X_{i,s}\to[0,1]$ continuous, zero on $S_s$
and one on $X_{i,s}\setminus U_s$.  If $S_s$ is empty, take
$f_s=1$.  The central positive element
\[
 f=\bigoplus_s f_sp_{i,s}\in A_i
\]
has image zero in $A$.  The inductive-limit norm formula gives
$\|\phi_{i,j}(f)\|\to0$, also for noninjective systems.

For all sufficiently late $j$ this norm is less than $1/2$.
The diagonal description then implies
\[
 f_s(\lambda_{i,j}^{s,k,a}(x))<1/2
 \qquad(x\in X_{j,k})
\]
for every eigenmap occurrence.  There are consequently no occurrences
from a source summand with $S_s=\varnothing$.  Every remaining
eigenmap has its range in $U_s$.  Since $X_{j,k}$ is connected,
that range lies in one of the pairwise disjoint neighborhoods making
up $U_s$, and hence in one member of $\alpha_s$.  Its pullback
cover therefore contains the whole set $X_{j,k}$ as a member.
The joined cover $\alpha_{i,j,k}$ also contains $X_{j,k}$, so
$\mathcal D(\alpha_{i,j,k})=0$ for every target summand at all
sufficiently late stages.  Thus $d_i(\alpha)=0$ for every $i$ and
$\alpha$, proving $\gamma(\mathcal P)=0$.

Finally, in matrix algebras Cuntz comparison of positive elements is
exactly comparison of their ranks.  The unique normalized trace on
$M_N$, extended with unnormalized matrix traces to amplifications,
has dimension function $\operatorname{rank}(a)/N$.  The strict
trace inequality therefore implies Cuntz subequivalence, and
$\operatorname{rc}(M_N)=0$.
\end{proof}

\section{Comparison witnesses and the torus bundle}
\label{sec:ah-analytic}

We work with a unital diagonal AH system
\[
 A=\varinjlim(A_i,\phi_{i,j}),\qquad
 A_i=\bigoplus_s p_{i,s}M_{r_{i,s}}(C(X_{i,s}))p_{i,s},
\]
where the base spaces are compact metrizable and every unit projection
has positive constant rank.  Write $E_{i,s}=\operatorname{ran}p_{i,s}$
and $n_{i,s}=\rank p_{i,s}$.  We use normalized traces on $A$ and
unnormalized traces on external matrix amplifications.

Fix a source stage $j$.  For brevity write $X_s=X_{j,s}$,
$p_s=p_{j,s}$, $d_s=r_{j,s}$, and $n_s=n_{j,s}$.  A diagonal decomposition
of $\phi_{j,h}$ has eigenmap occurrences
\[
 \lambda_{s,k,\alpha}:X_{h,k}\longrightarrow X_s,
 \qquad 1\leq\alpha\leq m_{j,h}^{s,k}.
\]
Occurrences are indexed separately even when their eigenmaps coincide.
The diagonal decomposition gives a continuous unitary identification
\[
 E_{h,k}\cong
 \bigoplus_s\bigoplus_{\alpha=1}^{m_{j,h}^{s,k}}
                  \lambda_{s,k,\alpha}^*E_{j,s},
\]
under which the homomorphism acts by pullback on each summand.  In
particular,
\[
 n_{h,k}=\sum_s m_{j,h}^{s,k}n_s.
\]
We use the induced identifications on external amplifications as well.
This is the global diagonal structure used below; an eigenvalue pattern
alone, with additional twisted multiplicity bundles, would not supply
these identifications.

\begin{lemma}[Trivial frames in amplified corners]\label{lem:ah-frames}
Let $P\in M_d(C(X))$ be a continuous projection of constant rank
$n>0$, and let $E=\operatorname{ran}P$.  For every integer
$b\geq0$ and every $D\geq db$, the bundle $E^{\oplus D}$
contains a continuously framed trivial subbundle of rank $b$.
Equivalently, there is a continuous matrix
\[
 J:X\longrightarrow M_{Dd,b}(\C),\qquad
 J(x)^*J(x)=1_b,\quad P(x)^{\oplus D}J(x)=J(x).
\]
Consequently finitely many source blocks and finitely many prescribed
frame ranks can be accommodated in one common external amplification.
\end{lemma}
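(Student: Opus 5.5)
Consider the direct sum bundle $E^{\oplus D}$, viewed as the range of the projection $P^{\oplus D}$ in the trivial bundle $X\times\C^{Dd}$. Since $P$ has constant rank $n\geq1$, this bundle has rank $Dn\geq D\geq db$. I will not appeal to general stable-range theorems for vector bundles, which would need dimension hypotheses on $X$ that we do not have. Instead, the plan is to build the frame explicitly from the $d$ coordinate sections of $E$.

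For $e_1,\dots,e_d$ the standard basis of $\C^d$, the vectors $P(x)e_1,\dots,P(x)e_d$ span $E_x$ at every point. Moreover
\[
 \sum_{\ell=1}^d (Pe_\ell)(Pe_\ell)^* = P\Bigl(\sum_\ell e_\ell e_\ell^*\Bigr)P = P .
\]
Group the $D$ copies of $E$ into $b$ disjoint groups of $d$ copies each; this is possible since $D\geq db$, and any remaining copies are left unused. For the $r$th group, define the section of $E^{\oplus D}$ whose component in the $\ell$th copy of that group is $n^{-1/2}P(x)e_\ell$, and whose other components are zero.

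Call these $b$ sections $J(x)e_1,\dots,J(x)e_b$. Sections from different groups have disjoint supports among the copies, so they are orthogonal. Each has squared norm
\[
 n^{-1}\sum_\ell\|Pe_\ell\|^2=n^{-1}\Tr P(x)=1 .
\]
Hence $J(x)^*J(x)=1_b$, and $J$ is continuous because $P$ is. Each column lies in the range of $P^{\oplus D}$, so $P(x)^{\oplus D}J(x)=J(x)$. This is exactly the required framed trivial subbundle of rank $b$.

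For the consequence, take finitely many blocks $(P_s,d_s)$ with prescribed ranks $b_s$, and choose one $D\geq\max_s d_sb_s$. The construction above then applies in the common amplification for every block simultaneously.

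The only delicate point is the normalization. The naive choice $P e_\ell$ placed in a single copy gives sections that are not orthonormal. Spreading each section across a whole group of $d$ copies makes the Gram matrix equal to $\Tr P/n$ times the identity, which is the step requiring $D\geq db$ rather than $D\geq b$.
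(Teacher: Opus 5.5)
Your proposal is correct and is essentially the paper's proof: the normalized section $n^{-1/2}(P(x)e_1,\ldots,P(x)e_d)\in E_x^{\oplus d}$, which has norm one because $\sum_\ell\|P(x)e_\ell\|^2=\Tr P(x)=n$, is placed in $b$ disjoint groups of $d$ summands to give an orthonormal frame in $E^{\oplus D}$. The common amplification is then obtained by taking $D$ to be the maximum of the finitely many required sizes $d_sb_s$.
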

\begin{proof}
If $e_1,\ldots,e_d$ is the standard basis of $\C^d$, the section
\[
 \sigma(x)=\frac1{\sqrt n}
       (P(x)e_1,\ldots,P(x)e_d)\in E_x^{\oplus d}
\]
is continuous and has norm one, since
\[
 \sum_{a=1}^d\|P(x)e_a\|^2=\operatorname{Tr}(P(x))=n.
\]
Place $b$ copies of this section in disjoint groups of $d$ summands.
They form an orthonormal frame in $E^{\oplus db}$; adding
zero coordinates gives the stated frame in any larger amplification.
For $b=0$ the assertion is vacuous.  A maximum of the finitely many
required amplification sizes gives a common choice.
\end{proof}

\begin{lemma}[A comparison witness at one finite stage]\label{lem:ah-lift}
Let $a\in M_{D_0}(A_j)_+$ and $b\in M_{D_1}(A_j)_+$.  If their
images satisfy $a\precsim b$ in $M_\infty(A)$, then for every
$\epsilon>0$ and every prescribed index $h_0\geq j$ there are
$h\geq h_0$ and a rectangular matrix
\[
 V\in M_{D_1,D_0}(A_h)
\]
such that
\[
 \big\|V^*\phi_{j,h}(b)V-\phi_{j,h}(a)\big\|<\epsilon.
\]
The statement does not require injective connecting maps.  Once this
inequality holds, it continues to hold after pushing the same witness
to any later stage.
\end{lemma}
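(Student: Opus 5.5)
The plan is to use the standard characterization of Cuntz subequivalence by approximate conjugation, then move the witness from the limit back to a finite stage using the inductive-limit norm formula. Write $\phi_{j,\infty}:A_j\to A$ for the canonical map, and use the same notation on matrix amplifications. Since $\phi_{j,\infty}(a)\precsim\phi_{j,\infty}(b)$ in $M_\infty(A)$, and we may regard both as lying in $M_{D}(A)$ with $D=\max(D_0,D_1)$, the usual definition of Cuntz subequivalence gives $w\in M_{D}(A)$ with
\[
 \|w^*\phi_{j,\infty}(b)w-\phi_{j,\infty}(a)\|<\epsilon/3 .
\]
Compressing by the corner projections, we may take $w$ to be a $D_1\times D_0$ rectangular matrix over $A$. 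Here $b$ sits in the upper-left $D_1\times D_1$ block and $a$ in the upper-left $D_0\times D_0$ block. Replacing $w$ by its $(D_1,D_0)$ corner does not change the relevant product.

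The next step is to approximate $w$ by an element from a finite stage. The union $\bigcup_h\phi_{h,\infty}(A_h)$ is dense in $A$; this uses only the definition of the inductive limit, not injectivity. So we can choose $h_1\geq h_0$ and $V_1\in M_{D_1,D_0}(A_{h_1})$ with $\phi_{h_1,\infty}(V_1)$ so close to $w$ that, by a standard $\|x\|(\|w\|+\|x\|)$-type estimate,
\[
 \|\phi_{h_1,\infty}(V_1^*\phi_{j,h_1}(b)V_1-\phi_{j,h_1}(a))\|<2\epsilon/3 .
\]
The element inside $\phi_{h_1,\infty}$ lies in $M_{D_0}(A_{h_1})$. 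We then apply the inductive-limit norm formula $\|\phi_{h_1,\infty}(x)\|=\lim_{h}\|\phi_{h_1,h}(x)\|$, which holds for noninjective systems as well. This yields $h\geq h_1$ with $\|\phi_{h_1,h}(\cdot)\|<\epsilon$. Setting $V=\phi_{h_1,h}(V_1)$ and using that $\phi_{h_1,h}$ is a homomorphism with $\phi_{h_1,h}\phi_{j,h_1}=\phi_{j,h}$, we obtain the displayed inequality.

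For the final remark, suppose the inequality holds at stage $h$ and let $h'\geq h$. Applying the contractive homomorphism $\phi_{h,h'}$ (amplified) to the element $V^*\phi_{j,h}(b)V-\phi_{j,h}(a)$ gives the same inequality at stage $h'$ with witness $\phi_{h,h'}(V)$, since homomorphisms do not increase norm.

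The only delicate point is bookkeeping rather than a real obstacle. One has to make sure that the density and norm-limit facts are used in their noninjective forms, so that the quotient seminorm is used and not an embedding, and that the rectangular sizes are handled by corner compression.
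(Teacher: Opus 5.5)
Your proposal is correct and follows essentially the same route as the paper's proof. Both take a rectangular witness in the limit as a corner of a zero-padded witness, approximate it from a finite stage, use the inductive-limit norm formula (valid without injectivity) to reach a stage $h$, and invoke contractivity for later stages; the only cosmetic difference is that you impose $h_1\geq h_0$ already at the density step, while the paper imposes $h\geq\max\{l,h_0\}$ at the norm-formula step.
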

\begin{proof}
Denote the canonical maps into the limit by $\iota_i$.  Cuntz
subequivalence gives a rectangular $v\in M_{D_1,D_0}(A)$ with
\[
 \|v^*\iota_j(b)v-\iota_j(a)\|<\epsilon/3.
\]
One may obtain this rectangular matrix by taking the source and target
corners of a witness in a common matrix amplification, with $a$ and
$b$ padded by zeros.  Approximate $v$ by $\iota_l(v_l)$ for some
$l\geq j$ and $v_l\in M_{D_1,D_0}(A_l)$, sufficiently closely that
\[
 \|\iota_l(v_l)^*\iota_j(b)\iota_l(v_l)-\iota_j(a)\|
 <2\epsilon/3.
\]
Put $c_l=v_l^*\phi_{j,l}(b)v_l-\phi_{j,l}(a)$.  The
inductive-limit norm formula, valid also for noninjective systems, is
\[
 \|\iota_l(c_l)\|=\lim_{h\to\infty}\|\phi_{l,h}(c_l)\|.
\]
Choose $h\geq\max\{l,h_0\}$ with the last norm less than
$\epsilon$ and set $V=\phi_{l,h}(v_l)$.  Contractivity of all
subsequent connecting maps proves the final assertion.
\end{proof}

\begin{lemma}[A bundle monomorphism from diagonal comparison]
\label{lem:ah-embedding}
Let $r\geq0$ be a comparison bound for $A$.  Fix $p\geq1$,
$t=2p+1$, and $0<a<1/16$, and use the torus data
$L,\mathcal H,Q$ of Theorem~\ref{input:compression}.  At source stage
$j$, fix integers $g_s,R_s\geq0$ and continuous packet maps
\[
 F_{s,\ell}:X_s\longrightarrow C_t=[-1,1]^t,
 \qquad 1\leq\ell\leq g_s,
\]
such that
\begin{equation}\label{eq:ah-source-trace-gap}
 \frac{R_s-pg_s}{n_s}>r\qquad\text{for every source block }s.
\end{equation}
There is a witness stage $h\geq j$, which may be required to lie
beyond any prescribed index, with the following property.  For each
block $k$ of $A_h$, put
\[
 N_k=\sum_s m_{j,h}^{s,k}g_s,
 \qquad q_k=\sum_s m_{j,h}^{s,k}R_s.
\]
For $N_k\geq1$, form $f_k:X_{h,k}\to C_t^{N_k}$ by concatenating
\[
 F_{s,\ell}\circ\lambda_{s,k,\alpha},
 \qquad (s,\alpha,\ell),\quad
 1\leq\alpha\leq m_{j,h}^{s,k},\quad1\leq\ell\leq g_s,
\]
in a fixed occurrence order.  Then there is one continuous complex
bundle monomorphism
\begin{equation}\label{eq:ah-entire-locus-embedding}
 \mathcal E_{f_k}\longrightarrow
 \mathcal T(f_k,Z_{aN_k}(t,N_k))\times
             \C^{\,q_k+2p\lceil aN_k\rceil}.
\end{equation}
Here the source has rank $pN_k$, and the monomorphism is defined
simultaneously on the entire sparse torus locus.  Thus
Theorem~\ref{input:compression} applies with $Y=X_{h,k}$, $n=N_k$,
and $f=f_k$ whenever
\[
 q_k+2p\lceil aN_k\rceil<2pN_k.
\]
\end{lemma}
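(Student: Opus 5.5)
The plan is to produce a single comparison in $A$ between a positive element built from the torus projection field along the packet maps and a trivial projection of rank $R_s$ in each source block. Lemma~\ref{lem:ah-lift} then realizes this comparison at a late stage $h$. The sparse slots, where the torus fiber is not seen by the source element, are handled by a fixed weighted compression (Lemma~\ref{lem:weighted-coordinate-compression}) into the extra summand $\C^{2p\lceil aN_k\rceil}$. Everything is written inside the ambient trivial bundle $\C^{2pN_k}$, so continuity in $(y,z)$ is automatic, and injectivity is checked only on the subbundle $\mathcal E_{f_k}$.

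\emph{Step 1 (source elements).} Extend $Q$ continuously to a projection-valued field $\widetilde Q$ of rank $p$ on a closed neighbourhood $W$ of $L$ in $C_t$. Choose a cutoff $\varphi:C_t\to[0,1]$ equal to $1$ on $L$ and supported in $W$. On $X_s$ put
\[
 a_s(x)=\operatorname{diag}_{\ell}\,\varphi(F_{s,\ell}(x))\widetilde Q(F_{s,\ell}(x))\in M_{2pg_s}(C(X_s)).
\]
We regard $a_s$ as an element of an amplification of the corner by tensoring with a trivial frame from Lemma~\ref{lem:ah-frames}. That lemma also gives a trivial projection $b_s$ of rank $R_s$ in $E_s^{\oplus D}$, with one common $D$ for all $s$. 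Let $a=\bigoplus_s a_s$ and $b=\bigoplus_s b_s$. Every trace $\tau$ of $A$ restricts on $A_j$ to a mixture of fibre traces, and $\sum_s\tau(p_s)=1$. Since $\operatorname{rank}a_s\le pg_s$ pointwise, this gives
\[
 d_\tau(a)\le\sum_s\tau(p_s)\frac{pg_s}{n_s},\qquad d_\tau(b)=\sum_s\tau(p_s)\frac{R_s}{n_s}.
\]
By \eqref{eq:ah-source-trace-gap} and compactness of $T(A)$ we get $d_\tau(a)+r<d_\tau(b)$, hence $a\bpre b$ in $A$. Lemma~\ref{lem:ah-lift} with small $\epsilon$ (say $\epsilon<1/4$) yields $h$, beyond any prescribed index, and $V\in M_{D_1,D_0}(A_h)$ with $\|V^*\phi_{j,h}(b)V-\phi_{j,h}(a)\|<\epsilon$. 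Replacing $V$ by $\phi_{j,h}(b)V$, we may assume $V$ lands in the range of $\phi_{j,h}(b)$.

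\emph{Step 2 (reading off the diagonal structure).} Under the global identification of $E_{h,k}$, the element $\phi_{j,h}(b)$ restricted to block $k$ is a trivial projection of rank $q_k$. It is an orthogonal sum of rank-$R_s$ frames, one for each occurrence, so its range is a trivial bundle which we identify with $\C^{q_k}$ by a continuous frame. Likewise $\phi_{j,h}(a)$ at $x\in X_{h,k}$ is $\operatorname{diag}_\nu\varphi(f_{k,\nu}(x))\widetilde Q(f_{k,\nu}(x))$, with $\nu$ running over the $N_k$ slots $(s,\alpha,\ell)$. Compressing $V$ by these frames gives a continuous $W_k:X_{h,k}\to M_{q_k,2pN_k}$, whose $\nu$th column block we denote $W_{k,\nu}$, with
\[
 \|W_k(x)\xi\|^2\ge\langle\phi_{j,h}(a)(x)\xi,\xi\rangle-\epsilon\|\xi\|^2.
\]
In particular, if $\xi$ is supported on slots with $f_{k,\nu}(x)\in L$ and $\xi_\nu\in\mathcal H_{f_{k,\nu}(x)}$, then $\|W_k\xi\|^2\ge(1-\epsilon)\|\xi\|^2$.

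\emph{Step 3 (the monomorphism).} Let $C:\C^{2pN_k}\to\C^{2p\lceil aN_k\rceil}$ be given by Lemma~\ref{lem:weighted-coordinate-compression} with all $d_\nu=2p$. Let $\chi:[0,\infty)\to[0,1]$ be continuous with $\chi(0)=0$ and $\chi>0$ elsewhere. For $(y,z)$ in the locus and $\xi\in(\mathcal E_{f_k})_{(y,z)}$ define
\[
 \Phi(y,z)\xi=\Big(\textstyle\sum_\nu\varphi(f_{k,\nu}(y))W_{k,\nu}(y)\xi_\nu,\ C\big((\chi(|z_\nu|)\xi_\nu)_\nu\big)\Big)\in\C^{q_k}\oplus\C^{2p\lceil aN_k\rceil}.
 \]
This is continuous on the whole locus. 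To check injectivity, suppose $\Phi(y,z)\xi=0$.

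The vector $(\chi(|z_\nu|)\xi_\nu)_\nu$ is supported on at most $\lfloor aN_k\rfloor$ slots. By the defining property of $C$, every $\xi_\nu$ with $z_\nu\neq0$ vanishes. On the remaining slots $z_\nu=0$, so $f_{k,\nu}(y)\in L$, $\varphi=1$ there, and $\xi_\nu\in\mathcal H_{f_{k,\nu}(y)}$. Step 2 then forces $\xi=0$.

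The main obstacle I expect is this combination step. One has to be sure that the approximate comparison witness really gives a \emph{uniform} injectivity estimate exactly on the slots with $z_\nu=0$, while the slots with $z_\nu\neq0$ are caught entirely by the sparse compression.

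The first worry is that the $W$-term acts with weight $\varphi>0$ on sparse slots as well. I would handle this by splitting $\xi$ only after the $C$-component has been shown to kill the sparse slots, as above.

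The second worry is the choice of cutoff. If $\varphi$ were allowed to be below $1$ on the sparse part, the trace estimate of Step 1 would still hold, since it only uses rank at most $pg_s$. So the only place where $\varphi=1$ matters is on $L$ itself, which is where we need it.
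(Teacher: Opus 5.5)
Your proposal is correct. Steps 1 and 2 coincide with the paper's proof: the same frames from Lemma~\ref{lem:ah-frames}, the same positive elements and tracial gap, one finite-stage witness from Lemma~\ref{lem:ah-lift}, and the frame-compressed matrix $W_k$ with its lower bound on genuine projection ranges. (Compactness of $T(A)$ is not needed for the strict inequality, since the weights $\tau(p_s)$ sum to one over finitely many blocks.)

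Where you genuinely differ is in how the monomorphism is assembled. The paper multiplies the comparison component by a bump $\chi(z_\nu)$ with $\chi(0)=1$ and support in $\{\|z\|_\infty<\delta\}$, and feeds $(\zeta\widetilde Q)(f_{k,\nu}(x))\eta_\nu$ into the witness. It then uses the tubular estimate \eqref{eq:ah-nearby-projection} to see that $\widetilde Q(f_{k,\nu}(x))$ still injects $\mathcal H_{y_\nu}$ when $z_\nu$ is small but nonzero. This yields the exact kernel \eqref{eq:ah-first-map-kernel} of $\Phi_1$ by itself, and the sparse map with weights $1-\chi(z_\nu)$ is the identity on that kernel.

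You put no $z$-cutoff on the comparison side and use sparse weights vanishing precisely at $z_\nu=0$. You then argue sequentially: the compression component alone kills every slot with $z_\nu\neq0$, and on the surviving slots $f_{k,\nu}(y)\in L$. So the witness estimate is needed only on $L$ itself, where $\widetilde Q=Q$. This removes the $\delta$-closeness estimate and the kernel computation entirely, and your factor $\varphi(f_{k,\nu}(y))$ in the first component can even be dropped.

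In exchange, the paper's bump-and-complement arrangement gives a first map whose kernel is known independently of the second. It is also the same scheme reused in Proposition~\ref{prop:allocated-bundle-embedding}, where the comparison component must carry the allocation weights $w_\nu$ anyway.
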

\begin{proof}
\emph{Positive elements with a uniform tracial gap.}
Choose a tubular neighborhood $U$ of $L$ in $(-1,1)^t$, a retraction
$\pi_L:U\to L$, and a continuous function
$\zeta:C_t\to[0,1]$ with compact support in $U$, equal to one on a
neighborhood of $L$.  Put $\widetilde Q=Q\circ\pi_L$ on $U$ and
extend $\zeta\widetilde Q$ by zero to $C_t$.  Compactness gives
$\delta>0$ such that
\begin{equation}\label{eq:ah-nearby-projection}
 y\in L,\quad \|u-y\|_\infty<\delta
 \quad\Longrightarrow\quad
 u\in U,\quad\zeta(u)=1,\quad
 \|\widetilde Q(u)-Q(y)\|<1/2.
\end{equation}
In particular, $\widetilde Q(u)$ is injective on $\mathcal H_y$.

Lemma~\ref{lem:ah-frames} gives common positive amplification sizes $D_0,D_1$
and continuous isometric frames $J_{0,s}$ and $J_{1,s}$ of ranks
$2pg_s$ and $R_s$ in $E_{j,s}^{\oplus D_0}$ and
$E_{j,s}^{\oplus D_1}$, respectively.  Define
\begin{align*}
 (P_0)_s(x)&=J_{0,s}(x)
       \operatorname{diag}_{\ell=1}^{g_s}
       \bigl(\zeta(F_{s,\ell}(x))
                          \widetilde Q(F_{s,\ell}(x))\bigr)
       J_{0,s}(x)^*,\\
 (P_1)_s(x)&=J_{1,s}(x)J_{1,s}(x)^*.
\end{align*}
The first expression uses the zero extension just specified; an empty
diagonal is zero.  Thus $P_0\in M_{D_0}(A_j)_+$ is a contraction,
and $P_1\in M_{D_1}(A_j)$ is a projection.  Their fiber ranks satisfy
\[
 \rank(P_0)_s(x)\leq pg_s,
 \qquad \rank(P_1)_s(x)=R_s.
\]
For a normalized trace $\tau$ on $A$, write its restriction to $A_j$
using component weights $t_s\geq0$, $\sum_s t_s=1$, and probability
measures $\mu_s$ on the blocks of positive weight.  The fiber trace
is normalized by $n_s$.  Functional calculus and bounded convergence
therefore give
\[
 d_\tau(P_0)\leq\sum_s t_s\frac{pg_s}{n_s},
 \qquad d_\tau(P_1)=\sum_s t_s\frac{R_s}{n_s}.
\]
Equation~\eqref{eq:ah-source-trace-gap} implies
$d_\tau(P_0)+r<d_\tau(P_1)$ for every $\tau$.  The comparison bound
gives $P_0\precsim P_1$ in the limit.  By Lemma~\ref{lem:ah-lift},
choose one stage $h$ and one rectangular witness $V$ with
\begin{equation}\label{eq:ah-finite-stage-witness}
 \|V^*\phi_{j,h}(P_1)V-\phi_{j,h}(P_0)\|<1/2.
\end{equation}
There is one such inequality in $M_{D_0}(A_h)$, valid on all its
blocks and all their points; separate witnesses are not chosen for
different sparse supports.

\emph{The finite-dimensional matrix in one target block.}
Fix $k$ with $N_k\geq1$.  Under the diagonal identification, pull back
every column of $J_{0,s}$ and $J_{1,s}$ along every occurrence
$\lambda_{s,k,\alpha}$.  Transporting these columns to the target
unit bundles gives continuous isometric frames
\[
 J_{0,k}:\C^{2pN_k}\longrightarrow
                  E_{h,k}^{\oplus D_0},\qquad
 J_{1,k}:\C^{q_k}\longrightarrow
                  E_{h,k}^{\oplus D_1}.
\]
Different occurrences give orthogonal coordinate groups, even when
the underlying maps coincide.  Writing $f_{k,\nu}$ for the slots of
$f_k$, these frames satisfy
\begin{align*}
 \phi_{j,h}(P_0)_k(x)
 &=J_{0,k}(x)\operatorname{diag}_{\nu=1}^{N_k}
       \bigl(\zeta(f_{k,\nu}(x))\widetilde Q(f_{k,\nu}(x))\bigr)
       J_{0,k}(x)^*,\\
 \phi_{j,h}(P_1)_k(x)&=J_{1,k}(x)J_{1,k}(x)^*.
\end{align*}
Let $V_k(x)$ be the block matrix of the single witness, and set
\[
 W_k(x)=J_{1,k}(x)^*V_k(x).
\]
This takes values in the fixed space $\C^{q_k}$.  Whenever
$\phi_{j,h}(P_0)_k(x)\xi=\xi$, Equation~\eqref{eq:ah-finite-stage-witness}
gives the simultaneous estimate
\begin{equation}\label{eq:ah-simultaneous-injectivity}
 \|W_k(x)\xi\|^2\geq\tfrac12\|\xi\|^2.
\end{equation}
Indeed the square of the left-hand norm is
$\langle V_k(x)^*\phi_{j,h}(P_1)_k(x)V_k(x)\xi,\xi\rangle$.
The estimate holds on the entire direct sum of the selected
eigenvalue-one ranges.

\emph{Alignment on the entire torus locus.}
Put $N=N_k$ and
\[
 \mathcal T_k=\mathcal T(f_k,Z_{aN}(t,N)),\qquad
 y_\nu=f_{k,\nu}(x)-z_\nu\in L.
\]
Choose a continuous $\chi:[-2,2]^t\to[0,1]$ with $\chi(0)=1$ and
support contained in $\{z:\|z\|_\infty<\delta\}$.  On
$\mathcal E_{f_k}=\bigoplus_\nu\mathcal H_{y_\nu}$ define
\[
 \Xi_{(x,z)}(\eta_1,\ldots,\eta_N)
 =J_{0,k}(x)
   \bigl(\chi(z_\nu)(\zeta\widetilde Q)(f_{k,\nu}(x))
                                      \eta_\nu\bigr)_{\nu=1}^N.
\]
This formula is continuous everywhere, since $\zeta\widetilde Q$
is a continuous matrix on the whole cube.  If $\chi(z_\nu)>0$,
Equation~\eqref{eq:ah-nearby-projection} applies to
$u=f_{k,\nu}(x)$ and $y=y_\nu$.  Thus $\zeta(u)=1$ and
$\widetilde Q(u)$ injects $\mathcal H_{y_\nu}$ into the corresponding
projection range.  The image of $\Xi_{(x,z)}$ consequently lies in
the eigenvalue-one subspace of $\phi_{j,h}(P_0)_k(x)$.
Distinct slots occupy distinct frame coordinates.  Therefore
\[
 \Phi_1(x,z)=W_k(x)\Xi_{(x,z)}
\]
is continuous and has the exact fiber kernel
\begin{equation}\label{eq:ah-first-map-kernel}
 \ker\Phi_1(x,z)
 =\bigoplus_{\nu:\chi(z_\nu)=0}\mathcal H_{y_\nu}.
\end{equation}
Here Equation~\eqref{eq:ah-simultaneous-injectivity} prevents
cancellation between distinct nonzero slot contributions.

\emph{One fixed map for all sparse exceptions.}
Using the fixed inclusions $\mathcal H_{y_\nu}\subset\C^{2p}$, put
\[
 \Psi_{(x,z)}(\eta_1,\ldots,\eta_N)
     =\bigl((1-\chi(z_\nu))\eta_\nu\bigr)_{\nu=1}^N
       \in(\C^{2p})^N.
\]
Every possibly nonzero output group has $z_\nu\ne0$, since
$\chi(0)=1$.  The range is consequently supported on at most
$\lfloor aN\rfloor\leq H:=\lceil aN\rceil$ groups.  There is a
single complex linear map
\[
 D:(\C^{2p})^N\longrightarrow\C^{2pH}
\]
injective on every coordinate subspace supported on at most $H$
groups.  This is Lemma~\ref{lem:weighted-coordinate-compression} with every coordinate weight equal to $2p$ and target dimension $2pH$.

Set $\Phi_2=D\Psi$.  Both $\Phi_1$ and $\Phi_2$ are continuous in
$(x,z)$ on the whole locus, including changes of sparse support.
If a vector is killed by $\Phi_1$, its components lie only in slots
with $\chi(z_\nu)=0$, by Equation~\eqref{eq:ah-first-map-kernel}.
On these components $\Psi$ is the identity.  Injectivity of $D$ on
their support therefore implies that a vector killed by both maps
is zero.  Hence $\Phi_1\oplus\Phi_2$ is the single monomorphism in
Equation~\eqref{eq:ah-entire-locus-embedding}.

Finally, $\mathcal T_k$ is compact: the base and sparse parameter set
are compact and $L^N$ is closed.  The source is the pullback of the
rank-$pN$ bundle $\mathcal H_N$.  In any local orthonormal frame its
Gram matrix under $\Phi_1\oplus\Phi_2$ is continuous and positive
definite.  Normalization by the inverse square root gives a
continuous isometric embedding, so the image is a bundle of the
stated constant rank.  If the locus is empty the conclusion is
vacuous.  Whenever the target rank is less than $2pN$, these are
exactly the hypotheses of Theorem~\ref{input:compression}.
\end{proof}

\section{Grouping later eigenmaps and reconstructing whole covers}

\begin{lemma}[All later blocks from finitely many compressed types]
\label{lem:ah-grouping}
Let $A=\varinjlim(A_q,\phi_{q,q+1})$ be a unital inductive
system of homogeneous corners
\[
 A_q=\bigoplus_{s=1}^{h_q}
 p_{q,s}M_{r_{q,s}}(C(X_{q,s}))p_{q,s},
 \qquad \rank(p_{q,s})=n_{q,s}>0,
\]
over compact metrizable spaces.  Assume that the connecting maps are
ordinary globally diagonal maps.  Denote the multiplicity from the
$s$th block of $A_q$ to the $k$th block of $A_{q'}$ by
$m_{q,q'}^{s,k}$, and choose the eigenmaps of composites by composition
of the eigenmaps of the constituent maps.

Fix an integer $t\geq1$, indices $i\leq j\leq h$, and a finite open
cover $\alpha_i$ of the
disjoint union of the $X_{i,s}$, with each cover member contained in one
component.  Write $\alpha_{i,q,s}$ for its joined pullback cover on
$X_{q,s}$ under all eigenmaps from stage $i$ to that block.  For each
block $s$ of stage $j$, put
\[
 m_s=\DD(\alpha_{i,j,s}),
 \qquad
 g_s=\left\lfloor\frac{m_s+1}{t+1}\right\rfloor,
\]
and choose a finite colored nerve $S_s$ of dimension at most $m_s$
and a subordinate map $\psi_s:X_{j,s}\to S_s$ as in
Lemma~\ref{lem:nerve}.  Thus every vertex $w$ of $S_s$ is assigned a
member $V_s(w)\in\alpha_{i,j,s}$ such that
\[
 \lambda_w(\psi_s(x))>0\quad\Longrightarrow\quad x\in V_s(w).
\]
Use the packet maps of Lemma~\ref{lem:packets} on these nerves.

For the $k$th block of stage $h$, set
\begin{align*}
 T_k&=\prod_{s=1}^{h_j}\prod_{a=1}^{m_{j,h}^{s,k}}S_s,
 &
 \tau_k(x)&=
 \bigl(\psi_s(\lambda_{j,h}^{s,k,a}(x))\bigr)_{s,a},\\
 C_k&=\sum_{s=1}^{h_j}m_{j,h}^{s,k}m_s,
 &
 N_k&=\sum_{s=1}^{h_j}m_{j,h}^{s,k}g_s.
\end{align*}
The space $T_k$ carries the finite product-face complex structure, and
$\dim\car(y)$ means the sum of the dimensions of the simplex carriers
of its factors.  The same convention applies to powers of $T_k$.

Suppose $d>2\delta>0$ and, for every $k$, there are an integer
$u_k\geq1$ and a continuous map
\[
 \mathcal R_k:X_{h,k}^{u_k}\longrightarrow T_k^{u_k}
\]
with the following properties.  In every individual $S_s$ factor,
the vertex support of $\mathcal R_k(x_1,\ldots,x_{u_k})$ is contained in the
vertex support of the corresponding factor of
$(\tau_k(x_1),\ldots,\tau_k(x_{u_k}))$.  Moreover,
\[
 \dim\car\bigl(\mathcal R_k(x_1,\ldots,x_{u_k})\bigr)
 \leq (d-2\delta)u_kn_{h,k}
 \qquad\text{for every }(x_1,\ldots,x_{u_k}).
\]
Then, for every $l\geq h$ and every block $v$ of stage $l$,
\begin{equation}\label{eq:ah-grouping-bound}
 \DD(\alpha_{i,l,v})
 \leq (d-2\delta)n_{l,v}+E,
 \qquad
 E=\sum_{k=1}^{h_h}(u_k-1)C_k.
\end{equation}
No injectivity of the connecting maps and no divergence of the
multiplicities of individual block types is required.
\end{lemma}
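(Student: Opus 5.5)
The plan is to fix $l\geq h$ and a block $v$ of stage $l$, and to build one continuous map from $X_{l,v}$ into a finite polyhedron. I will then apply Lemma~\ref{lem:refinement}. The first step is bookkeeping of occurrences. By the coherence convention of Definition~\ref{def:ah}, the eigenmap occurrences from stage $i$ to block $(l,v)$ are exactly the composites of three pieces: an occurrence $(l,v)\to(h,k)$, an occurrence $(h,k)\to(j,s)$, and an occurrence $(j,s)\to(i,\cdot)$. Hence
\[
 \alpha_{i,l,v}=\bigvee_{k}\bigvee_{b=1}^{m_{h,l}^{k,v}}
 \bigvee_{s,a}(\lambda_{j,h}^{s,k,a}\circ\lambda_{h,l}^{k,v,b})^{-1}\alpha_{i,j,s},
\]
and a member of this join is an intersection, over all these $(k,b,s,a)$, of pullbacks of members of $\alpha_{i,j,s}$. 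For each $k$, I split the $m_{h,l}^{k,v}$ occurrences $b$ into $\lfloor m_{h,l}^{k,v}/u_k\rfloor$ consecutive groups of size $u_k$, plus fewer than $u_k$ leftover occurrences.

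Next I define the map $\Phi$ on $X_{l,v}$. On each full group $(b_1,\ldots,b_{u_k})$, use $\mathcal R_k(\lambda_{h,l}^{k,v,b_1}(x),\ldots,\lambda_{h,l}^{k,v,b_{u_k}}(x))\in T_k^{u_k}$. On each leftover occurrence $b$, use $\tau_k(\lambda_{h,l}^{k,v,b}(x))\in T_k$. The target is a finite product of copies of the $S_s$, one factor for each quadruple $(k,b,s,a)$, with the product-face cell structure. I take $P$ to be the union of the closed product cells of dimension at most $(d-2\delta)n_{l,v}+E$; it is triangulable without increasing dimension. To see $\Phi(X_{l,v})\subset P$, note that each full group contributes at most $(d-2\delta)u_kn_{h,k}$ by hypothesis. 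Each leftover occurrence contributes at most $\dim T_k\leq C_k$. Summing and using $n_{l,v}=\sum_k m_{h,l}^{k,v}n_{h,k}$ from \eqref{eq:ah-rank-identity}, the total is at most $(d-2\delta)n_{l,v}+\sum_k(u_k-1)C_k$. Here $d-2\delta>0$ lets me bound the full-group part by the whole rank sum.

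The main step is the neighborhood condition of Lemma~\ref{lem:refinement}. Given $y=\Phi(x_0)$, I choose in each $S_s$ factor $(k,b,s,a)$ one vertex $w$ with $\lambda_w>0$ at $y$. I let $O$ be the open set of points of $P$ where all these finitely many coordinates stay positive. Take $x\in\Phi^{-1}(O)$. By the vertex-support hypothesis on $\mathcal R_k$ (trivially true for the leftover factors), $\lambda_w(\psi_s(\lambda_{j,h}^{s,k,a}\lambda_{h,l}^{k,v,b}(x)))>0$. By Lemma~\ref{lem:nerve}, the point $\lambda_{j,h}^{s,k,a}\lambda_{h,l}^{k,v,b}(x)$ then lies in $V_s(w)\in\alpha_{i,j,s}$. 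Thus $\Phi^{-1}(O)$ lies in the intersection of the corresponding pullbacks. By the displayed join formula this intersection is a member of $\alpha_{i,l,v}$. Lemma~\ref{lem:refinement} then gives \eqref{eq:ah-grouping-bound}.

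The obstacle I expect is exactly this identification of members. The grouping mixes different occurrences $b$ inside one argument of $\mathcal R_k$. The containment must therefore be checked factor by factor, relying on the hypothesis that support is controlled in each individual $S_s$ factor. One must also check that empty joins and blocks with no occurrences contribute the one-member cover $\{X_{l,v}\}$ consistently.
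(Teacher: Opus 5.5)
Your proposal is correct and follows the paper's proof essentially step for step. You use the same grouping of the $m_{h,l}^{k,v}$ occurrences into $u_k$-tuples plus fewer than $u_k$ remainders, the same carrier count closed by the unital rank identity and $d-2\delta>0$, and the same factorwise choice of positive vertices to verify the hypothesis of Lemma~\ref{lem:refinement}. The only difference is cosmetic: you cut the full product down to its cells of dimension at most $(d-2\delta)n_{l,v}+E$ directly, whereas the paper first cuts each $T_k^{u_k}$ to the subcomplex $P_k$ of cells of dimension at most $\lfloor(d-2\delta)u_kn_{h,k}\rfloor$ and then uses $\prod_k P_k^{q_k}\times T_k^{r_k}$; both give a finite polyhedron of the required dimension.
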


\begin{proof}
We first turn the pointwise carrier bound into a fixed finite target
complex.  Put
\[
 B_k=\left\lfloor(d-2\delta)u_kn_{h,k}\right\rfloor.
\]
Let $P_k$ be the union of all product-face cells of $T_k^{u_k}$ of
dimension at most $B_k$.  Every face of such a cell has no larger
dimension, so this union is a finite subcomplex.  Its dimension is at
most $B_k$, and the carrier assumption gives
$\mathcal R_k(X_{h,k}^{u_k})\subseteq P_k$.  The complex $P_k$ depends only on
the fixed stage-$h$ type and its chosen compression, not on any later
stage.  Likewise $T_k$ is a finite product-face complex with
$\dim T_k\leq C_k$.  These finite complexes and their products may be
triangulated without increasing dimension.

Fix $l\geq h$ and a target block $v$.  Put
$b_k=m_{h,l}^{k,v}$ and write
\[
 b_k=q_ku_k+r_k,\qquad q_k\geq0,\quad 0\leq r_k<u_k.
\]
Order the $b_k$ eigenmaps from $X_{l,v}$ to $X_{h,k}$ arbitrarily.
Separate them into $q_k$ ordered groups of $u_k$ maps and a remainder
of $r_k$ maps.  On every complete group use the continuous map
\[
 x\longmapsto
 \mathcal R_k\bigl(\lambda_1(x),\ldots,\lambda_{u_k}(x)\bigr)\in P_k.
\]
On each remainder occurrence use the unchanged map
$\tau_k\circ\lambda:X_{l,v}\to T_k$.  Taking the product of all
these maps gives a continuous map
\[
 \Phi_{l,v}:X_{l,v}\longrightarrow
 P_{l,v}:=
 \prod_{k=1}^{h_h}\bigl(P_k^{q_k}\times T_k^{r_k}\bigr).
\]
Factors with exponent zero are omitted.  This is a finite polyhedron,
and
\begin{align*}
 \dim P_{l,v}
 &\leq \sum_k q_k(d-2\delta)u_kn_{h,k}
       +\sum_k r_kC_k\\
 &\leq (d-2\delta)\sum_k b_kn_{h,k}
       +\sum_k (u_k-1)C_k\\
 &= (d-2\delta)n_{l,v}+E.
\end{align*}
The second inequality uses $d-2\delta>0$.  The last equality is the
unital rank identity
$n_{l,v}=\sum_k m_{h,l}^{k,v}n_{h,k}$.  Notice that the remainder
cost is bounded by the fixed constant $E$, even when a type occurs
fewer than $u_k$ times or does not occur at all.

It remains to verify refinement of the entire propagated cover.
Regard $P_{l,v}$ as a subcomplex of the product of the individual
$S_s$ factors corresponding to every occurrence of an eigenmap from
stage $j$ to stage $h$, and then from stage $h$ to the block $(l,v)$.
Choose a point $y\in\Phi_{l,v}(X_{l,v})$.  In each such $S_s$ factor,
choose a vertex $w$ whose barycentric coordinate at $y$ is positive.
The set on which all the finitely many chosen coordinates remain
positive is an open neighborhood $W_y$ of $y$ in $P_{l,v}$.

If $x\in\Phi_{l,v}^{-1}(W_y)$, support containment in every compressed
factor, and equality in every remainder factor, imply
\[
 \lambda_w\bigl(
 \psi_s(\lambda_{j,h}^{s,k,a}(\lambda_{h,l}^{k,v,b}(x)))
 \bigr)>0
\]
for its corresponding occurrence.  Hence
\[
 \lambda_{j,h}^{s,k,a}(\lambda_{h,l}^{k,v,b}(x))
 \in V_s(w).
\]
For each selected $V_s(w)\in\alpha_{i,j,s}$, choose its fixed
description as an intersection of inverse images of members of the
original component covers $\alpha_i$ under the eigenmaps from stage
$i$ to $X_{j,s}$.  Composing those eigenmaps with the two maps displayed
above, and intersecting the resulting assigned members over all
occurrences, produces one member $U_y$ of $\alpha_{i,l,v}$.
By construction,
\[
 \Phi_{l,v}^{-1}(W_y)\subseteq U_y.
\]
In particular, the assertion holds for the whole inverse image of the
target neighborhood, rather than merely for individual fibers or
individual input tuples.  Noninjective eigenmaps and repeated
eigenmaps cause no change to these inclusions.  The chosen composite
eigenmap list is the ordinary diagonal list for $\phi_{i,l}$, up to an
irrelevant permutation, so every occurrence required by the joined
cover is included.

Apply Lemma~\ref{lem:refinement} to $\Phi_{l,v}$ and the finite target
$P_{l,v}$ to obtain \eqref{eq:ah-grouping-bound}.  The construction
works separately for every later block with the same $E$, which
proves the asserted uniformity.
\end{proof}

\begin{remark}
The hypothesis on $\mathcal R_k$ is exactly the form supplied by
Lemma~\ref{lem:packets} after applying boundary-preserving compression
to all $N_k$ packets of the raw product nerve.  A type requiring no
compression can be included by taking $u_k=1$ and $\mathcal R_k=\tau_k$, provided
its raw dimension already satisfies
$C_k\leq(d-2\delta)n_{h,k}$.  The grouping argument uses compression on
the actual spaces $X_{h,k}^{u_k}$ and subsequently restricts along
tuples of later eigenmaps; it requires neither independent eigenmap
coordinates nor comparison over a Cartesian power of a building
space.
\end{remark}

\section{The reverse inequality}

\begin{theorem}\label{thm:lower}
Let $\mathcal P=(A_j,\phi_{j,j+1})$ be an ordinary globally diagonal
AH presentation as in Definition~\ref{def:ah}, with compact metrizable
bases and simple unital infinite-dimensional limit $A$.  For every finite
comparison bound $r>0$ for $A$,
\[
 \gamma(\mathcal P)\leq2r.
\]
Consequently,
\[
 \gamma(\mathcal P)\leq2\operatorname{rc}(A)
 \quad\text{in }[0,\infty].
\]
\end{theorem}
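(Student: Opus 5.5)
We argue by contradiction at the level of a single cover. Fix a finite comparison bound $r>0$, a stage $i$ and a cover $\alpha=(\alpha_s)_s$, and suppose $d:=d_i(\alpha)>2r$. We will show that the limiting rate of $\alpha$ is at most $d-2\delta$ for some $\delta>0$, which contradicts Lemma~\ref{lem:ah-cover-monotonicity}. Since $\alpha$ and $i$ are arbitrary, this gives $\gamma_i(\mathcal P)\le 2r$ for every $i$, hence $\gamma(\mathcal P)\le 2r$. Taking the infimum over comparison bounds gives $\gamma(\mathcal P)\le2\operatorname{rc}(A)$; this is trivial when $\operatorname{rc}(A)=\infty$, and if $\operatorname{rc}(A)=0$ we use every $r>0$.

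\textbf{Choice of parameters.} Choose $\delta>0$ and a large $p$, set $t=2p+1$, and fix $0<a<1/16$ so small that the arithmetic below holds.

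\textbf{Choice of the stage $j$.} By Lemma~\ref{lem:rank-growth} and monotonicity, choose $j\ge i$ with the following properties:
\begin{itemize}
\item $n_{j,s}$ is huge for every $s$;
\item $b_j(\alpha)<d+\delta$, so $m_s=\DD(\alpha_{i,j,s})\le (d+\delta)n_{j,s}$.
\end{itemize}
Set $g_s=\lfloor(m_s+1)/(t+1)\rfloor\approx m_s/(2p+2)$, and take the colored nerves $\psi_s:X_{j,s}\to S_s$ from Lemma~\ref{lem:nerve}. The packet maps $F_{s,\ell}=\mathfrak f_\ell\circ\psi_s$ come from Lemma~\ref{lem:packets}.

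\textbf{Choice of $R_s$.} Take $R_s$ the least integer with $(R_s-pg_s)/n_s>r$, that is $R_s\approx pg_s+rn_s$. Lemma~\ref{lem:ah-embedding} then gives a stage $h$ and, for each block $k$, a monomorphism into a trivial bundle of rank $q_k+2p\lceil aN_k\rceil$. Here $q_k\approx pN_k+rn_{h,k}$ by \eqref{eq:ah-rank-identity}.

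\textbf{Where $d>2r$ is used.} Theorem~\ref{input:compression} needs $q_k+2p\lceil aN_k\rceil<2pN_k$, which amounts roughly to $rn_{h,k}+2paN_k<pN_k$. Blocks whose packet count is proportionally large satisfy this, since $pN_k\approx \tfrac{p}{2p+2}C_k$. If $C_k$ is close to $dn_{h,k}$, the condition holds once $r<\tfrac{p}{2p+2}(1-2a)d$, and this is arranged because $d>2r$ and $p$ is large.

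Blocks with $C_k\le(d-2\delta)n_{h,k}$ need no compression; we take $u_k=1$ and $\mathcal R_k=\tau_k$. The delicate point is to arrange a dichotomy covering every block. Either $C_k\le (d-2\delta)n_{h,k}$, or $C_k$ is large enough that the rank inequality holds with margin. This should follow from choosing $\delta$ small relative to $d-2r$ and $a$.

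\textbf{Compressing the large blocks.} For such $k$, Theorem~\ref{input:compression} yields $G:X_{h,k}^{u_k}\to C_t^{u_kN_k}$ with at least $(a/8)u_kN_k$ boundary slots, preserving the original boundary slots. Apply Lemma~\ref{lem:packets} factorwise to $\tau_k^{\times u_k}$ with inputs $v_j$ the slots of $G$. This gives $\mathcal R_k$ with support containment and carrier dimension at most $u_kC_k-(a/8)u_kN_k$.

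The cost is that we must show this is at most $(d-2\delta)u_kn_{h,k}$. Using $C_k\le(d+\delta)n_{h,k}$ (from $b_h\le b_j$) and $N_k\approx C_k/(2p+2)\gtrsim$ a fixed fraction of $n_{h,k}$ in the large regime, the saving $(a/8)N_k$ beats $3\delta n_{h,k}$ once $\delta\ll a/p$. This creates tension with the previous step, and resolving it is the main obstacle: $\delta$ must be small relative to $a/(16p)$, while $p$ must be large for the comparison condition. I would fix $p$ and $a$ first from $d-2r$, then choose $\delta$.

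\textbf{Conclusion.} Lemma~\ref{lem:ah-grouping} now gives $\DD(\alpha_{i,l,v})\le(d-2\delta)n_{l,v}+E$ for all $l\ge h$. Since $n_{l,v}\to\infty$ uniformly by Lemma~\ref{lem:rank-growth}, we get $d_i(\alpha)\le d-2\delta<d$, the desired contradiction.
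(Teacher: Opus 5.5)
Your proposal is correct and follows essentially the paper's argument: one comparison witness from Lemma~\ref{lem:ah-embedding}, a high/low split of the stage-$h$ blocks, torus compression and packet reconstruction on the high blocks, then Lemma~\ref{lem:ah-grouping} and rank growth, differing only cosmetically in that you take the minimal $R_s\approx pg_s+rn_s$ and split by $C_k\le(d-2\delta)n_{h,k}$, whereas the paper takes $R_s=\lfloor(2-10a)pc\,n_s\rfloor$ and splits at $N_k\geq\rho n_{h,k}$. One small correction: $C_k\le(d+\delta)n_{h,k}$ follows directly from $m_s\le(d+\delta)n_s$ and $n_{h,k}=\sum_s m_{j,h}^{s,k}n_s$ (the intermediate step in the proof of Lemma~\ref{lem:ah-cover-monotonicity}), not from the conclusion $b_h\le b_j$, which only controls the smaller quantity $\DD(\alpha_{i,h,k})$.
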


\begin{proof}
Fix a stage $i$ and a finite componentwise open cover $\alpha_i$.
In the notation of Definition~\ref{def:gamma}, put
\[
 d_j=\max_s
 \frac{\mathcal D(\alpha_{i,j,s})}{n_{j,s}},
 \qquad d=\lim_{j\to\infty}d_j.
\]
The sequence $d_j$ decreases, and each $d_j$ is finite.  It suffices to
show $d\leq2r$, since the supremum over such covers and then stages is
$\gamma(\mathcal P)$.  Suppose, to the contrary, that $d>2r$.

\paragraph{Parameters and primary packets.}
Choose an integer $p\geq1$ such that, with $t=2p+1$ and
$c=d/(t+1)$, one has $pc>r$.  This is possible because
$p/(2p+2)\to1/2$.  Choose $0<a<1/16$ so small that
\[
 G=(1-11a)pc-r>0.
\]
Fix
\[
 \varepsilon=\min\left\{\frac{ac}{4096},\frac{ad}{4096}\right\},
 \qquad \rho=c\left(1-\frac a{512}\right),
 \qquad \delta=\min\left\{\frac{ad}{2048},\frac{ac}{32}\right\}.
\]
All these constants are positive.  In particular, $d-2\delta>0$.

By convergence of $d_j$ and Lemma~\ref{lem:rank-growth}, choose a stage
$j\geq i$ such that $d_j\leq d+\varepsilon$ and
\begin{equation}\label{eq:ah-primary-minimum-rank}
 n_{\min}:=\min_s n_{j,s}\geq
 \max\left\{\frac{4096}{apc},\frac{4096t}{ad},
                         \frac4\varepsilon\right\}.
\end{equation}
For each component of $A_j$, abbreviate $n_s=n_{j,s}$ and set
\[
 \beta_s=\alpha_{i,j,s},\qquad
 m_s=\mathcal D(\beta_s),\qquad
 g_s=\left\lfloor\frac{m_s+1}{t+1}\right\rfloor,
 \qquad
 R_s=\left\lfloor(2-10a)pc\,n_s\right\rfloor.
\]
Choose a colored subordinate nerve map $\psi_s:X_{j,s}\to S_s$ of
dimension at most $m_s$, together with its $g_s$ packet maps
$F_{s,\nu}:X_{j,s}\to C_t$, as in Lemma~\ref{lem:packets}.
The possibility $g_s=0$ is allowed.

Only an upper bound on the primary packet densities is needed:
\begin{align*}
 \frac{g_s}{n_s}
 &\leq \frac{m_s+1}{(t+1)n_s}
 \leq c+\frac{\varepsilon}{t+1}
                  +\frac1{(t+1)n_s}
 \leq c+\frac{5\varepsilon}{4(t+1)}
 \leq c+\varepsilon.
\end{align*}
There is no assertion that individual components have a comparable
lower density.  The target ranks $R_s$ are proportional to $n_s$, rather
than to $g_s$.  This distinction supplies a tracial gap on every
component:
\begin{align*}
 \frac{R_s-pg_s}{n_s}
 &\geq(2-10a)pc-p(c+\varepsilon)-\frac1{n_s}\\
 &\geq(1-10a)pc-\frac{apc}{4096}-\frac{apc}{4096}\\
 &=r+G+apc\left(1-\frac2{4096}\right)>r+G>r.
\end{align*}
Here we used both the choice of $\varepsilon$ and
\eqref{eq:ah-primary-minimum-rank}.  Ranks in these formulas use the
unnormalized trace on matrix amplifications: a rank-$q$ projection over
the $s$th corner has normalized base-trace value $q/n_s$.

\paragraph{One witness and a high--low decomposition.}
Apply Lemma~\ref{lem:ah-embedding} to these packets and ranks.  It gives
one finite-stage comparison witness, and its stage may be advanced
without changing any primary data.  By Lemma~\ref{lem:rank-growth}, take
its stage $h\geq j$ so late that
\begin{equation}\label{eq:ah-witness-minimum-rank}
 \min_k n_{h,k}\geq\frac4{a\rho}.
\end{equation}
For the multiplicities $\mu_{s,k}=m_{j,h}^{s,k}$, put
\[
 N_k=\sum_s\mu_{s,k}g_s,\qquad
 C_k=\sum_s\mu_{s,k}m_s,\qquad
 q_k=\sum_s\mu_{s,k}R_s,\qquad
 v_k=\sum_s\mu_{s,k}.
\]
Unitality gives $n_{h,k}=\sum_s\mu_{s,k}n_s$.  Consequently
\begin{equation}\label{eq:ah-witness-block-budgets}
 \frac{C_k}{n_{h,k}}\leq d+\varepsilon,
 \qquad \frac{v_k}{n_{h,k}}\leq\frac1{n_{\min}},
 \qquad q_k\leq(2-10a)pc\,n_{h,k}.
\end{equation}
The definition of $g_s$ also gives $m_s\leq(t+1)g_s+t$, and hence
\begin{equation}\label{eq:ah-rounding-baseline}
 C_k\leq(t+1)N_k+t v_k.
\end{equation}

Let
\[
 T_k=\prod_s\prod_{\nu=1}^{\mu_{s,k}}S_s,
 \qquad
 \tau_k:X_{h,k}\longrightarrow T_k
\]
be the uncompressed product nerve map with coordinates
$\psi_s\circ\lambda_{j,h}^{s,k,\nu}$.
The baseline dimension bound for $T_k$ is $C_k$.
Call $k$ high if $N_k\geq\rho n_{h,k}$, and low otherwise.

For a low component, \eqref{eq:ah-rounding-baseline} gives
\begin{align*}
 \frac{C_k}{n_{h,k}}
 &<(t+1)\rho+\frac{t}{n_{\min}}\\
 &\leq d\left(1-\frac a{512}\right)+\frac{ad}{4096}
 =d-\frac{7ad}{4096}
 \leq d-\frac72\delta<d-2\delta.
\end{align*}
Thus for low $k$ take $u_k=1$ and $\mathcal R_k=\tau_k$.
No compression theorem is needed on these components.

For a high component, $N_k\geq\rho n_{h,k}\geq4/a>0$ by
\eqref{eq:ah-witness-minimum-rank}.  Lemma~\ref{lem:ah-embedding}
gives a single bundle monomorphism over the entire sparse-parameter
locus for its concatenated packet map $f_k:X_{h,k}\to C_t^{N_k}$,
with target rank $q_k+2p\lceil aN_k\rceil$.  Its ratio to the source
rank $pN_k$ is bounded by
\begin{align*}
 \frac{q_k+2p\lceil aN_k\rceil}{pN_k}
 &\leq \frac{2-10a}{1-a/512}+2a+\frac2{N_k}\\
 &\leq \frac{2-10a}{1-a/512}+\frac52a
 <2-7a<2.
\end{align*}
For the strict inequality, note that
\[
 \frac{2-10a}{1-a/512}
 =2-\frac{(10-1/256)a}{1-a/512}
 <2-(10-1/256)a.
\]
The imported torus compression theorem, Theorem~\ref{input:compression},
therefore supplies an integer $u_k\geq1$ and a continuous map
\[
 \mathcal G_k:X_{h,k}^{u_k}\longrightarrow C_t^{u_kN_k}
\]
with at least $(a/8)u_kN_k$ boundary slots at every point and exact
preservation of every original boundary slot.

Apply Lemma~\ref{lem:packets} to each primary nerve coordinate in each
of the $u_k$ copies, using its corresponding replacement packets from
$\mathcal G_k$.  Boundary preservation verifies the lemma's exact
relative hypothesis.  The resulting continuous map
\[
 \mathcal R_k:X_{h,k}^{u_k}\longrightarrow T_k^{u_k}
\]
has every vertex support contained in that of the corresponding original
coordinate of $\tau_k$.  All the slots here are real primary packets;
there are no padded or exceptional geometric slots.  Its product-cell
carrier dimension is at most
\[
 u_k C_k-\frac a8u_kN_k.
\]
After division by $u_kn_{h,k}$, this is at most
\begin{align*}
 d+\varepsilon-\frac a8\rho
 &\leq d-ac\left(\frac{1-a/512}{8}-\frac1{4096}\right)\\
 &<d-\frac{ac}{16}\leq d-2\delta.
\end{align*}
Indeed $a<1/16$ makes the coefficient in parentheses strictly greater
than $1/16$.  Thus both high and low components satisfy the common
carrier bound $(d-2\delta)u_kn_{h,k}$, with exact coordinatewise vertex
subordination.

\paragraph{All later destination components.}
Apply Lemma~\ref{lem:ah-grouping} to this finite family of maps and
exponents.  It yields the fixed finite remainder cost
\[
 E=\sum_k(u_k-1)C_k
\]
and, for every later stage $l\geq h$ and every destination component
$w$, the estimate
\begin{equation}\label{eq:ah-final-cover-estimate}
 \mathcal D(\alpha_{i,l,w})
 \leq(d-2\delta)n_{l,w}+E.
\end{equation}
This is a bound on every destination component, rather than on an
average of their normalized dimensions.  The arbitrary exponents
$u_k$ cause no restriction: copies of each component are grouped in
$u_k$-tuples, and fewer than $u_k$ copies of each type remain.

Choose $l$ so late that $\min_w n_{l,w}>2E/\delta$, using
Lemma~\ref{lem:rank-growth}.  Equation~\eqref{eq:ah-final-cover-estimate}
then gives
\[
 d_l=\max_w\frac{\mathcal D(\alpha_{i,l,w})}{n_{l,w}}
 <d-\frac32\delta<d.
\]
If $E=0$, the same strict conclusion holds at any sufficiently late
stage.  This contradicts $d_l\geq d$, since $d_l$ decreases to $d$.
The contradiction proves $d\leq2r$.

Finally take the suprema in Definition~\ref{def:gamma}.  This proves
$\gamma(\mathcal P)\leq2r$ for every finite positive comparison bound.
If such bounds exist, taking their infimum gives
$\gamma(\mathcal P)\leq2\operatorname{rc}(A)$; attainment of that
infimum is not required.  In particular, radius zero implies
$\gamma(\mathcal P)=0$.  If zero itself is a comparison bound, every
positive number is also a comparison bound, so the same conclusion
follows.  If there is no finite comparison bound, the right-hand side is
infinite and the inequality is automatic.  Conversely,
$\gamma(\mathcal P)=\infty$ excludes every finite comparison bound,
and hence forces $\operatorname{rc}(A)=\infty$.
\end{proof}

\section{The existing upper bound for noninjective presentations}

\begin{lemma}\label{lem:upper-bound}
For a presentation as in Theorem~\ref{thm:ah-main}, including a
presentation with noninjective connecting maps,
\[
 \operatorname{rc}(A)\leq\tfrac12\gamma(\mathcal P).
\]
\end{lemma}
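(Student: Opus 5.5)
The plan is to reduce the claim to the existing upper bound \cite[Theorem 6.2]{NiuAH}. That theorem is stated for diagonal AH algebras, and the only new point here is that the connecting maps need not be injective. The natural route is to pass to an equivalent presentation with injective maps and the same limit. We must check that $\gamma(\mathcal P)$ does not increase under this passage, since the upper bound is then applied to the new presentation. Two cases can be disposed of at once. If $\gamma(\mathcal P)=\infty$ there is nothing to prove. If $A$ is finite dimensional, Lemma~\ref{lem:finite-limit} gives $\operatorname{rc}(A)=0$. So we assume $A$ is simple and infinite dimensional, with $\gamma(\mathcal P)<\infty$.

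First, the kernel of $\phi_{i,\infty}$ is an ideal of $A_i=\bigoplus_s p_{i,s}M_{r_{i,s}}(C(X_{i,s}))p_{i,s}$. It is therefore of the form $\bigoplus_s p_{i,s}M_{r_{i,s}}(C_0(X_{i,s}\setminus Y_{i,s}))p_{i,s}$ for closed sets $Y_{i,s}\subseteq X_{i,s}$. As in the proof of Lemma~\ref{lem:finite-limit}, $Y_{i,s}$ should be the closure of the union of the ranges of all later eigenmaps into $X_{i,s}$. Indeed, the diagonal description and the inductive-limit norm formula show that a function vanishes in the limit if and only if it vanishes on every eventual eigenmap range.

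Put $A_i'=\phi_{i,\infty}(A_i)\cong\bigoplus_s p_{i,s}M_{r_{i,s}}(C(Y_{i,s}))p_{i,s}$, dropping the summands with $Y_{i,s}=\varnothing$. The eigenmaps $\lambda_{i,i+1}^{s,k,a}$ restricted to $Y_{i+1,k}$ land in $Y_{i,s}$. So the new system is again ordinary globally diagonal, with the restricted unit bundles, the same ranks $n_{i,s}$, injective connecting maps and limit $A$.

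The sets $Y_{i,s}$ may be disconnected. I would either check that \cite[Theorem 6.2]{NiuAH} does not use connectedness, or split each $Y_{i,s}$ into clopen pieces where the rank is constant. Since the ranks are unchanged, the normalization in Definition~\ref{def:gamma} survives either way.

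The key comparison is $\gamma(\mathcal P')\leq\gamma(\mathcal P)$. A finite open cover of $Y_i$ extends to a cover of $X_i$: enlarge each member to an open set of $X_i$ and add $X_i\setminus Y_i$. The propagated covers on $Y_{j,k}$ are then restrictions of the propagated covers on $X_{j,k}$, because every eigenmap from $Y_{j,k}$ lands in $Y_i$. Since $\mathcal D$ does not increase under restriction to a closed subset, we get $d_i'(\alpha')\leq d_i(\alpha)$, and the inequality follows. With this in hand, \cite[Theorem 6.2]{NiuAH} gives $\operatorname{rc}(A)\leq\frac12\gamma(\mathcal P')\leq\frac12\gamma(\mathcal P)$.

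I expect the main obstacle to be matching hypotheses precisely: confirming that the cited theorem applies to corners $pM_r(C(X))p$ normalized by the rank $n$, to possibly disconnected or non-simply-indexed bases, and to this exact definition of $\mathcal D$ and $\gamma$. If it does not, the fallback is to rerun the argument directly. That argument uses the same cover rate $d$, a stage where $\mathcal D(\alpha_{i,j,k})\leq (d+\varepsilon)n_{j,k}$, and the standard dimension-drop comparison for homogeneous algebras: positive elements with rank gap exceeding half the base dimension are comparable. Transferring this through the diagonal eigenmaps gives comparison bounds $\frac12 d+\varepsilon$.
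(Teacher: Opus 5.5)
Your passage to $\mathcal P'$ is legitimate as far as it goes. $\phi_{i,\infty}(A_i)$ is a homogeneous corner over closed sets $Y_{i,s}$. The restricted eigenmaps give an injective globally diagonal presentation of $A$ with the same ranks and occurrences. Your cover-extension argument correctly gives $\gamma(\mathcal P')\leq\gamma(\mathcal P)$. One correction: the right description is $Y_{i,s}=\bigcap_{j\geq i}\bigcup_{k,a}\lambda_{i,j}^{s,k,a}(X_{j,k})$, a decreasing intersection, since $\|\phi_{i,j}(f)\|$ decreases to $\max_s\sup_{Y_{i,s}}\|f_s\|$; it is not the closure of a union.

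The gap is the last step: the reduction trades injectivity for connectedness. $\mathcal P'$ is not of the form in Definition~\ref{def:ah}, which is the class the paper identifies with Niu's. Your clopen-splitting repair cannot work, because $p_{i,s}$ already has constant rank on $Y_{i,s}$, while $Y_{i,s}$ may have infinitely many components. For example, take $A_i=M_{2^i}(C[0,1])$ with the two eigenmaps $x\mapsto x/3$ and $x\mapsto(2+x)/3$. This is a noninjective presentation in the class of Theorem~\ref{thm:ah-main} (its limit is simple AF), and every $Y_i$ is the Cantor set. So \cite[Theorem~6.2]{NiuAH} cannot simply be quoted for $\mathcal P'$. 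One must check that its proof tolerates disconnected spectra, and that check is the actual content of the lemma.

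The paper does this check without changing the presentation, by verifying the two hypotheses of \cite[Lemma~6.1]{NiuAH}.
\begin{enumerate}
\item A finite subset of $A$ is moved to a stage $j$ with $\max_k\mathcal D(\alpha_{i,j,k})/n_{j,k}<\gamma+\zeta$. There it is approximated by a unital subalgebra $C_j\cong\bigoplus_ke_kM_{R_k}(C(\Omega_k))e_k$, where $\Omega_k$ is the image of $X_{j,k}$ in the nerve of an order-$\mathcal D(\alpha_{i,j,k})$ refinement. The unit is moved into $C_j$ by a spectral cut and a unitary close to $1$. The image of $C_j$ in $A$ is a corner over closed sets $K_k\subset\Omega_k$. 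Toms' estimate, which uses only finite covering dimension and constant rank, then gives $\operatorname{rc}<\gamma/2+\zeta/2$. Noninjectivity and disconnectedness are both absorbed in this step.
\item Projections with nearly constant normalized traces are obtained by selecting prefixes of eigenmap occurrences and using Lemma~\ref{lem:rank-growth}. This is what lets the local comparison bounds pass to $A$.
\end{enumerate}
Your fallback sketch has the right choice of stage but omits both mechanisms. Toms' bound cannot be applied to $A_j$ itself, since $X_{j,k}$ may be infinite dimensional, so the factorization through the nerve is essential. And transferring comparison through the eigenmaps does not by itself give comparison in $A$ without the trace-approximation hypothesis of Lemma~6.1. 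Once these two steps are carried out, the passage to $\mathcal P'$ becomes unnecessary.
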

\begin{proof}
We adapt the proof of \cite[Theorem~6.2]{NiuAH} to the
noninjective presentations allowed here by verifying the two
hypotheses of \cite[Lemma~6.1]{NiuAH}.  The finite-dimensional case follows
from Lemma~\ref{lem:finite-limit}, and the assertion is automatic if
$\gamma(\mathcal P)=\infty$.  Suppose therefore that $A$ is infinite
dimensional and $\gamma=\gamma(\mathcal P)<\infty$.

\emph{Finite-stage local approximants.}
Fix a finite subset $F\subset A$ and positive tolerances $\eta$ and
$\zeta$.  Approximate $F$ by the images of a finite set $F_i\subset A_i$
at some stage $i$.  Include the unit in $F_i$.  In the ambient matrix
realization of each source corner choose a componentwise finite open
cover $\alpha$ controlling, to an arbitrarily small tolerance, the
variation of these matrices and of the source unit projections.
The tolerance may depend on the finite maximum of the norms of the
chosen lifts; no norm bound on their images is substituted for that
maximum.
By Definition~\ref{def:gamma} and cover-rate monotonicity, a
sufficiently late stage $j$ satisfies
\[
 \max_k\frac{\mathcal D(\alpha_{i,j,k})}{n_{j,k}}
     <\gamma+\zeta.
\]

Carry out the construction in
\cite[Proof of Theorem~4.3]{NiuAH} inside this finite stage, before
mapping it to $A$.  For clarity, for each target block use its
diagonal corner realization
\[
 E_{j,k}\cong\bigoplus_{s,a}
                   (\lambda_{i,j}^{s,k,a})^*E_{i,s}
 \subset X_{j,k}\times\mathbb C^{R_k},
 \qquad R_k=\sum_s m_{i,j}^{s,k}r_{i,s}.
\]
In these coordinates both the images of $F_i$ and the target unit
projection have small matrix variation on every member of
$\alpha_{i,j,k}$.  This statement concerns the diagonal realization;
it does not assert small variation after a varying bundle unitary
has transported the matrices to a different realization.

Refine $\alpha_{i,j,k}$ by a cover of order
$\mathcal D(\alpha_{i,j,k})$, and use a subordinate partition of
unity to map $X_{j,k}$ into its finite nerve $\Delta_k$.
Writing $\Omega_k$ for the compact image of this map, pullback of
matrix functions gives a subalgebra
$M_{R_k}(C(\Omega_k))\subset M_{R_k}(C(X_{j,k}))$.
The partition-of-unity averages of the finitely many matrices
belong to this subalgebra and approximate those matrices in norm.
The average of the target unit projection can be cut spectrally at
$1/2$ to obtain a projection $e_k$ in this subalgebra arbitrarily
close to the target unit projection.  A unitary close to the identity
in the \emph{full ambient matrix algebra} identifies these two
projections.  Conjugating the $e_k$-corner by that unitary, and then
transporting it back by the prescribed bundle identification, gives
a unital subalgebra $C_j\subset A_j$ with
\[
 C_j\cong\bigoplus_k
       e_kM_{R_k}(C(\Omega_k))e_k,
 \qquad \operatorname{rank}(e_k)=n_{j,k},
 \qquad \dim\Omega_k\leq\mathcal D(\alpha_{i,j,k}).
\]
It approximates $\phi_{i,j}(F_i)$ as closely as desired.  The rank
statement holds at every point of $\Omega_k$, since every such
point is the image of a point of $X_{j,k}$ and the pulled-back
projection is unitarily equivalent to the unit of rank $n_{j,k}$.
The original approximation of $F$ can therefore be chosen so that
the image $C=\phi_{j,\infty}(C_j)$ approximates $F$ within $\eta$.

\emph{Quotient images do not increase the dimension ratio.}
The homomorphism $C_j\to C$ need not be injective.  Every ideal in
a homogeneous corner corresponds to an open subset of its spectrum.
Consequently its quotient is the same corner restricted to a closed
subset.  Omitting zero summands, there are compact sets
$K_k\subset\Omega_k$ with
\[
 C\cong\bigoplus_k
       (e_k|_{K_k})M_{R_k}(C(K_k))(e_k|_{K_k}).
\]
This is a unital subalgebra of $A$.  Its surviving unit ranks are
still $n_{j,k}$ and
$\dim K_k\leq\dim\Omega_k$.  The homogeneous-corner comparison
estimate of \cite[Corollary~5.2]{TomsComparison}, also used in
\cite[Proof of Theorem~6.2]{NiuAH}, gives
\[
 \operatorname{rc}(C)
 \leq\max_k\frac{\dim K_k}{2n_{j,k}}
 <\frac\gamma2+\frac\zeta2.
\]
For zero-dimensional spectra the same bound is zero, since the
homogeneous corner is AF.  Thus the local approximation condition
of \cite[Lemma~6.1]{NiuAH} holds with $r=\gamma/2$.  The quotient
spectra need not be connected; the displayed comparison estimate
uses their finite covering dimension and their constant unit ranks.

\emph{Projections with nearly constant normalized traces.}
Fix one source stage $i$ and put
$N_i=\max_s n_{i,s}$.  For any $u\in[0,1]$ and any late stage $j$,
order the eigenmap occurrences in each target block.  The corresponding
pullback unit bundles have positive ranks at most $N_i$ and total
rank $n_{j,k}$.  Select a prefix of these occurrences whose total
rank is the largest prefix rank not exceeding $u n_{j,k}$.
Its rank differs from $u n_{j,k}$ by at most $N_i$; for $u=1$ select
all occurrences.  Projection onto the selected summands, transported
by the global diagonal bundle identification, is a continuous
projection $z_j\in A_j$.  Uniformly over all normalized fiber traces,
\[
 \left|\tau_x(z_j)-u\right|
 \leq\frac{N_i}{\min_k n_{j,k}}
 \longrightarrow0
\]
by Lemma~\ref{lem:rank-growth}.  The same estimate holds for every
trace on $A$ after mapping $z_j$ to the limit, because its restriction
to $A_j$ is a convex integral of these fiber traces.  Adding copies
of the unit gives the corresponding approximation to any
nonnegative constant in matrix amplifications.  The same finite-stage local construction applies to matrix
amplifications, with unit ranks multiplied by the amplification size.
These estimates and projections verify the hypotheses of
\cite[Lemma~6.1]{NiuAH} with the trace normalization fixed in
Section~\ref{sec:common-conventions}.

Finally $A$ is simple and exact.  The two verified conditions allow
\cite[Lemma~6.1]{NiuAH} to pass the local comparison bounds to the
limit, giving $\operatorname{rc}(A)\leq\gamma/2$.
\end{proof}

\section{Consequences for diagonal AH algebras}\label{sec:ah-consequences}
\begin{proof}[Proof of Theorem~\ref{thm:ah-main}]
If $A$ is finite dimensional, simplicity identifies it with a full matrix
algebra, and Lemma~\ref{lem:finite-limit} gives both sides equal to zero.
Otherwise the reverse inequality is Theorem~\ref{thm:lower}. Niu's upper bound,
with the noninjective case explained in Lemma~\ref{lem:upper-bound},
gives $\rc(A)\leq\gamma(\mathcal P)/2$.
Together these yield the claimed equality, including infinite values.
\end{proof}

\begin{corollary}\label{cor:invariance}
Within the class of presentations in Theorem~\ref{thm:ah-main},
$\gamma(\mathcal P)$ is independent of the presentation. In particular,
$\rc(A)=0$ if and only if $\gamma(\mathcal P)=0$.
\end{corollary}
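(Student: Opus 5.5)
The invariance follows directly from Theorem~\ref{thm:ah-main}. Let $\mathcal P$ and $\mathcal P'$ be two presentations in the class of Theorem~\ref{thm:ah-main} whose limits are both isomorphic to $A$. Then
\[
 \gamma(\mathcal P)=2\operatorname{rc}(A)=\gamma(\mathcal P')
 \qquad\text{in }[0,\infty].
\]
The first step is to check that $\operatorname{rc}$ is an isomorphism invariant. Its definition in Section~\ref{sec:common-conventions} uses only the tracial state space, the dimension functions $d_\tau$, and Cuntz subequivalence in $M_\infty(A)$. All of these are transported by any $*$-isomorphism and its amplifications. Hence the set of comparison bounds, and so its infimum, depends only on the isomorphism class of $A$.

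The second step is to note that Theorem~\ref{thm:ah-main} covers both the finite- and infinite-dimensional cases. The former is handled by Lemma~\ref{lem:finite-limit}; the latter by Theorem~\ref{thm:lower} combined with Lemma~\ref{lem:upper-bound}. The equality therefore holds for each presentation separately with no extra hypothesis. Its stated validity in $[0,\infty]$ means the infinite case needs no separate treatment: $\gamma(\mathcal P)=\infty$ exactly when $\operatorname{rc}(A)=\infty$.

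For the final assertion, put $\gamma(\mathcal P)=2\operatorname{rc}(A)$. Since $2x=0$ if and only if $x=0$ in $[0,\infty]$, we get $\operatorname{rc}(A)=0$ if and only if $\gamma(\mathcal P)=0$.

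There is no real obstacle here. The one point to state carefully is that ``independent of the presentation'' is meant within the class of Theorem~\ref{thm:ah-main}, for presentations with isomorphic limits. No direct comparison between the cover systems of the two presentations is needed, since the invariant $\operatorname{rc}(A)$ mediates between them.
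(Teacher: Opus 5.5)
Your proposal is correct and follows the paper's proof, which simply reads off both assertions from the identity $\gamma(\mathcal P)=2\rc(A)$ given by Theorem~\ref{thm:ah-main}. Your check that $\rc$ is an isomorphism invariant, and your recap of how the finite- and infinite-dimensional cases are covered, make explicit steps the paper leaves implicit.
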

\begin{proof}
The equality identifies $\gamma(\mathcal P)$ with $2\rc(A)$.
\end{proof}

The proof uses simplicity to obtain uniform growth of the minimum matrix
rank and to invoke the known upper bound. Compact metrizability permits
the application of the compression theorem. Ordinary diagonal maps
provide distinct coordinates for every eigenmap occurrence and preserve
the global trivial frames in matrix amplifications. No lower bound on the
multiplicity of any fixed component type is used: the bounded remainders
from all types are absorbed by the growing total matrix rank.


\part{Amenable actions with URP}\label{part:actions}

\section{Amenable actions, URP, and COS}\label{sec:actions-conventions}
\subsection{Actions and COS}
We use a left action, written $T_gx=gx$, with $T_gT_h=T_{gh}$. On $C(X)$ we set $\alpha_g(f)(x)=f(g^{-1}x)$, and write $A=C(X)\rtimes_\alpha\Gamma$. Amenability makes the full and reduced crossed products agree. The algebra is unital and nuclear. Freeness identifies its normalized traces with invariant probability
measures: $\tau_\mu=\mu\circ\mathbb E$, where
$\mathbb E:A\to C(X)$ is the canonical expectation.
Indeed, for $g\ne e$, freeness gives a finite open cover by sets $V$
with $V\cap gV=\varnothing$. A partition of unity on this cover and
traciality imply $\tau(fu_g)=0$ for $f\in C(X)$. The restriction of
a trace to $C(X)$ is invariant, and conversely $\mu\circ\mathbb E$
is a trace for every invariant probability measure $\mu$.

For an open set $O\subset X$, choose $h_O\in C(X)_+$ with
$\{h_O>0\}=O$. In the sense of \cite[Definition 4.1]{Niu}, the action
has COS if there are $\lambda\in(0,1]$ and $k\in\N$ such that, for
all open $O_1,O_2\subset X$,
\[
 \mu(O_1)<\lambda\mu(O_2)\quad(\mu\in\Mu)
 \quad\Longrightarrow\quad h_{O_1}\bpre h_{O_2}^{\oplus k}
 \quad\text{in }M_\infty(A).
\]
The Cuntz classes involved do not depend on the choices of $h_O$.

\subsection{Finite sets and cover rates}
For finite $K,F\subset\Gamma$ containing the identity in $K$, say that $F$ is $(K,\eps)$-invariant if $|KF\setminus F|<\eps|F|$. Define its $K$-core and inner collar by
\[
 F_K=\{v\in F:Kv\subset F\},\qquad \partial_KF=F\setminus F_K.
\]
Then $|\partial_KF|\leq\sum_{k\in K}|kF\setminus F|$. We always request enough invariance to control the inner collar itself; a union bound may require dividing the tolerance by $|K|$.

For a finite open cover $\UU$, write
\[
 \UU^F=\bigvee_{v\in F}T_v^{-1}\UU,
 \qquad m(F)=\DD(\UU^F).
\]
Put $m(\varnothing)=0$ and $m_0=\DD(\UU)$. We have
\[
 m(Fg)=m(F),\quad m(F\cup L)\leq m(F)+m(L),\quad
 0\leq m(F)\leq m_0|F|.
\]
The Ornstein--Weiss lemma gives a common limit $d_\UU$ of $m(F_n)/|F_n|$ along every left F\o lner sequence; see \cite{LW,Krieger}. The mean dimension is $\sup_\UU d_\UU$.

\begin{lemma}[Uniform form of the cover limit]\label{lem:coverlimit}
For every $\zeta>0$, there are a finite $K$ and $\eps>0$ such that every nonempty $(K,\eps)$-invariant $F$ satisfies
\[
 \left|m(F)/|F|-d_\UU\right|<\zeta.
\]
\end{lemma}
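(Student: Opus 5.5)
We prove this as the uniform, Følner-set version of the Ornstein--Weiss argument. The set function $m$ is right-invariant, subadditive, and bounded by $m_0|F|$. We prove the two one-sided inequalities separately. The upper bound $m(F)/|F|<d_\UU+\zeta$ comes from tiling $F$ by translates of one fixed good set. The lower bound comes from covering one fixed good set by translates of $F$.

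\emph{Upper bound.} Fix a Følner sequence and choose a finite set $F_0$ with $m(F_0)/|F_0|<d_\UU+\zeta/2$. Use the Ornstein--Weiss quasitiling theorem in the form of \cite{LW,Krieger}, or the exact tilings used later in this paper. It gives finitely many shapes $F_1,\dots,F_N$, each chosen from the tail of the Følner sequence with $m(F_i)/|F_i|<d_\UU+\zeta/4$, and $\eta>0$ small compared to $\zeta/m_0$. It also gives $K$ and $\eps$ with the following property. Every $(K,\eps)$-invariant $F$ contains disjoint right translates $F_ic$ covering at least $(1-\eta)|F|$ points. Right invariance $m(F_ic)=m(F_i)$ and subadditivity then give
\[
 m(F)\le\sum m(F_ic)+m_0\eta|F|\le (d_\UU+\zeta/4)|F|+m_0\eta|F|.
\]
If $\eta$ is disjointness up to $\eta$-overlap rather than exact disjointness, we first shrink each tile to a disjoint subset. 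This loses at most $m_0$ per removed point, so the estimate only changes by $O(m_0\eta)|F|$.

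\emph{Lower bound.} We need $m(F)\ge (d_\UU-\zeta)|F|$ for all sufficiently invariant $F$. Fix a large finite set $L$ from the tail of the Følner sequence, so that $m(L)/|L|>d_\UU-\zeta/2$. We make $L$ so invariant that the $F$-dependence can be averaged. The plan uses the standard double-counting trick. For $(K,\eps)$-invariant $F$ with $K\supseteq L L^{-1}$ and $\eps$ small, the translates $Fg$ with $g$ ranging over a suitable set $S$ cover $L$ with controlled multiplicity. Concretely, take $S=\{g: Fg\cap L\neq\varnothing\}=F^{-1}L$. Each point of $L$ lies in exactly $|F|$ sets $Fg$ with $g\in S$. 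Subadditivity applied to the cover $\{Fg\cap L\}$ is not enough directly, because we need $\sum_g m(Fg\cap L)\ge |F|\, m(L)$. Instead we use the Shearer-type inequality $\sum_{g\in S} m(Fg\cap L)\ge |F|\,m(L)$, which follows from subadditivity by the standard fractional-cover argument for the Downarowicz--Lindenstrauss--Weiss type set functions. Alternatively, we avoid Shearer by swapping roles. Choose $L$ to be $(FF^{-1},\eta)$-invariant relative to $F$ is impossible uniformly, so we keep Shearer. The interior translates with $Fg\subset L$ contribute $\sum m(F)$. The boundary translates number at most $|F|\,|\partial_{F}L|$-ish, and each contributes at most $m_0|F|$. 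After choosing $L$ invariant enough depending on $F$'s size, this error is $o(|L|)$. Dividing by $|F||L|$ gives $m(F)/|F|\ge m(L)/|L|-\zeta/2$.

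The main obstacle is exactly this lower bound's uniformity. $L$ must be chosen after $F$ to make boundary errors small, yet $K,\eps$ must be fixed before $F$. I expect to resolve it by contradiction. Suppose there is a sequence of increasingly invariant $F_n$ with $m(F_n)/|F_n|\le d_\UU-\zeta$. Such a sequence is itself a Følner sequence. The Ornstein--Weiss lemma already quoted says $m(F_n)/|F_n|\to d_\UU$ along every Følner sequence, which is a contradiction. The same contradiction argument also handles the upper bound, making the tiling step a fallback. Thus the cleanest proof is this: negate the statement, pick $K_n\uparrow\Gamma$ and $\eps_n\downarrow0$ with bad $(K_n,\eps_n)$-invariant sets $F_n$, note that $(F_n)$ is a left Følner sequence, and contradict the common limit $d_\UU$.
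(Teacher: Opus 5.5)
Your final paragraph is the paper's proof. You negate the statement, take $(K_n,1/n)$-invariant counterexamples $F_n$ with $K_n$ exhausting $\Gamma$, note that they form a left F\o lner sequence, and contradict the Ornstein--Weiss common limit $d_\UU$. You should delete the quasitiling and double-counting detours, since the contradiction argument needs neither, and the ``Shearer-type'' inequality $\sum_{g}m(Fg\cap L)\geq|F|\,m(L)$ does not follow from subadditivity alone (it fails for general subadditive set functions).
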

\begin{proof}
Otherwise exhaust $\Gamma$ by finite sets $K_n$ containing the identity, and choose $(K_n,1/n)$-invariant counterexamples $F_n$. These form a left F\o lner sequence, contradicting the common limit. In an infinite group their cardinalities tend to infinity.
\end{proof}

\begin{lemma}[Uniform closed-set frequency]\label{lem:frequency}
If $C\subset X$ is closed and $b>\sup_{\mu\in\Mu}\mu(C)$, then for all sufficiently invariant finite $F$, uniformly in $x\in X$,
\[
 |\{v\in F:vx\in C\}|\leq b|F|.
\]
\end{lemma}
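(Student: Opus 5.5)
We argue by contradiction with a compactness argument, in the same spirit as Lemma~\ref{lem:coverlimit}. Suppose the conclusion fails. Exhaust $\Gamma$ by finite sets $K_n$ containing the identity. For each $n$ there are then a $(K_n,1/n)$-invariant finite set $F_n$ and a point $x_n\in X$ with
\[
 |\{v\in F_n:vx_n\in C\}|>b|F_n|.
\]
The sets $F_n$ form a left F\o lner sequence. We pass from these bad orbit segments to an invariant measure that gives $C$ mass at least $b$, which contradicts the choice of $b$.

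\textbf{Step 1: empirical measures.} Consider the probability measures
\[
 \mu_n=\frac1{|F_n|}\sum_{v\in F_n}\delta_{vx_n}.
\]
By weak$^*$ compactness of the probability measures on the compact metrizable space $X$, a subsequence converges to some probability measure $\mu$.

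\textbf{Step 2: invariance of the limit.} Fix $g\in\Gamma$ and $f\in C(X)$. Then
\[
 \left|\int f\circ T_g\,d\mu_n-\int f\,d\mu_n\right|
 \leq\frac{\|f\|_\infty\,|gF_n\triangle F_n|}{|F_n|}.
\]
Since $|gF_n\setminus F_n|=|F_n\setminus gF_n|$ and the sets are left F\o lner, the right side tends to $0$ once $g\in K_n$. Hence $\mu\in\Mu$. This step depends on the convention $T_gx=gx$ and on the invariance condition being stated as $|KF\setminus F|<\eps|F|$, i.e.\ with multiplication on the left. That matches the orbit points $vx$ for $v\in F$, because translating by $g$ sends $vx$ to $(gv)x$.

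\textbf{Step 3: upper semicontinuity on the closed set.} By construction $\mu_n(C)>b$ for every $n$. The portmanteau theorem for the closed set $C$ gives
\[
 \mu(C)\geq\limsup_n\mu_n(C)\geq b.
\]
This contradicts $b>\sup_{\mu\in\Mu}\mu(C)$.

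Interpreting "sufficiently invariant" as "$(K,\eps)$-invariant for some finite $K$ and some $\eps>0$", the contradiction yields such $K,\eps$ that work uniformly in $x$. If $\Gamma$ is finite, the argument is unchanged: the $F_n$ are then eventually all of $\Gamma$, and the same proof goes through.

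The only delicate point is the direction of invariance in Step 2. The rest is routine weak$^*$ compactness together with the portmanteau inequality for closed sets.
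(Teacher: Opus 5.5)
Your proof is correct and takes the paper's route: a F\o lner sequence of counterexample windows with points $x_n$, invariance of a weak$^*$ cluster point of the empirical measures, and the Portmanteau inequality for the closed set $C$. Your explicit estimate $|gF_n\triangle F_n|/|F_n|\to0$ supplies the invariance step that the paper only asserts.
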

\begin{proof}
If the conclusion failed, choose a F\o lner sequence of counterexample windows and points $x_n$. Any weak-star cluster point of the empirical measures $|F_n|^{-1}\sum_{v\in F_n}\delta_{vx_n}$ is invariant. The closed-set inequality in the Portmanteau theorem would give it mass at least $b$ on $C$, a contradiction. The same sequential argument as in Lemma~\ref{lem:coverlimit} makes the conclusion uniform over all sufficiently invariant finite sets.
\end{proof}

\subsection{URP and exact tilings}
For $E\subset X$, define its orbit capacity by
\[
 \operatorname{ocap}(E)=
 \lim_F\sup_{x\in X}\frac1{|F|}\sum_{v\in F}1_E(vx),
\]
where the limit is taken along left F\o lner sets. The limit exists
by the Ornstein--Weiss lemma applied to the invariant subadditive
counting function. For every Borel $E$ and $\mu\in\Mu$, one has
$\mu(E)\leq\operatorname{ocap}(E)$.

We use the finite-castle formulation of URP in
\cite[Definition 3.1 and Lemma 3.2]{Niu}. For every finite
$K\subset\Gamma$ containing $e$ and every $\varepsilon,\delta>0$,
there are finitely many closed bases $V_i$ and nonempty finite
$(K,\varepsilon)$-invariant shapes $S_i$ such that the levels
$T_vV_i$, $v\in S_i$, are pairwise disjoint and
\[
 \operatorname{ocap}\!\left(
 X\setminus\bigsqcup_i\bigsqcup_{v\in S_i}T_vV_i
 \right)<\delta.
\]
The remainder consequently has uniformly small invariant-measure
mass. Lemma~\ref{lem:closed-to-open-castle} permits enlargement to
open bases while preserving the finite disjointness relations.
The right-action convention of \cite{Niu} converts to ours by
$gx=xg^{-1}$ and inversion of tower shapes; right invariance of
the original shapes becomes left invariance here.

An exact finite-shape tiling of $\Gamma$ is a partition into right translates $F_sc$, using finitely many finite shapes $F_s$. The centers belong to the tiles, equivalently $e\in F_s$. By \cite[Theorem 4.3]{DHZ}, the shapes can all be made arbitrarily invariant. They can also all be made arbitrarily large: if $F_s$ is $(K,\eps)$-invariant and nonempty, then $|K|\leq |KF_s|<(1+\eps)|F_s|$. Right normalization of shapes preserves left invariance and the quantities $m(F_s)$.

\section{Finite-shape allocation from URP}
\begin{lemma}[Open castles]
\label{lem:closed-to-open-castle}
Suppose $(S_i,A_i)_{i=1}^m$ is a finite castle with closed bases:
all the sets $sA_i$, $i\leq m$, $s\in S_i$, are pairwise disjoint.
There are open sets $V_i\supset A_i$ such that the sets $sV_i$
remain pairwise disjoint.  In particular, any lower bound on the
invariant-measure mass of the closed castle remains valid for this
open castle, and its shapes do not change.
\end{lemma}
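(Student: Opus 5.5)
The plan is a standard compactness argument in the compact metrizable space $X$, carried out with one metric-uniform margin for all the finitely many disjointness relations at once. The castle has finitely many levels $sA_i$, each a compact set, since $A_i$ is closed and $T_s$ is a homeomorphism. Pairwise disjoint compact sets in a metric space have positive distance from one another. Fix a compatible metric $\rho$ and put
\[
 \eta=\min\{\rho(sA_i,s'A_{i'}):(i,s)\neq(i',s')\}>0,
\]
with the convention that pairs involving an empty base are omitted.

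Next, I would use uniform continuity. The finitely many homeomorphisms $T_s$, for $s\in\bigcup_iS_i$, are uniformly continuous on the compact space $X$. Hence there is $\theta>0$ such that $\rho(x,y)<\theta$ implies $\rho(sx,sy)<\eta/2$ for every such $s$. Set
\[
 V_i=\{x:\rho(x,A_i)<\theta\},
\]
which is open and contains $A_i$. Then every point of $sV_i$ lies within $\eta/2$ of $sA_i$.

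The disjointness check then follows from the triangle inequality. If some point belonged to both $sV_i$ and $s'V_{i'}$ for distinct index pairs, the two levels $sA_i$ and $s'A_{i'}$ would be at distance less than $\eta$, which contradicts the choice of $\eta$. Distinct index pairs include the case $i=i'$ with $s\neq s'$, so the levels inside a single tower stay disjoint as well. The shapes $S_i$ are unchanged. The union of the open levels contains the union of the closed levels, so every invariant measure gives the open castle at least as much mass. By monotonicity of $\operatorname{ocap}$, the remainder's orbit capacity does not increase either, and any lower bound on the mass of the closed castle carries over.

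I see no real obstacle here. The only care needed is to make the margin uniform over all pairs and all shape elements simultaneously, and finiteness of the castle supplies exactly that.
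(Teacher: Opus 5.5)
Your proof is correct and follows essentially the same idea as the paper: separate the finitely many disjoint compact levels $sA_i$, then shrink each base so that every translate stays inside its own separating region. The paper avoids the metric and uniform continuity by choosing pairwise disjoint open neighborhoods $W_{i,s}\supset sA_i$ and setting $V_i=\bigcap_{s\in S_i}s^{-1}W_{i,s}$, so that $sV_i\subset W_{i,s}$; your uniform margin $\eta$ and common radius $\theta$ are a quantitative version of this same step.
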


\begin{proof}
The finitely many disjoint compact sets $sA_i$ have pairwise
disjoint open neighborhoods $W_{i,s}$.  Set
\[
  V_i=\bigcap_{s\in S_i}s^{-1}W_{i,s}.
\]
Then $V_i$ is open, contains $A_i$, and $sV_i\subset W_{i,s}$.
No commutation of group elements is used.
\end{proof}

Fix an exact discrete tiling
\begin{equation}
\label{eq:fixed-primary-group-tiling}
 \Gamma=\bigsqcup_{s=1}^q\bigsqcup_{c\in\mathcal C_s}F_sc,
 \qquad e\in F_s,
\end{equation}
with finitely many nonempty finite shapes.  Put
\[
 H=\bigcup_{s=1}^qF_sF_s^{-1}.
\]
This set is finite, symmetric, and contains $e$.  If a tile $F_sc$
meets a set $Y$, then
\begin{equation}
\label{eq:tile-meeting-enlargement}
  F_sc\subset HY.
\end{equation}
Indeed, if $fc\in Y$ and $f'\in F_s$, then
$f'c=(f'f^{-1})(fc)$.

\begin{proposition}[Finite-shape allocation]
\label{prop:finite-shape-allocation}
Suppose the action has the uniform Rokhlin property.  Fix the
tiling \eqref{eq:fixed-primary-group-tiling}, finite nonempty macro
shapes $E_1,\ldots,E_L$ containing $e$, and $0<\eta<1$ such that
\[
  |\partial_HE_l|\leq\eta|E_l|\qquad(1\leq l\leq L).
\]
Let $\alpha>0$ and $\theta_1,\ldots,\theta_L>0$.  There are open
sets $B_1,\ldots,B_q\subset X$, compact sets $K_s\subset B_s$,
and, for each $l$, a finite family of pairwise disjoint open sets
$(O_{l,\lambda})_{\lambda\in\Lambda_l}$, a continuous function
$w_l:X\to[0,1]$, and a closed set $C_l\subset X$, satisfying the
following properties.
\begin{enumerate}
\item The sets $fB_s$, $s\leq q$, $f\in F_s$, are pairwise
disjoint, and
\[
  \sum_s|F_s|\mu(B_s)>1-\alpha
  \qquad(\mu\in M_\Gamma(X)).
\]
Empty bases are allowed.
\item Writing $O_l=\bigsqcup_{\lambda\in\Lambda_l}O_{l,\lambda}$,
one has
\[
 \operatorname{supp}(w_l)\subset O_l,
 \qquad \{w_l<1\}\subset C_l,
 \qquad \sup_{\mu\in M_\Gamma(X)}\mu(C_l)<\theta_l/2.
\]
In particular, the label $\lambda$ is locally constant on a
neighborhood of $\operatorname{supp}(w_l)$.
\item Each $\lambda\in\Lambda_l$ specifies a finite pattern
$\mathcal P_{l,\lambda}$ of pairs $(s,d)$ such that the right
translates $F_sd$, $(s,d)\in\mathcal P_{l,\lambda}$, are pairwise
disjoint subsets of $E_l$.  Their hole set
\[
  J_{l,\lambda}
  =E_l\setminus\bigsqcup_{(s,d)\in\mathcal P_{l,\lambda}}F_sd
\]
satisfies the pointwise estimate
\[
  J_{l,\lambda}\subset\partial_HE_l,
  \qquad |J_{l,\lambda}|\leq\eta|E_l|.
\]
For every $(s,d)\in\mathcal P_{l,\lambda}$,
\[
 dO_{l,\lambda}\subset B_s,
 \qquad
 d\bigl(\operatorname{supp}(w_l)\cap O_{l,\lambda}\bigr)
 \subset K_s.
\]
\item There exist a finite symmetric $K_0\subset\Gamma$, containing
$e$, and $\delta_0>0$ such that for every $(K_0,\delta_0)$-invariant finite $F$, every $x\in X$, and every
$l$,
\[
 |\{g\in F:w_l(gx)<1\}|<\theta_l|F|.
\]
The allocation data are fixed before the threshold pair
$(K_0,\delta_0)$ is chosen.
\end{enumerate}
\end{proposition}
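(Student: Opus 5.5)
The plan is to build all the data from a single URP castle whose shapes are much more invariant than $H$ and all the macro shapes $E_l$, and to read both the primary bases $B_s$ and the macro labels off the \emph{global} tiling \eqref{eq:fixed-primary-group-tiling}, restricted to each tower shape. Concretely, I put $K=H\cup\bigcup_lE_l^{-1}E_l\cup\{e\}$ and choose $\varepsilon,\delta$ small compared with $\alpha$ and $\min_l\theta_l$. I then apply the finite-castle form of URP to obtain closed bases $A_i$ and $(K,\varepsilon)$-invariant shapes $S_i$ whose remainder has orbit capacity $<\delta$. Next I choose closed sets $A_i\subset\operatorname{int}A_i'\subset A_i'$ and open sets $V_i\supset A_i'$ with all levels $vV_i$ ($v\in S_i$) pairwise disjoint. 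This uses Lemma~\ref{lem:closed-to-open-castle} applied to slightly enlarged closed bases $A_i'$, which is possible because the finitely many compact levels $vA_i$ are disjoint.

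\emph{Primary bases (item 1).} Each $S_i$ is covered by the global tiles $F_sc\subset S_i$ except for the points lying in tiles that meet $\Gamma\setminus S_i$. By \eqref{eq:tile-meeting-enlargement} these exceptional points lie in $\partial_HS_i$, whose proportion in $S_i$ is $<|H|\varepsilon$. I set
\[
B_s=\bigcup_i\ \bigcup_{c:\,F_sc\subset S_i}cV_i,\qquad K_s=\bigcup_i\ \bigcup_{c:\,F_sc\subset S_i}cA_i'.
\]
Since $fcV_i$ is a level of the $i$-th tower, disjointness of the sets $fB_s$ follows from disjointness of the castle and of the tiles. For any invariant $\mu$, all levels of tower $i$ have mass $\mu(V_i)$, so $\sum_s|F_s|\mu(B_s)\geq(1-|H|\varepsilon)\sum_i|S_i|\mu(V_i)\geq(1-|H|\varepsilon)(1-\delta)$. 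This exceeds $1-\alpha$ for small parameters.

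\emph{Macro labels (items 2--3).} For each $l$ I take labels $\lambda=(i,v)$ with $v$ in the core $\{v\in S_i:E_lv\subset S_i\}$. I set $O_{l,\lambda}=vV_i$; these are disjoint open levels. The pattern $\mathcal P_{l,\lambda}$ is the set of pairs $(s,cv^{-1})$ with $F_sc\subset E_lv$ a global tile. These right translates are disjoint and lie in $E_l$, and for $x=vy$ with $y\in V_i$ one gets $dx=cy\in cV_i\subset B_s$, because $F_sc\subset E_lv\subset S_i$. For the hole estimate, suppose $gv\in E_lv$ lies in a tile $F_sc\not\subset E_lv$. That tile contains some $f'c\notin E_lv$, and $f'c\in Hgv$ by the computation after \eqref{eq:tile-meeting-enlargement}. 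Hence $Hg\not\subset E_l$, i.e.\ $g\in\partial_HE_l$, which gives $J_{l,\lambda}\subset\partial_HE_l$ and the bound $|J_{l,\lambda}|\leq\eta|E_l|$. By Urysohn I choose $w_l$ equal to $1$ on each $vA_i$ and supported in $v A_i'$, with $v$ ranging over the cores. Then $d(\operatorname{supp}w_l\cap O_{l,\lambda})\subset cA_i'\subset K_s$. Finally I let $C_l$ be the closed complement of the open set $\bigcup_{(i,v)}v\operatorname{int}A_i$ (union over core labels), so that $\{w_l<1\}\subset C_l$.

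\emph{Measure and frequency bounds (items 2 and 4).} This is the step I expect to be the main bookkeeping obstacle. The set $C_l$ is contained in the union of three pieces: the castle remainder (mass $<\delta$), the non-core levels (fraction $\leq|E_l||\,\cdot\,|$-type collar, $<|E_l|\varepsilon$ of the castle mass), and the thin shells $v(A_i'\setminus\operatorname{int}A_i)$. The first two are controlled by invariance and orbit capacity, uniformly in $\mu$. For the third I would shrink $A_i'$ toward $A_i$ at the outset: invariant measures of the closed shells $v\partial A_i$-neighborhoods tend to $\sup_\mu\mu(v\,\partial A_i)$, and I expect to handle the boundary either by choosing $A_i$ to have boundaries of small capacity, or more simply by placing the cut-off inside the open $V_i$ and accepting $C_l\supset$ the full remainder of $\{w_l=1\}$, estimated through the closed castle mass. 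Once $\sup_\mu\mu(C_l)<\theta_l/2$ holds, Lemma~\ref{lem:frequency} with $b=\theta_l$ gives item 4. That lemma provides a pair $(K_l,\delta_l)$ for each $l$; I take $K_0$ to be the symmetrized union of the $K_l$ and $\delta_0=\min_l\delta_l$. This pair is chosen only after all the allocation data are fixed, as required.
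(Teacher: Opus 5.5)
\textbf{Gap: the measure bound for $C_l$ in item 2.} Your tiling bookkeeping for items 1 and 3 is correct: the bases $B_s$, the labels $(i,v)$ with $E_lv\subset S_i$, the placements $d=cv^{-1}$, and the hole estimate. Using the core $\{v:E_lv\subset S_i\}$ instead of the paper's $\{v:HE_lv\subset S_i\}$ is harmless. The gap is in item 2, at the step you flagged. You make $w_l=1$ only on the closed core levels $vA_i$ and take $C_l=X\setminus\bigcup_{(i,v)}v\operatorname{int}A_i$. Then
\[
 \mu(C_l)=1-\sum_i\bigl|\{v\in S_i:E_lv\subset S_i\}\bigr|\,\mu(\operatorname{int}A_i),
\]
which involves $\mu(\operatorname{int}A_i)$, not $\mu(A_i)$. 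The finite-castle form of URP bounds only the orbit capacity of the open remainder $X\setminus\bigsqcup vA_i$. The sets $v(A_i\setminus\operatorname{int}A_i)$ lie inside the closed castle, and nothing you have established controls their invariant-measure mass.

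Your first two remedies do not close this gap. Shrinking $A_i'$ does not affect $\partial A_i$; as you observe, the shell masses only decrease to the mass of $\partial A_i$. Bases with uniformly small boundary are not supplied by URP; that is a small-boundary-type property, which cannot be expected in the positive mean dimension regime this proposition serves. So $\sup_\mu\mu(C_l)<\theta_l/2$ is not proved as written, and item 4 is not proved either, since it rests on it.

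\textbf{How to repair it.} Your last alternative points the right way, but it has to be carried out: the plateau of $w_l$ must contain an \emph{open} set around each closed core level.
\begin{enumerate}
\item Choose closed sets $A_i''$ with $A_i'\subset\operatorname{int}A_i''\subset A_i''\subset V_i$.
\item Let $w_l=1$ on each core level $vA_i'$, with support in $vA_i''$.
\item Put $C_l=X\setminus\bigcup_{(i,v)}v\operatorname{int}A_i'$ and $K_s=\bigcup_i\bigcup_{F_sc\subset S_i}cA_i''$.
\end{enumerate}
Since $A_i\subset\operatorname{int}A_i'$, the closed-castle mass $\sum_i|S_i|\mu(A_i)>1-\delta$ and your core estimate give, uniformly in $\mu$,
\[
 \mu(C_l)\leq 1-\sum_i\bigl|\{v\in S_i:E_lv\subset S_i\}\bigr|\,\mu(A_i)
 <|E_l|\varepsilon+\delta .
\]

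\textbf{Comparison with the paper.} The paper never uses the closed bases at this point. It works with the open castle and the uniform bound $\mu(O_l)>1-2\varepsilon$. It takes compactly supported $\varphi_n\uparrow 1_{O_l}$ and applies a Dini-type compactness argument on $M_\Gamma(X)$ to find one $\varphi$ with $\inf_\mu\int\varphi\,d\mu>1-\theta_l/2$. It then sets $C_l=\{\varphi=0\}$ and makes $w_l=1$ on $\operatorname{supp}\varphi$. Your repaired route avoids that compactness step by using the nested closed and open bases directly.

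\textbf{A smaller point.} Lemma~\ref{lem:frequency} with $b=\theta_l$ yields only $\leq\theta_l|F|$. To get the strict inequality in item 4, take $b$ strictly between $\sup_\mu\mu(C_l)$ and $\theta_l$; the paper uses $3\theta_l/4$.
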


\begin{proof}
Choose
\[
 0<\varepsilon<\tfrac18\min\{1,\alpha,\theta_1,\ldots,\theta_L\}.
\]
Let $Q$ be a finite symmetric set containing
$H\cup\bigcup_lHE_l$. Apply URP with invariance tolerance
$\varepsilon/|Q|$ and remainder tolerance $\varepsilon$.
The inner-collar estimate of Section~\ref{sec:actions-conventions} and
Lemma~\ref{lem:closed-to-open-castle} give an open castle
$(S_i,V_i)_{i=1}^m$ such that
\[
 |\partial_QS_i|<\varepsilon|S_i|,
 \qquad
 \sum_i|S_i|\mu(V_i)>1-\varepsilon
 \quad(\mu\in M_\Gamma(X)).
\]
Thus each shape is simultaneously strongly invariant for every
finite window specified before this application of URP.

For each $i,s$ retain precisely the primary group tiles wholly
contained in $S_i$:
\[
 \mathcal C_{i,s}=\{c\in\mathcal C_s:F_sc\subset S_i\},
 \qquad
 B_s=\bigcup_i\bigcup_{c\in\mathcal C_{i,s}}cV_i.
\]
These unions are finite because $e\in F_s$ implies $c\in S_i$.
All their primary levels are original castle levels, and different
retained tiles use different levels.  Hence the sets $fB_s$ are
pairwise disjoint.  If $t\in S_i\setminus\partial_HS_i$, the unique
primary group tile containing $t$ lies in $Ht\subset S_i$, by
\eqref{eq:tile-meeting-enlargement}.  The retained tiles therefore
cover more than $(1-\varepsilon)|S_i|$ positions of $S_i$.  By
invariance and disjointness,
\[
 \sum_s|F_s|\mu(B_s)
 \geq(1-\varepsilon)\sum_i|S_i|\mu(V_i)
 >(1-\varepsilon)^2>1-\alpha.
\]

Fix $l$.  For each $i$ let
\[
 I_{i,l}=\{v\in S_i:HE_lv\subset S_i\}.
\]
Since $HE_l\subset Q$, this set has cardinality greater than
$(1-\varepsilon)|S_i|$.  Use the labels
$\lambda=(i,v)$ for $v\in I_{i,l}$, and set
\[
 O_{l,(i,v)}=vV_i,
 \qquad
 \mathcal P_{l,(i,v)}
 =\{(s,cv^{-1}):c\in\mathcal C_s,\ F_sc\subset E_lv\}.
\]
The labeled regions are open and pairwise disjoint, and
\[
 \mu(O_l)>(1-\varepsilon)^2>1-2\varepsilon.
\]
Each pair in the pattern has $d=cv^{-1}$, so
\[
 F_sd=F_scv^{-1}\subset E_l,
 \qquad
 d(vV_i)=cV_i\subset B_s.
\]
The inclusion in $B_s$ follows because $F_sc\subset E_lv\subset S_i$.
The tiles in the pattern are disjoint, being right translates of
members of the fixed exact tiling.

To verify the hole bound, let $t\in E_l$ be a hole.  The unique
primary tile containing $tv$ cannot lie wholly in $E_lv$, for then
it would occur in the pattern.  Choose another point $t'$ of this
tile outside $E_lv$.  Equation~\eqref{eq:tile-meeting-enlargement}
gives $t'=htv$ for some $h\in H$, whence $ht\notin E_l$.
Thus $t\in\partial_HE_l$.  Notice that the stronger condition
$HE_lv\subset S_i$ also guarantees that every primary group tile
meeting $E_lv$ was retained in slicing $S_i$.  These statements
are entirely within a fixed group tiling and use no equivariant
choice of a global tiling on $X$.

We now choose the weights uniformly over invariant measures.
There are continuous functions $\varphi_n:X\to[0,1]$, increasing
pointwise to $1_{O_l}$, with compact supports in $O_l$.  If $O_l=X$
use the constant function one; otherwise cutoff functions of
$\operatorname{dist}(x,X\setminus O_l)$ have these properties.
Since $\mu(O_l)>1-2\varepsilon>1-\theta_l/4$, the increasing open
subsets
\[
 \left\{\mu\in M_\Gamma(X):
       \int_X\varphi_n\,d\mu>1-\theta_l/2\right\}
\]
cover the compact space $M_\Gamma(X)$.  One of them is the whole
space.  Fix its function $\varphi$, set $U_l=\{\varphi>0\}$, and
put $C_l=X\setminus U_l$.  The continuous integral function has
minimum strictly greater than $1-\theta_l/2$, so
\[
 \sup_\mu\mu(C_l)
 \leq 1-\inf_\mu\int_X\varphi\,d\mu<\theta_l/2.
\]
Choose $w_l$ equal to one on the compact set
$\operatorname{supp}(\varphi)$ and with support compactly contained
in $O_l$.  This is possible since that compact set is contained
in the open set $O_l$.  In particular $w_l=1$ on $U_l$, giving
$\{w_l<1\}\subset C_l$.

The finite open partition $(O_{l,\lambda})_\lambda$ of $O_l$
restricts to a finite clopen partition of $\operatorname{supp}(w_l)$.
Consequently every intersection
$\operatorname{supp}(w_l)\cap O_{l,\lambda}$ is compact.  Define
\[
 K_s=\bigcup_{l,\lambda}
     \bigcup_{(s,d)\in\mathcal P_{l,\lambda}}
     d\bigl(\operatorname{supp}(w_l)\cap O_{l,\lambda}\bigr).
\]
It is a finite union of compact subsets of $B_s$, and hence is
compactly contained in $B_s$.  Finally apply
Lemma~\ref{lem:frequency} to the finitely many
sets $C_l$ with bounds $3\theta_l/4$, which exceed their uniform
measure bounds.  Their visit counts are at most
$(3\theta_l/4)|F|<\theta_l|F|$. Enlarge the finitely many test
sets to one symmetric set containing $e$ and decrease their
tolerances to one common positive tolerance.
The asserted strict frequency estimate follows from
$\{w_l<1\}\subset C_l$.
\end{proof}

\begin{corollary}[Weighted exceptional macro centers]
\label{cor:weighted-bad-macro-centers}
In Proposition~\ref{prop:finite-shape-allocation}, let $a_l\geq0$
be fixed weights.  Suppose $F$ is beyond the uniform frequency
threshold, and let $\mathcal D_l$ be finite sets of centers such
that the macro tiles $E_lc$, $c\in\mathcal D_l$, are pairwise
disjoint subsets of $F$.  Then, for every $x\in X$,
\[
 \sum_l a_l
    |\{c\in\mathcal D_l:w_l(cx)<1\}|
 \leq |F|\sum_l a_l\theta_l.
\]
Thus the weighted exceptional-center budget can be made
arbitrarily small by prescribing the $\theta_l$ after the finite
macro shapes and weights have been fixed.
\end{corollary}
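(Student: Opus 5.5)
The plan is to sum the per-shape frequency estimates of item~4 of Proposition~\ref{prop:finite-shape-allocation}, after converting the count over centers into a count over positions of $F$. Fix $l$ and $x\in X$. I would compare the set of exceptional centers $\{c\in\mathcal D_l:w_l(cx)<1\}$ with the set $\{g\in F:w_l(gx)<1\}$. Since every macro tile $E_lc$ lies in $F$ and $e\in E_l$, each center $c=ec$ lies in $E_lc\subset F$. Disjointness of the tiles $E_lc$, $c\in\mathcal D_l$, makes the map $c\mapsto c$ injective into $F$. So the exceptional centers form a subset of $\{g\in F:w_l(gx)<1\}$, and hence
\[
 |\{c\in\mathcal D_l:w_l(cx)<1\}|\leq|\{g\in F:w_l(gx)<1\}|.
\]

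Because $F$ is beyond the uniform frequency threshold $(K_0,\delta_0)$, item~4 bounds the right-hand side by $\theta_l|F|$, uniformly in $x$ and for every $l$. Multiplying by $a_l\geq0$ and summing over $l$ gives the displayed inequality. The strict inequality weakens to $\leq$, which also covers the case where all $a_l$ vanish.

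For the final remark, the threshold $(K_0,\delta_0)$ is chosen after the allocation data, and the $\theta_l$ enter Proposition~\ref{prop:finite-shape-allocation} as free inputs. So once the $E_l$ and the weights $a_l$ are fixed, one may choose $\theta_l$ with $\sum_la_l\theta_l$ below any prescribed positive number.

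There is no real obstacle here. The only points that need checking are that the disjointness assumption and $e\in E_l$ make the centers distinct elements of $F$, and that the order of quantifiers (first the shapes and weights, then the $\theta_l$, then the threshold) is respected.
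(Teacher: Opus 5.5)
Your proof is correct and is essentially the paper's argument: since $e\in E_l$, every center lies in $F$. The count of $c\in\mathcal D_l$ with $w_l(cx)<1$ is then dominated by the frequency count over $F$. The paper phrases this through $\{w_l<1\}\subset C_l$, while you cite item~4 directly, which is the same estimate.

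One small correction: disjointness of the tiles is not needed for this step. $\mathcal D_l$ is a set, so $c\mapsto c$ is trivially injective, and as the paper remarks, disjointness matters only for the later volume and slot counts.
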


\begin{proof}
Since $e\in E_l$ and $E_lc\subset F$, every center belongs to $F$.
Moreover $w_l(cx)<1$ implies $cx\in C_l$.  The sum is therefore
bounded by
\[
 \sum_l a_l\sum_{g\in F}1_{C_l}(gx)
 <|F|\sum_l a_l\theta_l
\]
unless all positive weights vanish, when the conclusion is
immediate.  Disjointness is needed elsewhere for the volume and
slot counts, not for this frequency estimate.
\end{proof}

\begin{remark}[Orientation and order of choices]
\label{rem:allocation-orientation-order}
The placement attached to a label $(i,v)$ is $d=cv^{-1}$, rather
than $v^{-1}c$.  It satisfies $d(vy)=cy$ and sends the labeled
region directly into the primary base.  This equality is the
reason no abelian or finite-conjugacy assumption occurs.
The fixed primary tiling and all the finite macro shapes are
chosen before the small-error URP castle.  The closed exceptional
sets and their uniform frequency thresholds are obtained after
that castle and its compact cutoffs.  Any later finite-propagation
witness may enlarge the invariance requirements for the final
orbit window without changing this allocation.
\end{remark}

\section{Comparison and parametrized bundle embeddings}
\label{sec:comparison-bundle}

This section proves the analytic step independently of the construction of
the allocation.  In particular, the allocation assumptions in
Proposition~\ref{prop:allocated-bundle-embedding} are hypotheses of that
proposition.  The later allocation argument must establish them with the
stated uniform estimates.

We use a left action $T:\Gamma\curvearrowright X$, so that
$T_gT_h=T_{gh}$, and put
\[
 \alpha_g(f)(x)=f(T_{g^{-1}}x),\qquad
 A=C(X)\rtimes_{\alpha,r}\Gamma.
\]
For $x\in X$, the orbit representation on $\ell^2(\Gamma)$ is
\begin{equation}
\label{eq:orbit-representation-convention}
 (\pi_x(f)\xi)(v)=f(T_vx)\xi(v),\qquad
 (\pi_x(u_g)\xi)(v)=\xi(g^{-1}v).
\end{equation}
These formulas are covariant: the diagonal entry of
$\pi_x(u_g)\pi_x(f)\pi_x(u_g)^*$ at $v$ is
$f(T_{g^{-1}v}x)=\alpha_g(f)(T_vx)$.  They also satisfy
$\pi_x(u_g)\pi_x(u_h)=\pi_x(u_{gh})$.  Thus a Fourier polynomial
$\sum_{g\in K}A_gu_g$, applied to vectors supported in a finite set
$F\subset\Gamma$, has output supported in $KF$.  A tower with shape
$E$ and a return at $v$ has footprint $Ev$.  These are right translates
of tower shapes; both conventions will be retained below.

Fix $p\geq1$ and write $t=2p+1$.  We use the framed torus
$L\subset(-1/32,1/32)^t$ and its rank-$p$ complex bundle
$\mathcal H\subset L\times\mathbb C^{2p}$ from the external torus-compression
theorem (Theorem~\ref{input:compression}).  Let $Q:L\to M_{2p}(\mathbb C)$ be its orthogonal projection.
For $0<a<1/16$ and $n\geq1$, write
\[
 Z_{an}(t,n)=\{z\in([-2,2]^t)^n:
              \#\{\nu:z_\nu\neq0\}\leq an\}.
\]
The support count here concerns whole $t$-dimensional slots.

\begin{proposition}[An allocated comparison witness gives one bundle embedding]
\label{prop:allocated-bundle-embedding}
Suppose that $r\geq0$ is a comparison bound for $A$, with normalized
traces and unnormalized matrix traces.  Let $s=1,\ldots,S$ index a
finite open castle with nonempty finite shapes $F_s\subset\Gamma$ and
bases $B_s\subset X$: all the sets $T_aB_s$, for $a\in F_s$ and
$1\leq s\leq S$, are pairwise disjoint.

For each $s$, fix positive integers $g_s,R_s$, a compact set
$K_s\subset B_s$, and continuous maps
\[
 F_{s,j}:X\longrightarrow[-1,1]^t,\qquad 1\leq j\leq g_s.
\]
Assume that
\begin{equation}
\label{eq:castle-tracial-rank-gap}
 \sum_{s=1}^S(R_s-pg_s)\mu(B_s)>r
 \quad\text{for every invariant probability measure }\mu.
\end{equation}

Then there is a finite set $K\subset\Gamma$ with the following
property.  Let $F\subset\Gamma$ be any nonempty finite window, and
suppose that there are $n\geq1$ continuous maps
$f_\nu:X\to[-1,1]^t$, continuous weights
$w_\nu:X\to[0,1]$, and locally constant maps
\[
 \ell_\nu:\{w_\nu>0\}\longrightarrow
 \{(s,v,j):1\leq s\leq S,\ v\in F,\ 1\leq j\leq g_s\}
 \qquad(1\leq\nu\leq n)
\]
such that:
\begin{enumerate}
\item whenever $\ell_\nu(x)=(s,v,j)$,
\begin{equation}
\label{eq:allocated-slot-agreement}
 T_vx\in K_s,\qquad f_\nu(x)=F_{s,j}(T_vx);
\end{equation}
\item for every $x$, the labels $\ell_\nu(x)$ of the slots with
$w_\nu(x)>0$ are pairwise distinct;
\item for some $\beta\geq0$,
\begin{equation}
\label{eq:allocated-slot-exception-count}
 \#\{\nu:w_\nu(x)<1\}\leq\beta n\qquad(x\in X).
\end{equation}
\end{enumerate}
Let $q_F$ be an integer such that
\begin{equation}
\label{eq:allocated-output-rank-count}
 \sum_{s=1}^S R_s\#\{v\in KF:T_vx\in B_s\}\leq q_F
 \qquad(x\in X).
\end{equation}
Put $f=(f_1,\ldots,f_n)$ and
\[
 \mathcal T_F=\{(x,z)\in X\times Z_{an}(t,n):
                  f(x)-z\in L^n\}.
\]
In place of Equation~\eqref{eq:allocated-slot-exception-count}, it
suffices to assume the weaker condition
\begin{equation}
\label{eq:allocated-slot-exception-count-on-locus}
 \#\{\nu:z_\nu=0,\ w_\nu(x)<1\}\leq\beta n
 \qquad((x,z)\in\mathcal T_F).
\end{equation}
The rank-$pn$ bundle
\[
 \mathcal E_F=\bigoplus_{\nu=1}^n
           \mathcal H_{f_\nu(x)-z_\nu}
 \longrightarrow\mathcal T_F
\]
admits one continuous bundle monomorphism into the trivial bundle of
rank
\begin{equation}
\label{eq:allocated-bundle-target-rank}
 q_F+2p\lceil(a+\beta)n\rceil.
\end{equation}
In particular, the hypothesis of the external torus-compression
theorem holds whenever this integer is strictly less than $2pn$.

One may take
\begin{equation}
\label{eq:castle-output-packing-bound}
 q_F=\left\lfloor
       \max_{1\leq s\leq S}\frac{R_s}{|F_s|}\,
       \left|\left(\bigcup_{s=1}^SF_s\right)KF\right|
      \right\rfloor.
\end{equation}
The cutoffs, Fourier witness, and its propagation set $K$ are chosen
before $F$, $n$, and the slot allocation.
\end{proposition}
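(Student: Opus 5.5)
The plan is to mimic Lemma~\ref{lem:ah-embedding}, with the AH frames replaced by castle levels and the finite-stage witness replaced by a Fourier polynomial in $A$ with finite support $K$.

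\textbf{Positive elements.} Choose the tubular data $U,\pi_L,\zeta,\widetilde Q$ and the threshold $\delta$ exactly as in \eqref{eq:ah-nearby-projection}. For each $s$ choose a cutoff $\kappa_s\in C(X)$ with values in $[0,1]$, equal to one on $K_s$ and supported in $B_s$. Set
\[
 P_0=\bigoplus_s\operatorname{diag}_{j=1}^{g_s}\bigl(\kappa_s\,(\zeta\widetilde Q)(F_{s,j})\bigr)\in M_{2p\sum g_s}(C(X))_+,
\]
and let $P_1$ be the direct sum of $R_s$ copies of a function $h_s$ with $\{h_s>0\}=B_s$.

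For an invariant $\mu$ we have $d_\mu(P_0)\leq\sum_s pg_s\mu(B_s)$ and $d_\mu(P_1)=\sum_sR_s\mu(B_s)$. By \eqref{eq:castle-tracial-rank-gap}, together with the fact that traces are the measures $\mu\circ\mathbb E$, we get $d_\tau(P_0)+r<d_\tau(P_1)$. Hence $P_0\bpre P_1$. One concern is that the gap must be strict uniformly; compactness of $\Mu$ handles this, and the strict gap is all the comparison bound requires. We then get a rectangular $V$ with $\|V^*P_1V-P_0\|<1/2$, approximate it by Fourier polynomials, and let $K$ be the finite union of their supports. All of this precedes the choice of $F$.

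\textbf{Fiberwise matrices.} Fix $x$ and apply the orbit representation $\pi_x$ of \eqref{eq:orbit-representation-convention}. Then $\pi_x(P_0)$ is block diagonal over $v\in\Gamma$, with $v$-block $P_0(T_vx)$. A slot $\nu$ with $w_\nu(x)>0$ and label $(s,v,j)$ determines the coordinate group $(v,s,j)$. Since $T_vx\in K_s$, we have $\kappa_s(T_vx)=1$, and the entry there is $(\zeta\widetilde Q)(f_\nu(x))$.

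Define $\Xi_{(x,z)}$ by placing $\chi(z_\nu)w_\nu(x)(\zeta\widetilde Q)(f_\nu(x))\eta_\nu$ in coordinate group $\ell_\nu(x)$. Local constancy of $\ell_\nu$ and the factor $w_\nu$ make this continuous. Distinctness of the labels makes the groups orthogonal. Whenever $\chi(z_\nu)>0$, the vector lies in the eigenvalue-one range, as in the AH proof.

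Next let $W(x)$ be the compression of $\pi_x(V)$, restricted to $\ell^2(F)$-supported inputs, followed by the projection onto the range of $\pi_x(P_1)$ on $KF$. Concretely, we take the coordinates of the $R_s$ copies at positions $v\in KF$ with $T_vx\in B_s$. Because the base-position set varies with $x$, we instead use the continuous weights $h_s^{1/2}(T_vx)$ and compose with a fixed linear map. Lemma~\ref{lem:weighted-coordinate-compression}, applied with weights $R_s$ indexed by $(s,v)\in\{1,\dots,S\}\times KF$ and target $\mathbb C^{q_F}$, gives such a map. It is injective on every admissible support set by \eqref{eq:allocated-output-rank-count}, and it is applied after using disjointness of the castle levels: each $v$ carries at most one $s$.

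\textbf{Main obstacle.} The main obstacle is continuity here. When $h_s(T_vx)\to0$ the coordinate disappears, so we need a lower bound on the composition to persist. I would handle this through the estimate $\|W\xi\|^2\geq\frac12\|\xi\|^2$, exactly as in \eqref{eq:ah-simultaneous-injectivity}: the weighted norm of the output equals $\langle\pi_x(V^*P_1V)\xi,\xi\rangle$, and injectivity of the compression map on the support converts this into a uniform positive lower bound by compactness over finitely many supports. With $\Phi_1=W\Xi$, the kernel of $\Phi_1$ is spanned by the slots with $\chi(z_\nu)w_\nu(x)=0$.

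\textbf{Exceptional slots and conclusion.} By \eqref{eq:allocated-slot-exception-count-on-locus}, the exceptional slots are of two kinds: those with $z_\nu\neq0$, at most $an$ of them, and those with $z_\nu=0$ and $w_\nu<1$, at most $\beta n$ of them. Define
\[
 \Psi_\nu=\max\bigl(1-\chi(z_\nu),\,1-w_\nu(x)\bigr)\eta_\nu,
\]
which is continuous, and compose with the fixed map $D$ of Lemma~\ref{lem:weighted-coordinate-compression} into $\mathbb C^{2p\lceil(a+\beta)n\rceil}$. Then $\Phi_1\oplus D\Psi$ is injective, and normalization as in the AH proof yields the monomorphism with target rank \eqref{eq:allocated-bundle-target-rank}.

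For \eqref{eq:castle-output-packing-bound}, the levels $T_vx\in B_s$ with $v\in KF$ occur along disjoint tower footprints $F_sv'$. Each $B_s$-hit $v$ has $F_sv\subseteq(\bigcup F_s)KF$, and these footprints are disjoint. The count is therefore at most $\max_s R_s/|F_s|$ times that cardinality.
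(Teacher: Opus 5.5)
Your proposal is correct and follows the paper's proof essentially step for step. It uses the same cutoff elements $P_0,P_1$ and tracial computation, and one Fourier witness with support $K$ fixed before $F$. It then takes the window matrix $\pi_x(P_1)^{1/2}\pi_x(V)$ followed by a fixed map from Lemma~\ref{lem:weighted-coordinate-compression}; this map is injective because the range lies in the coordinate subspace indexed by $\{(s,v):T_vx\in B_s\}$, which has weight at most $q_F$. The remaining steps are the same too: the weighted alignment map $\Xi$, a sparse exceptional map (your factor $\max(1-\chi(z_\nu),1-w_\nu(x))$ works as well as the paper's $1-\chi(z_\nu)w_\nu(x)$), and the disjoint-footprint packing count.

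Two of your asides are unnecessary. Fiberwise injectivity of $\Phi_1\oplus\Phi_2$ already suffices, so no uniform lower constant or separate continuity argument is needed. Your remark that each $v$ carries at most one $s$ does not follow from the castle hypothesis unless $e\in F_s$, since the bases $B_s$ themselves need not be disjoint. Nothing depends on it, because \eqref{eq:allocated-output-rank-count} bounds the support weight directly.
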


\begin{proof}
\emph{Positive functions and one comparison witness.}
For each $s$, choose continuous functions
$\rho_{0,s},\rho_{1,s}:X\to[0,1]$ such that
\[
 \rho_{0,s}|_{K_s}=1,\qquad
 \operatorname{supp}(\rho_{0,s})\subset B_s,
 \qquad \{\rho_{1,s}>0\}=B_s.
\]
The first choice uses compact containment.  The second can be obtained
from the distance to the complement of $B_s$; if $B_s=X$, take the
constant function one.

Choose a tubular neighborhood $V$ of $L$ in $(-1,1)^t$, a retraction
$\pi_L:V\to L$, and a continuous function $\psi:[-1,1]^t\to[0,1]$
with compact support in $V$ and equal to one on a neighborhood of $L$.
Put $\widetilde Q=Q\circ\pi_L$ on $V$, and extend
$\psi\widetilde Q$ by zero.  For some $\delta>0$, compactness gives
\begin{equation}
\label{eq:castle-nearby-projection}
 y\in L,\ \|u-y\|_\infty<\delta
 \ \Longrightarrow
 u\in V,\quad\psi(u)=1,\quad
 \|\widetilde Q(u)-Q(y)\|<1/2.
\end{equation}
Consequently $\widetilde Q(u)$ restricts to an injective map from
$\mathcal H_y$ into $\operatorname{ran}\widetilde Q(u)$.

Write $D_0=2p\sum_sg_s$ and $D_1=\sum_sR_s$, and define
\begin{align}
 P_0(x)&=\bigoplus_{s=1}^S\bigoplus_{j=1}^{g_s}
       \rho_{0,s}(x)\psi(F_{s,j}(x))
                 \widetilde Q(F_{s,j}(x)),
       \label{eq:castle-source-positive}\\
 P_1(x)&=\bigoplus_{s=1}^S\rho_{1,s}(x)1_{R_s}.
       \label{eq:castle-target-positive}
\end{align}
Thus $P_0\in M_{D_0}(C(X))_+$ and
$P_1\in M_{D_1}(C(X))_+$.  For a trace $\tau$ on $A$, its restriction
to $C(X)$ is integration against an invariant probability measure
$\mu$.  For positive matrix functions, functional calculus and bounded
convergence give
\[
 d_\tau(P)=\int_X\operatorname{rank}P(x)\,d\mu(x).
\]
It follows that
\[
 d_\tau(P_0)+r
 \leq\sum_s pg_s\mu(B_s)+r
 <\sum_sR_s\mu(B_s)=d_\tau(P_1).
\]
By comparison, $P_0\precsim_A P_1$.  Taking a rectangular corner of a
Cuntz witness and approximating its entries by Fourier polynomials
gives
\begin{equation}
\label{eq:castle-finite-propagation-witness}
 T_0=\sum_{g\in K}A_gu_g\in M_{D_1,D_0}(A),\qquad
 \|T_0^*P_1T_0-P_0\|<1/2,
\end{equation}
where $K$ is finite and each $A_g$ is a continuous matrix function.
All objects in this paragraph are independent of the eventual window
and slot allocation.

\emph{A finite matrix with simultaneous injectivity.}
Restrict the input of $\pi_x(P_1)^{1/2}\pi_x(T_0)$ to $F$.  The output
is supported in $KF$, giving a continuous finite matrix
\[
 W_F(x):\bigoplus_{v\in F}\mathbb C^{D_0}
       \longrightarrow
       \bigoplus_{v\in KF}\bigoplus_{s=1}^S\mathbb C^{R_s}.
\]
For an input vector $\xi$ satisfying $\pi_x(P_0)\xi=\xi$,
Equation~\eqref{eq:castle-finite-propagation-witness} implies
\begin{equation}
\label{eq:castle-window-injectivity}
 \|W_F(x)\xi\|^2\geq\tfrac12\|\xi\|^2.
\end{equation}
No coordinatewise assertion is substituted for this estimate: it holds
on the entire direct sum of the selected genuine projection ranges.

The output group $(s,v)$ of $W_F(x)$ vanishes unless
$T_vx\in B_s$.  Equation~\eqref{eq:allocated-output-rank-count} and
Lemma~\ref{lem:weighted-coordinate-compression} therefore give one
fixed linear map
\[
 C_F:\bigoplus_{v\in KF}\bigoplus_s\mathbb C^{R_s}
                    \longrightarrow\mathbb C^{q_F}
\]
injective on the range of $W_F(x)$ for every $x$.  The map $C_F$
depends on the finite window, but neither on $x$ nor on $z$.

\emph{The aligned bundle map.}
On $\mathcal T_F$, write $y_\nu=f_\nu(x)-z_\nu\in L$.
Choose a continuous function
$\chi:[-2,2]^t\to[0,1]$ with $\chi(0)=1$ and support contained in
$\{z:\|z\|_\infty<\delta\}$, and put
\[
 \theta_\nu(x,z)=w_\nu(x)\chi(z_\nu).
\]
Where $\theta_\nu>0$, write $\ell_\nu(x)=(s,v,j)$.  By
Equations~\eqref{eq:allocated-slot-agreement} and
\eqref{eq:castle-nearby-projection},
$\widetilde Q(F_{s,j}(T_vx))$ injects $\mathcal H_{y_\nu}$ into the $j$th
genuine projection range of the $s$th block of $P_0(T_vx)$.
Define
\[
 \Xi:\mathcal E_F\longrightarrow
              \mathcal T_F\times\bigoplus_{v\in F}\mathbb C^{D_0}
\]
by placing
\[
 \theta_\nu(x,z)\widetilde Q(F_{s,j}(T_vx))\eta_\nu
\]
in input group $(v,s,j)$ for $\eta_\nu\in \mathcal H_{y_\nu}$ when
$\theta_\nu>0$, and by placing zero when $\theta_\nu=0$.  Distinct
active slots have distinct groups by the allocation assumption.

This is continuous on the entire test locus.  On the positive-weight
region its formula and label are continuous.  At its complement, the
norm of each contribution is at most
$\theta_\nu\|\eta_\nu\|$, so the contribution tends to zero,
regardless of a change of label.  Equivalently, use the matrices
\[
 \theta_\nu(x,z)
 (\psi\widetilde Q)(F_{s,j}(T_vx))Q(y_\nu)
\]
on the ambient $\mathbb C^{2p}$, extended by zero in the inactive
groups.  There are only finitely many labels, and their uniformly
bounded entries vanish at every transition of label.  This proves
continuity without making any choice of a sparse-support projection.

The image of $\Xi_{(x,z)}$ lies in the identity subspace of
$\pi_x(P_0)$.  Hence the first map
\[
 \Phi_1(x,z)=C_FW_F(x)\Xi_{(x,z)}
           :\mathcal E_{F,(x,z)}\to\mathbb C^{q_F}
\]
has kernel exactly
\begin{equation}
\label{eq:allocated-first-map-kernel}
 \ker\Phi_1(x,z)=\bigoplus_{\nu:\theta_\nu(x,z)=0}\mathcal H_{y_\nu}.
\end{equation}
Indeed, $W_F(x)$ is injective on the image of $\Xi_{(x,z)}$ by
Equation~\eqref{eq:castle-window-injectivity}, $C_F$ is injective on
the range of $W_F(x)$, and the distinct input groups identify the
kernel of $\Xi$ with the displayed direct sum.

\emph{The sparse exceptional map.}
Use the fixed embeddings $\mathcal H_{y_\nu}\subset\mathbb C^{2p}$ to define
\[
 \Psi_{(x,z)}:\mathcal E_{F,(x,z)}
                  \longrightarrow\bigoplus_{\nu=1}^n\mathbb C^{2p},
 \qquad (\eta_\nu)_\nu\longmapsto
                  ((1-\theta_\nu(x,z))\eta_\nu)_\nu.
\]
If $1-\theta_\nu\neq0$, then either $z_\nu\neq0$, or both
$z_\nu=0$ and $w_\nu(x)<1$.  These disjoint possibilities occur in
at most $an$ and $\beta n$ slots, respectively, by the sparse
condition and Equation~\eqref{eq:allocated-slot-exception-count-on-locus}.
The original Equation~\eqref{eq:allocated-slot-exception-count} implies
the latter condition.  Thus the range
of $\Psi_{(x,z)}$ is supported on at most
$h_F=\lceil(a+\beta)n\rceil$ coordinate groups.  Apply
Lemma~\ref{lem:weighted-coordinate-compression}, now with every group
dimension equal to $2p$, to obtain a fixed map
\[
 D_F:\bigoplus_{\nu=1}^n\mathbb C^{2p}
                            \longrightarrow\mathbb C^{2ph_F}
\]
injective on every such support subspace.  Put $\Phi_2=D_F\Psi$.
The map $\Phi_1\oplus\Phi_2$ is fiberwise injective: a vector killed
by $\Phi_1$ has components only where $\theta_\nu=0$ by
Equation~\eqref{eq:allocated-first-map-kernel}; on these components
$\Psi$ is the identity, so injectivity of $D_F$ kills none of them.
Both maps are continuous, including at changes of sparse support.
They give the target rank in
Equation~\eqref{eq:allocated-bundle-target-rank}.  Locally the Gram
matrix of the resulting fiber map is positive definite; its inverse
square root makes the image a continuous rank-$pn$ subbundle.  The
test locus is compact, as required by the external theorem.

\emph{The output packing estimate.}
For fixed $x$, let
$\Lambda_s(x)=\{v\in KF:T_vx\in B_s\}$.  The sets
$F_sv$, with $s$ arbitrary and $v\in\Lambda_s(x)$, are pairwise
disjoint.  To verify this, suppose $av=bw$ with $a\in F_s$ and
$b\in F_t$.  Then the common point $T_{av}x=T_{bw}x$ belongs to both
tower levels $T_aB_s$ and $T_bB_t$.  Disjointness of levels forces
$(s,a)=(t,b)$, and cancellation then forces $v=w$.  Therefore
\[
 \sum_s|F_s|\,|\Lambda_s(x)|
 \leq\left|\left(\bigcup_sF_s\right)KF\right|.
\]
Multiplication by $\max_sR_s/|F_s|$ bounds the left-hand side of
Equation~\eqref{eq:allocated-output-rank-count}.  That left-hand side
is an integer, so its floor gives
Equation~\eqref{eq:castle-output-packing-bound}.
\end{proof}

\begin{remark}[Boundary padding is charged to the sparse parameter]
\label{rem:boundary-padding-sparse-charge}
The weaker exceptional-count assumption is useful when an allocation
has a variable number of real packets and is padded to a fixed number
of slots.  Give a padded slot weight zero and the fixed value
$b\in\partial[-1,1]^t$.  At a point of the torus test locus, this slot
must have $z_\nu\neq0$, because $b\notin L$.  Consequently it is
already charged to the sparse-parameter count and need not be charged
again to $\beta n$.  If padding is specified by a locally constant
macro-pattern wherever a macro weight $w$ is positive, the effective
slot weight $w$ on real slots and zero on padded slots is continuous:
the real-slot indicator is locally constant on that open region and
the product tends to zero where $w$ vanishes.  Slots in macros whose
weight is less than one may still be charged to the exceptional
count.  This observation concerns the analytic rank estimate; the
later cover-dimension count must separately subtract padded boundary
slots, which carry no real colors.
\end{remark}


\section{The individual-cover bound}
\label{sec:individual-cover}
\begin{theorem}\label{thm:cover-bound}
Under the free minimal URP hypotheses of Theorem~\ref{thm:actions-main}, every finite comparison bound $r$ for $A$ satisfies $d_\UU\leq2r$ for every finite open cover $\UU$.
\end{theorem}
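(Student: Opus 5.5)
We argue by contradiction, following the pattern of Theorem~\ref{thm:lower}. Fix a finite open cover $\UU$ with $d=d_\UU>2r$. Choose $p\geq1$ with $t=2p+1$ and $c=d/(t+1)$ satisfying $pc>r$. Then choose $0<a<1/16$ with $(1-11a)pc>r$, and small auxiliary constants $\varepsilon,\delta,\eta,\alpha$ of the same form as in the AH proof. By Lemma~\ref{lem:coverlimit} and \cite[Theorem 4.3]{DHZ}, fix an exact tiling \eqref{eq:fixed-primary-group-tiling} whose shapes are so invariant and large that $m(F_s)\leq(d+\varepsilon)|F_s|$ for every $s$. For each shape, Lemma~\ref{lem:nerve} gives a colored nerve map $\psi_s:X\to S_s$ for $\UU^{F_s}$ of dimension $m_s=m(F_s)$. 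Lemma~\ref{lem:packets} then gives $g_s=\lfloor(m_s+1)/(t+1)\rfloor$ packet maps $F_{s,j}=\mathfrak f_j\circ\psi_s$. Set $R_s=\lfloor(2-10a)pc|F_s|\rfloor$. Using $g_s\leq(c+\varepsilon)|F_s|$ and the URP allocation mass bound $\sum_s|F_s|\mu(B_s)>1-\alpha$ from Proposition~\ref{prop:finite-shape-allocation}, we obtain the tracial gap \eqref{eq:castle-tracial-rank-gap}. This is the main reason to normalize $R_s$ by $|F_s|$ rather than by $g_s$.

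Next we choose macro shapes $E_l$, each strongly $H$-invariant with $|\partial_HE_l|\leq\eta|E_l|$, and apply Proposition~\ref{prop:finite-shape-allocation}. Proposition~\ref{prop:allocated-bundle-embedding} then fixes its propagation set $K$. We take a huge window $F$ that is invariant beyond all thresholds, including $(K_0,\delta_0)$ and the threshold of Lemma~\ref{lem:coverlimit}. The slots are indexed as follows: for each macro center $c$ of $E_l$ inside a fixed exact tiling of $F$ (or simply of $\Gamma$ by the $E_l$), and each pattern position $(s,d)$, there are $g_s$ packet slots. The slot values are $F_{s,j}(T_{dc}x)$, with weight $w_l(cx)$ and a locally constant label from $\lambda$. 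Since patterns vary with $\lambda$, we pad to a fixed number $n$ of slots per macro with boundary values, as in Remark~\ref{rem:boundary-padding-sparse-charge}. The exceptional count $\beta$ is bounded by Corollary~\ref{cor:weighted-bad-macro-centers} and can be made $\ll a$. The output count $q_F$ from \eqref{eq:castle-output-packing-bound} is at most $(2-10a)pc|F|(1+o(1))$. On the other hand, $n\approx\max_\lambda\sum g_s\gtrsim(c-\varepsilon')|F|$ on high regions, so the target rank is below $2pn$. A high/low split, as in the AH proof, is needed where the real packet count is too small, but there the cover dimension is already below $(d-2\delta)|F|$.

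Theorem~\ref{input:compression} then gives $u$ and $G:X^u\to C_t^{un}$ with at least $(a/8)un$ boundary slots, preserving the original boundary slots, in particular the padded ones. Applying Lemma~\ref{lem:packets} factorwise to the primary nerves produces $\mathcal R:X^u\to\prod S_s$, with the vertex supports preserved. Its carrier dimension is at most
\[
u\Bigl(\sum m_s+\text{holes}\cdot m_0\Bigr)-\tfrac a8un+\text{padding}.
\]
The padding boundary slots must be subtracted from the saving, since they carry no colors. I expect this bookkeeping to be the main obstacle: showing that the saving counted on real slots exceeds about $(a/16)n$. I would handle it by choosing $a$-sparse padding, that is, by making patterns nearly full using the hole bound $J_{l,\lambda}\subset\partial_HE_l$. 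The holes $J$, tiles lost to the URP remainder, and the collar of $F$ are covered by the raw cover with cost $m_0$ per element, and this is an $O(\eta+\alpha)$ fraction.

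Finally, as in Lemma~\ref{lem:ah-grouping}, we group the translates. For a much larger Følner set $F'$ we tile it by right translates of $u$-tuples of copies of $F$; concretely, we choose $u$ disjoint translates $Fc_1,\dots,Fc_u$ and compose $\mathcal R$ with $x\mapsto(T_{c_i}x)$. Remainders are paid at rate $m_0$. Lemma~\ref{lem:refinement} then yields $m(F')\leq(d-\delta)|F'|$, contradicting $m(F')/|F'|\to d$ by Lemma~\ref{lem:coverlimit}. Hence $d_\UU\leq2r$.
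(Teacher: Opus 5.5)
Your architecture matches the paper's: primary tiling, padded macro slots, URP allocation, one Fourier witness fed to Proposition~\ref{prop:allocated-bundle-embedding}, torus compression, packet reconstruction, and disjoint translates $Fc_1,\dots,Fc_u$. Normalizing $R_s$ by $|F_s|$ is harmless. The paper actually takes $R_s=\lfloor(2-10a)pg_s\rfloor$, which works equally well because Lemma~\ref{lem:coverlimit} gives the two-sided density $c-\varepsilon\le g_s/|F_s|\le c+\varepsilon$.

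The genuine gap is the reconstruction on macros where the allocation fails. Which primary factors a macro at center $c$ carries is fixed by its pattern label $\lambda$. The pattern branch is usable only where $w_l(cx)>0$, and the set of centers with $w_l(cx)<1$ changes with $x$. So your map $\mathcal R:X^u\to\prod S_s$ is not globally defined. ``Covering the tiles lost to the URP remainder by the raw cover'' does not yield a continuous map into a fixed polyhedron; something must fade the pattern branch out continuously.

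The paper does this macro by macro with a join. It maps into $P_l=A_l*S_l^{\mathrm{fb}}$, where $A_l=\bigsqcup_\lambda A_{l,\lambda}$ and $S_l^{\mathrm{fb}}$ is a fixed fallback nerve of $\UU^{E_l}$, using branch weights $w_l(cc_bx)$ and $1-w_l(cc_bx)$. Where $0<w_l<1$ the carrier contains both branches plus one. A bad macro can therefore cost the baseline plus $\mathcal D(\UU^{E_l})+1\le B_l$, and its boundary slots must be dropped from the saving. This cost is paid pointwise, so your ``$O(\eta+\alpha)$ fraction'' is the wrong bookkeeping, since $\alpha$ controls only invariant-measure mass. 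What controls it is the uniform frequency bound of Lemma~\ref{lem:frequency} and Corollary~\ref{cor:weighted-bad-macro-centers} on bad macro centers. The thresholds, $\theta_l\le ac/(4096LB_l)$ in the paper, must be chosen \emph{after} the macro shapes, and hence the $B_l$, are fixed. Relatedly, the slot functions must be continuous on all of $X$. The paper prescribes packet and padding values on the disjoint compact sets $\operatorname{supp}(w_l)\cap O_{l,\lambda}$ and extends by Tietze.

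Two further corrections. First, the high/low split imported from the AH proof has no counterpart here. There is one base space and one application of compression, so there are no blocks to exempt, and your claim that regions with few real packets already have small cover dimension is unsupported. No split is needed: the hole bound $J_{l,\lambda}\subset\partial_HE_l$ and the two-sided density make every pattern nearly full, so padding is a small fraction of the slots and the slot count is uniformly about $c|F|$.

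Second, in the last step you cannot in general tile a larger F\o lner set $F'$ by translates of the prescribed shape $F$. What you need, and what the paper uses, is that $F_u=\bigsqcup_bFc_b$ itself passes every left-invariance test that $F$ passes, since $|kF_u\triangle F_u|\le u|kF\triangle F|$ and $|F_u|=u|F|$. This neutralizes the exponent $u$, which compression produces only after $F$ is fixed. It gives the contradiction with \eqref{eq:urp-ow-reserve} directly for $F_u$, with no remainders.
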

\begin{proof}
It suffices to consider $r>0$: if zero is a comparison bound, every
positive $r$ is also a comparison bound, and we let $r\downarrow0$.
We argue by contradiction. We first fix the parameters and rank
estimates, then construct the simultaneous bundle embedding and
reconstruct a subordinate cover of smaller order.

\subsection{Parameters and quantitative estimates}\label{sec:actions-parameters}

Let $\mathcal U$ be a finite open cover and write
\[
 d=\lim_F\frac{\DD(\mathcal U^F)}{|F|}>2r,
 \qquad M=\max\{1,\DD(\mathcal U)\}.
\]
Here the limit is the Ornstein--Weiss cover rate, and $r\geq0$ is a
finite comparison bound.  Choose $p\geq1$ and then $0<a<1/16$ such that,
with $t=2p+1$ and $c=d/(t+1)$,
\[
 pc>r,
 \qquad G=(1-11a)pc-r>0.
\]
The following constants are fixed before either tiling is selected:
\begin{align*}
 \kappa=b&=\frac{a}{128},
 &\Delta&=\frac{ac}{32},
 &\tau&=\frac{G}{4(r+G)},\\
 \varepsilon&=\min\left\{\frac c4,
                  \frac{ac}{4096},\frac{\kappa c}{16}\right\},
 &c_-&=c-\varepsilon,
 &c_+&=c+\varepsilon,\\
 \eta&=\min\left\{\frac{\kappa}{4},\frac{ac}{4096M}\right\},
 &\delta_{\mathrm{out}}&=\min\left\{\frac a{512},
                                      \frac{ac}{4096M}\right\},
 &\delta_{\mathrm{prop}}&=\frac a{512}.
\end{align*}
In particular $c_->0$, $\kappa,b<a/64$, and $0<\tau\leq1/4$.
Fix also an Ornstein--Weiss invariance test ensuring
\begin{equation}\label{eq:urp-ow-reserve}
 \frac{\DD(\mathcal U^E)}{|E|}>d-\frac\Delta2
\end{equation}
for every finite nonempty set $E$ passing that test.  This test depends
only on $\mathcal U$ and the already fixed $\Delta$.

\paragraph{Primary shapes and their integer ranks.}
Choose an exact finite-shape primary tiling with shapes $F_s$ so invariant
and so large that
\[
 \left|\frac{\DD(\mathcal U^{F_s})}{|F_s|}-d\right|
       \leq\frac\varepsilon2,
 \qquad
 |F_s|\geq
 \max\left\{\frac4\varepsilon,\frac{4096}{apc}\right\}.
\]
Put $m_s=\DD(\mathcal U^{F_s})$,
\[
 g_s=\left\lfloor\frac{m_s+1}{t+1}\right\rfloor,
 \qquad R_s=\lfloor(2-10a)pg_s\rfloor.
\]
The preceding requirements give
\begin{equation}\label{eq:urp-primary-densities}
 c_-\leq\frac{g_s}{|F_s|}\leq c_+,
 \qquad \frac{m_s}{|F_s|}\leq d+\varepsilon.
\end{equation}
Moreover,
\begin{align*}
 \frac{R_s-pg_s}{|F_s|}
 &\geq(1-10a)p(c-\varepsilon)-\frac1{|F_s|}\\
 &\geq r+G+apc-\frac{apc}{4096}-\frac{apc}{4096}
 >r+G.
\end{align*}
Thus all floor errors are absorbed before the dynamical castle is chosen. Fix the colored primary nerves $\varphi_s:X\to S_s$ for $\UU^{F_s}$, of dimensions at most $m_s$, and their packet maps $F_{s,j}:X\to C_t$ now. Fix also a one-site nerve $\varphi_0:X\to S_0$ for $\UU$.

\paragraph{Macro shapes and padding.}
Having fixed the primary tiling, choose an exact macro tiling with shapes
$E_1,\ldots,E_L$ satisfying the allocation lemma's inner-boundary estimate
\[
 |\partial_H E_l|\leq\eta|E_l|,
 \qquad H=\bigcup_sF_sF_s^{-1},
\]
and the additional requirements
\[
 \DD(\mathcal U^{E_l})\leq(d+\varepsilon)|E_l|,
 \qquad |E_l|\geq\frac4{\kappa c}.
\]
Set
\[
 n_l=\lceil c_+|E_l|\rceil,
 \qquad B_l=(d+\varepsilon+M\eta)|E_l|+1.
\]
Fix fallback nerves $\varphi_l^{\mathrm{fb}}:X\to S_l^{\mathrm{fb}}$ for $\UU^{E_l}$ with dimensions at most $\DD(\UU^{E_l})$. For any pattern $\lambda$ furnished by the allocation below, let
$h_{l,\lambda}$ be the number of real packet slots in its disjoint primary
tiles.  The allocation lemma and \eqref{eq:urp-primary-densities} give
\[
 c_-(1-\eta)|E_l|\leq h_{l,\lambda}
      \leq c_+|E_l|\leq n_l.
\]
Consequently its number of padded slots satisfies
\begin{align*}
 \frac{n_l-h_{l,\lambda}}{n_l}
 &\leq\frac{2\varepsilon+c\eta+|E_l|^{-1}}{c_+}\\
 &\leq\frac{2\varepsilon}{c}+\eta+\frac{1}{c|E_l|}
 \leq\frac{\kappa}{8}+\frac{\kappa}{4}+\frac{\kappa}{4}
 <\kappa.
\end{align*}
Padded slot functions are fixed vectors in $\partial C_t$.  On the torus
test locus a padded slot therefore has nonzero sparse parameter.  Give each padded slot effective weight zero. On the torus test locus it is already included in the sparse-parameter exception budget $an$, by the weaker condition in Proposition~\ref{prop:allocated-bundle-embedding}. Padding is charged separately in the covering-order estimate below.

\paragraph{Closed exceptional sets are chosen last.}
All $n_l$ and $B_l$ are now fixed finite constants.  Define
\begin{equation}\label{eq:urp-closed-budget}
 \theta_l=\min\left\{
 \frac{b c_+(1-\delta_{\mathrm{out}})}{4L n_l},
 \frac{ac}{4096L B_l}\right\}>0.
\end{equation}
Only at this point invoke the allocation lemma, with primary sliced-tower
mass greater than $1-\tau$, and with compactly supported masks $w_l$ whose
exceptional closed sets $C_l\supset\{w_l<1\}$ satisfy
\[
 \sup_{\mu\in M_\Gamma(X)}\mu(C_l)<\frac{\theta_l}{2}.
\]
The uniform closed-set orbit estimate then supplies an invariance test,
depending on these already fixed $C_l$, such that
\begin{equation}\label{eq:urp-bad-count}
 \#\{g\in F:gx\in C_l\}\leq\theta_l|F|
 \quad(x\in X, 1\leq l\leq L)
\end{equation}
whenever $F$ passes that test.  Thus no exceptional-set tolerance is
chosen before the possibly large macro costs are known.

\paragraph{The strict tracial gap and the witness.}
Let $B_s$ be the sliced primary bases and arrange
\[
 \sum_s|F_s|\mu(B_s)>1-\tau
 \quad(\mu\in M_\Gamma(X)).
\]
For the positive elements encoding the primary packet bundles, their
target-minus-source dimension functions satisfy
\begin{align*}
 d_\mu(P_1)-d_\mu(P_0)
 &\geq\sum_s(R_s-pg_s)\mu(B_s)\\
 &>(r+G)(1-\tau)=r+\frac{3G}{4}>r.
\end{align*}
Cuntz comparison therefore gives one finite-propagation witness.  Its
finite support $K$ is fixed before the input window $F$ is chosen.

\paragraph{Input windows and source ranks.}
Choose $F$ after that witness.  In addition to
\eqref{eq:urp-bad-count} and \eqref{eq:urp-ow-reserve}, require that full
macro tiles contained in $F$ leave at most
$\delta_{\mathrm{out}}|F|$ uncovered sites and that
\begin{equation}\label{eq:urp-propagation-budget}
 \left|\left(\bigcup_sF_s\right)KF\right|
     \leq(1+\delta_{\mathrm{prop}})|F|.
\end{equation}
Let $q_l$ be the number of those macro tiles of type $l$, and set
$n_F=\sum_lq_ln_l$.  Their disjointness gives the useful explicit bounds
\begin{equation}\label{eq:urp-source-bounds}
 c_+(1-\delta_{\mathrm{out}})|F|
 \leq n_F
 \leq\left(c_++\frac{\kappa c}{4}\right)|F|
 \leq c_+\left(1+\frac{\kappa}{4}\right)|F|.
\end{equation}
The first term in \eqref{eq:urp-closed-budget}, together with
\eqref{eq:urp-bad-count}, ensures that the total number of slots in bad
macro tiles is at most
\[
 |F|\sum_l n_l\theta_l
 \leq\frac b4c_+(1-\delta_{\mathrm{out}})|F|
 \leq\frac b4 n_F\leq b n_F.
\]
The second term ensures that their additional join cost is at most
\begin{equation}\label{eq:urp-join-budget}
 |F|\sum_lB_l\theta_l\leq\frac{ac}{4096}|F|.
\end{equation}

\paragraph{A uniform strict target-rank margin.}
Disjointness of the active primary tiles and
\eqref{eq:urp-propagation-budget} bound the witness's output rank by
\[
 (2-10a)pc_+(1+\delta_{\mathrm{prop}})|F|.
\]
The fixed linear compression map and sparse exceptional map therefore
use total target rank at most
\[
 \left\lceil(2-10a)pc_+(1+\delta_{\mathrm{prop}})|F|\right\rceil
       +2p\lceil(a+b)n_F\rceil.
\]
Require also
\[
 |F|\geq
 \frac{2(1+2p)}{apc_+(1-\delta_{\mathrm{out}})}.
\]
After division by the source rank $pn_F$, the two ceiling errors are
then at most $a/2$.  The normalized target rank is consequently at most
\begin{align}\label{rank-control}
 &(2-10a)\frac{1+\delta_{\mathrm{prop}}}
                   {1-\delta_{\mathrm{out}}}
       +2(a+b)+\frac a2 \nonumber \\
 &\leq (2-10a)\left(1+\frac a{128}\right)
              +2a+\frac a{64}+\frac a2
 <2-7a<2.
\end{align}
The positive rank margin is independent of $F$ and of the
compression product exponent.

\paragraph{Covering-order saving with a fixed reserve.}
For a good macro tile, the uncompressed primary nerve factors and its
one-site hole factors cost at most
$(d+\varepsilon+M\eta)|E_l|$.  Each real boundary slot reduces this
carrier dimension by one.  A fallback macro nerve costs at most
$(d+\varepsilon)|E_l|$, so $B_l$ bounds the extra cost of the join at a
bad macro tile.  Cover uncovered input sites by one-site nerve factors.
If torus compression in $u$ copies supplies $(a/8)u n_F$ boundary slots,
discarding padding and all slots in bad macro tiles still leaves at least
\[
 (a/8-\kappa-b)u n_F
\]
real boundary slots in good macro tiles.  Hence the resulting cover
subordinate finite target complex has normalized dimension at most
\begin{align*}
 d+\varepsilon+M\eta+M\delta_{\mathrm{out}}
       +\frac{ac}{4096}
 -(a/8-\kappa-b)c_+(1-\delta_{\mathrm{out}}).
\end{align*}
The four positive error terms sum to at most $ac/1024$.  Furthermore
\[
 (a/8-\kappa-b)c_+(1-\delta_{\mathrm{out}})
 =\frac{7a}{64}c_+(1-\delta_{\mathrm{out}})
 >\frac{ac}{16}.
\]
Thus, for every permitted compression exponent $u$, the cover-rate upper
bound is strictly less than
\begin{equation}\label{eq:urp-final-reserve}
 d-\frac{63ac}{1024}<d-\Delta.
\end{equation}

\paragraph{The Ornstein--Weiss lower bound for every product exponent.}
For the left action used here, window translates are right translates.
Choose $c_1,\ldots,c_u$ such that the windows $Fc_j$ are pairwise disjoint,
and put $F_u=\bigcup_jFc_j$.  Every fixed left-invariance test passed by $F$ is also passed by $F_u$. In particular, for a finite test set $Q$ containing $e$,
\[
 |QF_u\setminus F_u|\leq u|QF\setminus F|.
\]
For each individual test element $k$,
\[
 |kF_u\mathbin{\triangle}F_u|
 \leq\sum_j|kFc_j\mathbin{\triangle}Fc_j|
 =u|kF\mathbin{\triangle}F|.
\]
The same observation applies to finite test sets.  Therefore
\eqref{eq:urp-ow-reserve} yields
\[
 \frac{\DD(\mathcal U^{F_u})}{|F_u|}>d-\frac\Delta2
 \qquad\hbox{for every }u\geq1,
\]
contradicting the subordinate-cover bound
\eqref{eq:urp-final-reserve}.  The final invariance and cardinality
requirements on $F$ are all imposed after the finite witness support is
known; increasing the invariance or cardinality of $F$ does not alter any
previously fixed constants, masks, packet maps, or ranks.

\subsection{Continuous slots and the bundle embedding}
\label{sec:compression-application}
We now construct the maps realizing the preceding budgets. Use the primary colored nerves $\varphi_s$, packet maps $F_{s,j}$, one-site nerve $\varphi_0$, and fallback nerves $\varphi_l^{\mathrm{fb}}$ fixed at the indicated stages above. Their dimensions are bounded by $m_s$, $m_0$, and $m(E_l)$, respectively.

Use the single allocation already chosen above from Proposition~\ref{prop:finite-shape-allocation}, with $\alpha=\tau$ and the fixed $\theta_l$ of \eqref{eq:urp-closed-budget}. Its bases $B_s$, compact sets $K_s$, pattern regions $O_{l,\lambda}$, and masks $w_l$ remain fixed. The positive functions and Fourier witness support $K$ are those already obtained from Proposition~\ref{prop:allocated-bundle-embedding} using the strict tracial gap. No allocation or witness is chosen anew.

For each macro type $l$ and pattern $\lambda$, order the pairs $(s,q)$ of its primary tiles, and then their $g_s$ packets. This orders its $h_{l,\lambda}$ real slots. Complete the list to $n_l$ slots using a fixed point $b_*\in\partial C_t$. Let
\[
 D_{l,\lambda}=\supp(w_l)\cap O_{l,\lambda}.
\]
For fixed $l$, these form finitely many disjoint compact sets. On $D_{l,\lambda}$ prescribe slot $\nu$ to have value $F_{s,j}(qx)$ when its label is the real triple $(s,q,j)$, and value $b_*$ when it is padded. Coordinatewise Tietze extension produces continuous maps
\[
 f_{l,\nu}:X\longrightarrow C_t,\qquad 1\leq\nu\leq n_l,
\]
with exactly these values on all $D_{l,\lambda}$. Thus on the entire positive-mask region the prescribed real packet values and padding values agree with the continuous slot functions.

Give slot $\nu$ the effective weight
\[
 w_{l,\nu}(x)=\begin{cases}
 w_l(x),&w_l(x)>0\text{ and its pattern label is real},\\
 0,&\text{otherwise}.
 \end{cases}
\]
It is continuous: its real-slot indicator is locally constant wherever $w_l>0$, and the product vanishes at the remaining transitions. A positive effective weight has a locally constant real label $(s,q,j)$ and $qx\in K_s$.

Let $\mathcal D_l(F)$ be the centers of macro tiles $E_lc\subset F$ in the fixed exact macro tiling. The retained tiles cover $F$ apart from a set $J_F$ of at most $\delta_{\mathrm{out}}|F|$ sites. This follows by asking that the inner collar of $F$ for $\bigcup_lE_lE_l^{-1}$ be that small: every tile through a core point is wholly contained in $F$. Such an invariance condition can be imposed after all preceding choices. Concatenate
\[
 f_{l,\nu}(cx),\quad
 l\leq L,\ c\in\mathcal D_l(F),\ \nu\leq n_l,
\]
in a fixed order, obtaining $f_F:X\to C_t^{n_F}$. The effective weights are $w_{l,\nu}(cx)$.

If a real label is $(s,q,j)$, its allocation label in Proposition~\ref{prop:allocated-bundle-embedding} is $(s,qc,j)$ and its input group is $(qc,s,j)$. Since $e\in F_s$, its center $q$ belongs to $E_l$, so $qc\in F$. The real primary tiles within a macro are disjoint; different retained macros are disjoint. Hence all positive effective weights have distinct input groups. Agreement with the original packet maps and compact containment follow from the allocation lemma.

On $\mathcal T(f_F,Z_{an_F})$, a padded slot in a macro with $w_l=1$ has value $b_*$, and therefore cannot have sparse parameter zero: $b_*\notin L$. Every zero-parameter slot with effective weight less than one is consequently in a macro with $w_l<1$. Their total slot count is at most $bn_F$, by \eqref{eq:urp-bad-count} and \eqref{eq:urp-source-bounds}. This verifies the weaker exception hypothesis \eqref{eq:allocated-slot-exception-count-on-locus} with $\beta=b$.

\paragraph{Application of \cite[Theorem 1.3]{OAI-top}.}
We now identify explicitly every datum in the cited theorem. Take
\[
 Y=X,\qquad n=n_F,\qquad f=f_F,
 \qquad Z=Z_{an_F}(t,n_F),
\]
with the integers $p,t=2p+1$ and the constant $0<a<1/16$ already fixed. The geometric data $L,\mathcal H$ are those specified in Section~\ref{sec:torus-compression}. The base and source bundle are exactly
\[
 \mathcal T_F=\mathcal T(f_F,Z_{an_F}),
 \qquad \mathcal E_{f_F}=d_{f_F}^*\mathcal H_{n_F}=\mathcal E_F,
\]
where $\mathcal E_F$ is the bundle in Proposition~\ref{prop:allocated-bundle-embedding}. In particular its fiber at $(x,z)\in\mathcal T_F$ is
\[
 (\mathcal E_{f_F})_{(x,z)}
 =\bigoplus_{\nu=1}^{n_F}
       \mathcal H_{(f_F)_\nu(x)-z_\nu},
 \qquad \rank\mathcal E_{f_F}=pn_F.
\]
The locus is compact, since $X$ and $Z_{an_F}(t,n_F)$ are compact and $L^{n_F}$ is closed. No dimension bound on $X$ is required.

For the target rank choose
\[
 q_F=\left\lceil(2-10a)pc_+(1+\delta_{\mathrm{prop}})|F|\right\rceil,
 \qquad k_F=q_F+2p\lceil(a+b)n_F\rceil.
\]
The output packing estimate gives \eqref{eq:allocated-output-rank-count} with this $q_F$. All other allocation hypotheses were verified above, including the torus-locus exception bound with $\beta=b$. Proposition~\ref{prop:allocated-bundle-embedding} therefore supplies the single map
\[
 \Phi_1\oplus\Phi_2:
 \mathcal E_{f_F}\longrightarrow\mathcal T_F\times\C^{k_F}.
\]
It is fiberwise injective and continuous on the whole locus, including changes of the support of $z$ and of the active allocation labels. The rank estimate of Section~\ref{sec:actions-parameters} (\eqref{rank-control}) gives $k_F/(pn_F)<2-7a<2$. Thus the nonnegative integer called $K$ in \cite[Theorem 1.3]{OAI-top} is $k_F<2pn_F$ here; it is unrelated to our propagation set $K\subset\Gamma$.

Every hypothesis of \cite[Theorem 1.3]{OAI-top}, restated as Theorem~\ref{input:compression}, is now satisfied. Its conclusion yields an integer $u\geq1$ and
\[
 \mathcal G:X^u\longrightarrow C_t^{un_F}.
\]
For every $\boldsymbol x=(x_1,\ldots,x_u)\in X^u$, at least $\lceil(a/8)un_F\rceil$ slots of $\mathcal G(\boldsymbol x)$ lie on $\partial C_t$. Writing $\mathcal G_{b,\nu}$ for copy $b$ and slot $\nu$, the preservation conclusion is
\[
 (f_F)_\nu(x_b)\in\partial C_t
 \quad\Longrightarrow\quad
 \mathcal G_{b,\nu}(\boldsymbol x)=(f_F)_\nu(x_b).
\]
This is equality of the entire $t$-dimensional vector. The theorem permits $u$ arbitrarily large; our proof uses only the existence of one such exponent.

\subsection{Reconstruction of the whole orbit cover}
Choose $c_1,\ldots,c_u\in\Gamma$ such that $Fc_b$ are pairwise disjoint, and put $F_u=\bigsqcup_bFc_b$. Such choices exist because $\Gamma$ is infinite: at each stage only finitely many new translates are forbidden. Pull $\mathcal G$ back along
\[
 \iota_u(x)=(c_1x,\ldots,c_ux).
\]
Write its replacement slots as $v_{b,l,c,\nu}(x)$. In a macro with positive weight, a real slot $\nu$ labeled $(s,q,j)$ has original value $F_{s,j}(qcc_bx)$. The preservation conclusion just stated therefore gives
\[
 F_{s,j}(qcc_bx)\in\partial C_t
 \quad\Longrightarrow\quad
 v_{b,l,c,\nu}(x)=F_{s,j}(qcc_bx).
\]
This supplies exactly the boundary-preservation hypothesis of Lemma~\ref{lem:packets} for each primary nerve factor on its open pattern region. The boundary-slot lower bound also holds at every $\iota_u(x)$, and will supply the carrier-dimension saving below.

For each macro pattern $\lambda$ define the finite polyhedron
\[
 A_{l,\lambda}=
 \prod_{(s,q)\in\mathcal P_{l,\lambda}}S_s
 \ \times\prod_{q\in J_{l,\lambda}}S_0,
 \qquad A_l=\bigsqcup_{\lambda\in\Lambda_l}A_{l,\lambda}.
\]
The primary coordinates before reconstruction are $\varphi_s(qcc_bx)$; the hole coordinates are $\varphi_0(qcc_bx)$. Reconstruct each primary factor using its assigned replacement packets. The resulting point in $A_{l,\lambda}$ is continuous on its positive-mask pattern region, and every positive vertex coordinate still implies membership in the corresponding original orbit-cover member.

Join this pattern point to the unchanged fallback point $\varphi_l^{\mathrm{fb}}(cc_bx)$ in
\[
 P_l=A_l*S_l^{\mathrm{fb}},
\]
giving the $A_l$ branch weight $w_l(cc_bx)$ and the fallback branch weight $1-w_l(cc_bx)$. This defines a continuous map into $P_l$ even where the pattern changes: the bounded coordinates of the entire $A_l$ branch are multiplied by a weight tending to zero. At weight one the fallback branch disappears. Include the unchanged one-site factors $\varphi_0(qc_bx)$ for $q\in J_F$. Taking the product over all $b,l,c$ and these outside sites defines a continuous map
\[
 \Phi:X\longrightarrow P_u,
\]
where $P_u$ is a finite polyhedron.

We verify subordination on the whole inverse image of each target
neighborhood. At any point of $\Phi(X)$, choose in each macro factor a branch of positive weight. If it is the pattern branch, its component $\lambda$ is locally fixed; choose a positive vertex in each primary and hole factor. The primary cover members describe all sites of $F_sq$, and the hole factors describe the complementary sites, together covering $E_l$ exactly. Preservation of vertex support ensures the same memberships throughout the inverse image of a neighborhood on which those vertices remain positive. If the fallback branch is chosen, a positive vertex in its unchanged nerve specifies a member of $\UU^{E_l}$ on that whole inverse image. Make the analogous choices for the outside one-site factors. The finitely many assigned members combine to one member of $\UU^{F_u}$. Intersecting the positive-coordinate neighborhoods proves the hypothesis of Lemma~\ref{lem:refinement}.

For completeness, the dimension estimate is an estimate of a fixed finite target subcomplex. Equip $P_u$ with the cell structure formed from products and joins of simplex faces in its factors. For a macro of type $l$ with weight one, its carrier dimension is at most
\[
 \sum_{(s,q)\in\mathcal P_{l,\lambda}}m_s
       +m_0|J_{l,\lambda}|-s_{b,l,c}(x),
\]
where $s_{b,l,c}(x)$ counts its real boundary replacement packets. The expression before the saving is at most $(d+\varepsilon+M\eta)|E_l|$. For a macro with weight less than one, the product-join carrier bound is at most the baseline plus $m(E_l)+1$, which is at most the baseline plus $B_l$. This bound also covers weight zero, when only the fallback branch remains.

At most $\kappa un_F$ boundary slots can be padded slots in good macros, and at most $bun_F$ can be in bad macros. The remaining real boundary slots in good macros total at least
\[
 (a/8-\kappa-b)un_F.
\]
The estimates \eqref{eq:urp-join-budget} and \eqref{eq:urp-source-bounds}, together with the outside-site cost $M\delta_{\mathrm{out}}u|F|$, therefore bound every image carrier by
\begin{align*}
 u|F|\bigl(d+\varepsilon+M\eta+M\delta_{\mathrm{out}}
       +ac/4096\bigr)
 -(a/8-\kappa-b)un_F
 <u|F|(d-\Delta),
\end{align*}
as computed in \eqref{eq:urp-final-reserve}. There are finitely many cells, and the faces of a cell never have larger dimension. Thus $\Phi(X)$ lies in the union of all cells of dimension at most $\lfloor u|F|(d-\Delta)\rfloor$, a finite subcomplex of that dimension. Triangulate this subcomplex without raising its dimension. Lemma~\ref{lem:refinement} yields
\[
 \frac{\DD(\UU^{F_u})}{|F_u|}\leq d-\Delta.
\]
On the other hand, $F_u$ passes the Ornstein--Weiss lower test because $F$ does, giving \eqref{eq:urp-ow-reserve} for $F_u$. This is a contradiction.
\end{proof}

\subsection{The lower bound and equality}
\begin{proof}[Proof of Theorem~\ref{thm:actions-main}]
Theorem~\ref{thm:cover-bound} proves $d_\UU\leq2r$ for every finite comparison bound $r$ and every finite open cover $\UU$. Taking the supremum over covers gives $\mdim(X,\Gamma)\leq2r$. If comparison bounds exist, taking their infimum proves the first assertion of Theorem~\ref{thm:actions-main}; if none exist, its right-hand side is infinite and the assertion is automatic. In particular, infinite mean dimension excludes every finite comparison bound. Under COS, the upper bound in  \cite[Theorem 4.8]{Niu} completes the equality in the extended nonnegative reals.
\end{proof}

\begin{remark}[The hypotheses used]
Exact group tilings and URP provide allocation, Cuntz comparison
supplies the finite-propagation witness, and
Theorem~\ref{input:compression} supplies boundary-preserving compression.
COS enters through the established upper bound. The theorem retains
URP as a hypothesis for its lower bound and URP together with COS
for equality.
\end{remark}

\begin{remark}[Order of choices]
First fix $p,a$ and the numerical margins. Then choose the primary
tiling and packet maps, followed by the macro tiling and slot counts.
Next choose the small-error URP allocation and masks, and fix one Fourier
witness. Finally choose the window $F$ and then the exponent $u$.
The closed-set budgets are chosen after all macro costs are known, and the final window after propagation is known. Neither choice changes earlier data. Right-translated product windows preserve all fixed left-invariance tests, so there is no restriction on the exponent furnished by compression.
\end{remark}

\end{document}